\numberwithin{equation}{section}
\newcommand{\st}{\h:\h}
\newcommand{\h}{\hspace{1mm}}
\newcommand{\hh}{\hspace{5mm}}
\newcommand{\qqand}{\qquad \text{and} \qquad}
\theoremstyle{plain}
\newtheorem{lemma}{Lemma}[section]
\newtheorem{corollary}[lemma]{Corollary}
\newtheorem{proposition}[lemma]{Proposition}
\newtheorem{remark}[lemma]{Remark}
\newtheorem{theorem}[lemma]{Theorem}
\newtheoremstyle{named}{}{}{\itshape}{}{\bfseries}{.}{.5em}{\thmname{#1}\thmnumber{ #2}. (\thmnote{#3})}
\theoremstyle{named}
\newtheorem*{namedTheorem*}{Theorem}
\def\be{\begin{eqnarray}}
\def\ee{\end{eqnarray}}
\def\beal{\begin{aligned}}
\def\enal{\end{aligned}}
\newcommand{\norm}[1]{\left\lVert#1\right\rVert}
\newcommand{\vabs}[1]{\left| #1 \right|}
\newcommand{\vabss}[1]{| #1 |}
\newcommand{\paren}[1]{\left(#1\right)}
\newcommand{\claus}[1]{\left\{#1\right\}}
\newcommand{\boxClaus}[1]{\left[#1\right]}
\newcommand{\tl}{\tilde}
\newcommand{\wt}{\widetilde}
\renewcommand{\Re}{\mathrm{Re\, }}
\renewcommand{\Im}{\mathrm{Im\,}}
\renewcommand{\arg}{\mathrm{arg\,}}
\newcommand{\reals}{\mathbb{R}}
\newcommand{\complexs}{\mathbb{C}}
\newcommand{\torus}{\mathbb{T}}
\newcommand{\al}{\alpha}
\newcommand{\tht}{\theta}
\newcommand{\phiA}{\varphi}
\newcommand{\AAA}{\mathcal{A}}
\newcommand{\BB}{\mathcal{B}}
\newcommand{\FF}{\mathcal{F}}
\newcommand{\GG}{\mathcal{G}}
\newcommand{\HH}{\mathcal{H}}
\newcommand{\II}{\mathcal{I}}
\newcommand{\JJ}{\mathcal{J}}
\newcommand{\KK}{\mathcal{K}}
\newcommand{\LL}{\mathcal{L}}
\newcommand{\OO}{\mathcal{O}}
\newcommand{\RRR}{\mathcal{R}}
\definecolor{myGreen}{RGB}{0, 200, 0}
\definecolor{myOrange}{RGB}{255, 100, 0}
\definecolor{myYellow}{RGB}{255, 200, 0}
\definecolor{myBlue}{RGB}{0, 200, 255}
\definecolor{myPurple}{RGB}{200, 0, 200}
\newcommand{\kInn}{\kappa}
\newcommand{\kInnS}{\kappa^*}
\newcommand{\CInn}{\Theta}
\newcommand{\CInnHat}{\widetilde{\Theta}}
\newcommand{\inn}{\mathrm{in}}
\newcommand{\unstable}{{\mathrm{u}}}
\newcommand{\stable}{{\mathrm{s}}}
\newcommand{\DuInn}{\mathcal{D}^{\mathrm{u}}_{\kInn}}
\newcommand{\DuInnInf}{\mathcal{D}^{\mathrm{u}}_{\kInn,\eta}}
\newcommand{\DsInn}{\mathcal{D}^{\mathrm{s}}_{\kInn}}
\newcommand{\DsInnInf}{\mathcal{D}^{\mathrm{s}}_{\kInn,\eta}}
\newcommand{\DdInn}{\mathcal{D}^{\diamond}_{\kInn}}
\newcommand{\EInn}{\mathcal{E}_{\kInn}}
\newcommand{\rectangle}{\mathbf{R}_{\kappa}(\varrho_1,\varrho_2)}
\newcommand{\Zu}{Z^{\mathrm{u}}}
\newcommand{\ZuHat}{\widetilde{Z}^{\mathrm{u}}}
\newcommand{\Zs}{Z^{\mathrm{s}}}
\newcommand{\Zd}{Z^{\diamond}}
\newcommand{\Wu}{W^{\mathrm{u}}}
\newcommand{\WuHat}{\widetilde{W}^{\mathrm{u}}}
\newcommand{\Wd}{W^{\diamond}}
\newcommand{\Xu}{X^{\mathrm{u}}}
\newcommand{\XuHat}{\widetilde{X}^{\mathrm{u}}}
\newcommand{\Xd}{X^{\diamond}}
\newcommand{\Yu}{Y^{\mathrm{u}}}
\newcommand{\YuHat}{\widetilde{Y}^{\mathrm{u}}}
\newcommand{\Yd}{Y^{\diamond}}
\newcommand{\DZ}{\Delta Z}
\newcommand{\DZo}{\Delta Z_{0}}
\newcommand{\DW}{\Delta W}
\newcommand{\DX}{\Delta X}
\newcommand{\DY}{\Delta Y}
\newcommand{\DYo}{\Delta Y_{0}}
\newcommand{\XcalU}{\mathcal{X}^{\mathrm{u}}}
\newcommand{\Ycal}{\mathcal{Y}}
\newcommand{\Zcal}{\mathcal{Z}}
\newcommand{\XcalUTotal}{\mathcal{X}^{\mathrm{u}}_{\times}}
\newcommand{\normInn}[1]{\lVert#1\rVert}
\newcommand{\normInnU}[1]{\lVert#1\rVert^{\mathrm{u}}}
\newcommand{\normInnTotal}[1]{\lVert#1\rVert_{\times}}
\newcommand{\normInnTotalSmall}[1]{\lVert#1\rVert_{\times}}
\newcommand{\normInnDiff}[1]{\lVert#1\rVert}
\newcommand{\normInnDiffSmall}[1]{\lVert#1\lVert}
\newcommand{\normInnDiffExp}[1]{\llbracket#1\rrbracket}
\newcommand{\cttInnExistA}{b_1}
\newcommand{\cttInnExistB}{b_2}
\newcommand{\cttInnDiff}{b_3}
\title{Breakdown of homoclinic orbits to $L_3$: Nonvanishing of the Stokes constant} 
\date{\today}
\author[1,5]{Inmaculada Baldom\'a}
\author[2]{Maciej J. Capi\'nski}
\author[3]{Mar Giralt\thanks{Corresponding author.\\
		\textbf{E-mail adresses:} \href{mailto:immaculada.baldoma@upc.edu}{immaculada.baldoma@upc.edu} (I. Baldom\'a), 
		\href{mailto:mcapinsk@agh.edu.pl}{mcapinsk@agh.edu.pl} (M. Capi\'nski)
		\href{mailto:mar.giralt@obspm.fr}{mar.giralt@obspm.fr} (M. Giralt),
		\href{mailto:guardia@ub.edu}{guardia@ub.edu} (M. Guardia). \\
		\textbf{Keywords:} Hamiltonian systems, Exponentially small phenomena, Splitting of separatrices, Celestial mechanics, L3 Lagrange point, computer assisted proof.
		\textbf{MSC classes:} 37J46, 37N05.}}
\author[4,5]{Marcel Guardia}
\affil[1]{Departament de Matem\`atiques \& IMTECH, Universitat Polit\`ecnica de Catalunya, Diagonal 647, 08028 Barcelona, Spain}
\affil[2]{Faculty of Applied Mathematics, AGH University of Krak\'ow, al. Mickiewicza 30, 30-059 Krak\'ow, Poland}
\affil[3]{IMCCE, CNRS, Observatoire de Paris, Universit\'e PSL, Sorbonne Universit\'e, 77 Avenue Denfert-Rochereau, 75014 Paris, France}
\affil[4]{Departament de Matem\`atiques i Inform\`atica, Universitat de Barcelona, Gran Via, 585, 08007 Barcelona, Spain}
\affil[5]{Centre de Recerca Matem\`atica, Campus de Bellaterra, Edifici C, 08193 Barcelona, Spain}
\begin{document}

\maketitle

\begin{abstract}
The Restricted Planar Circular 3-Body Problem models the motion of a body of negligible mass
under the gravitational influence of two massive bodies, called the primaries, which perform circular orbits coplanar with that of the massless body.  In
rotating coordinates, it can be modelled by a two degrees of freedom Hamiltonian system,
which has five critical points called the Lagrange points. Among them, the point $L_3$ is a saddle-center  which is collinear with the
primaries and beyond the largest of the two. The papers \cite{articleInner,articleOuter} provide an asymptotic formula for the distance between the one dimensional stable and unstable manifolds of $L_3$ in a transverse section for small
values of the mass ratio $0 < \mu\ll 1$. This distance is exponentially small with respect to $\mu$ and its first order depends on what is usually called a Stokes constant.  The non-vanishing of this constant implies that the distance between the invariant manifolds at the section is not zero.
In this paper, we prove that the Stokes constant is non-zero. The proof is computer assisted.
\end{abstract}

\tableofcontents

\section{Introduction and main result}\label{sec:intro}

The Restricted Circular $3$-Body Problem 
models the motion of a body of negligible mass under the gravitational influence of two massive bodies, called the primaries, which perform a circular motion.
If one also assumes that the massless body moves on the same plane as the primaries one has the Restricted Planar Circular $3$-Body Problem (RPC$3$BP).

Let us name the two primaries $S$ (star) and $P$ (planet) and normalize their masses so that $m_S=1-\mu$ and $m_P=\mu$, with $\mu \in \left( 0, \frac{1}{2} \right]$. 
In a rotating coordinate system, the positions of the primaries can be fixed at $q_S=(\mu,0)$ and  $q_P=(\mu-1, 0)$. 
Then, the position and momenta of the third body, $(q,p) \in \reals^2 \times \reals^2$, are governed by the Hamiltonian system associated to the autonomous Hamiltonian
\begin{equation}\label{def:hamiltonianInitialNotSplit} 	\begin{split}
		H(q,p;\mu) &= \frac{||p||^2}{2} 
		- q^t \left( \begin{matrix} 0 & 1 \\ -1 & 0 \end{matrix} \right) p 
		-\frac{(1-\mu)}{||q-(\mu,0)||} 
		- \frac{\mu}{||q-(\mu-1,0)||}.
	\end{split}
\end{equation}
For $\mu>0$, it is a well known fact that \eqref{def:hamiltonianInitialNotSplit}  has five critical points, usually called Lagrange points
(see Figure~\ref{fig:L3Outer}). 
The three collinear Lagrange points, $L_1$, $L_2$ and $L_3$, are of center-saddle type whereas, for small $\mu$, the triangular ones, $L_4$ and $L_5$, are of center-center type  (see, for instance, \cite{Szebehely}).

\begin{figure}[H]
\centering
\begin{overpic}[scale=0.5]{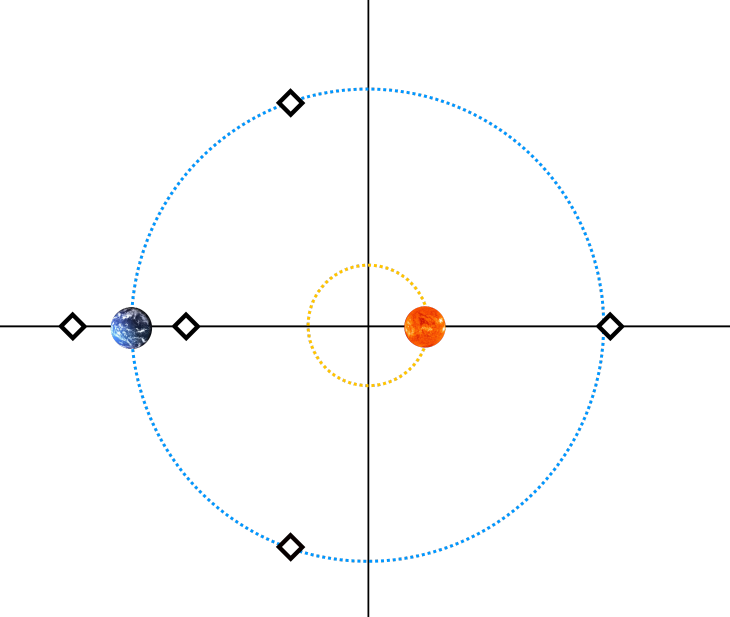}
	\put(60,33){{\color{orange} $S$ }}
	\put(11,33){{\color{blue} $P$ }}
	\put(23,45){{\color{black} $L_1$ }}
	\put(7,45){{\color{black} $L_2$ }}
	\put(87,42){{\color{black} $L_3$ }}
	\put(37,75){{\color{black} $L_5$ }}	
	\put(37,2){{\color{black} $L_4$ }}
\end{overpic}
\caption{Projection onto the $q$-plane of the Lagrange equilibrium points for the RPC$3$BP on rotating coordinates.}
	\label{fig:L3Outer}
\end{figure}

The invariant manifolds of the (unstable) Lagrange points are of fundamental im\-por\-tance for understanding the dynamics of the RPC3BP. In particular, those of the point $L_3$ (more precisely its center-stable and center-unstable invariant manifolds) act as boundaries of \emph{effective stability} of the stability domains around $L_4$ and $L_5$ 
(see \cite{GJMS01v4, SSST13}). They also allow to create transfer orbits from the small primary to $L_3$ in the RPC$3$BP (see \cite{HTL07, TFRPGM10})  or between primaries in the Bicircular 4-Body Problem (see \cite{JorNic20, JorNic21}).

In understanding how the invariant manifolds of $L_3$ structure the global dynamics, it is fundamental to know whether they coincide or not. The purpose of the papers \cite{articleInner,articleOuter}  and the present one is to prove that these invariant manifolds do not coincide the first time they hit a suitable transverse section. This is the content of the Theorems \ref{TheoremA} and \ref{thm:Stokes} below. This fact has several dynamical implications, which are proven in the paper  \cite{articleChaos} and are explained in Remarks \ref{rmk:implications} and \ref{rmk:chaos} below.

The manifolds $W^{\unstable}(L_3)$ and $W^{\stable}(L_3)$ lie in the so called $1:1$ mean motion resonance and have two branches each. One pair, which we denote by $W^{\unstable,+}(L_3)$ and $W^{\stable,+}(L_3)$, circumvents $L_5$ whereas the other circumvents $L_4$ and it is denoted as  $W^{\unstable,-}(L_3)$ and $W^{\stable,-}(L_3)$, see Figure~\ref{fig:perturbedInvariantManifolds1d}.
These branches are symmetric with respect to 
\begin{equation*}
	\Psi(q,p)=(q_1,-q_2,-p_1,p_2).
\end{equation*}
%
Thus, to compute the distance between the manifolds, one can restrict the study to the first ones, $W^{\unstable,+}(L_3)$ and $W^{\stable,+}(L_3)$.
We measure this distance in symplectic polar  coordinates, defined as
\begin{align*}
	q= 
	r \begin{pmatrix}
		\cos \tht \\ 
		\sin \tht
	\end{pmatrix},
	\qquad
	p = 
	R
	\begin{pmatrix}
		\cos \tht \\ 
		\sin \tht
	\end{pmatrix} 
	- \frac{G}{r} \begin{pmatrix}
		\sin \tht \\ 
		-\cos \tht
	\end{pmatrix},
\end{align*} 
where 
$r$ is the distance of the third body from the origin, $\tht$ its argument, 
$R$ is the radial linear momentum  and $G$ is the angular momentum.

\begin{figure}[H]
	\centering
	\begin{overpic}[scale=0.3]{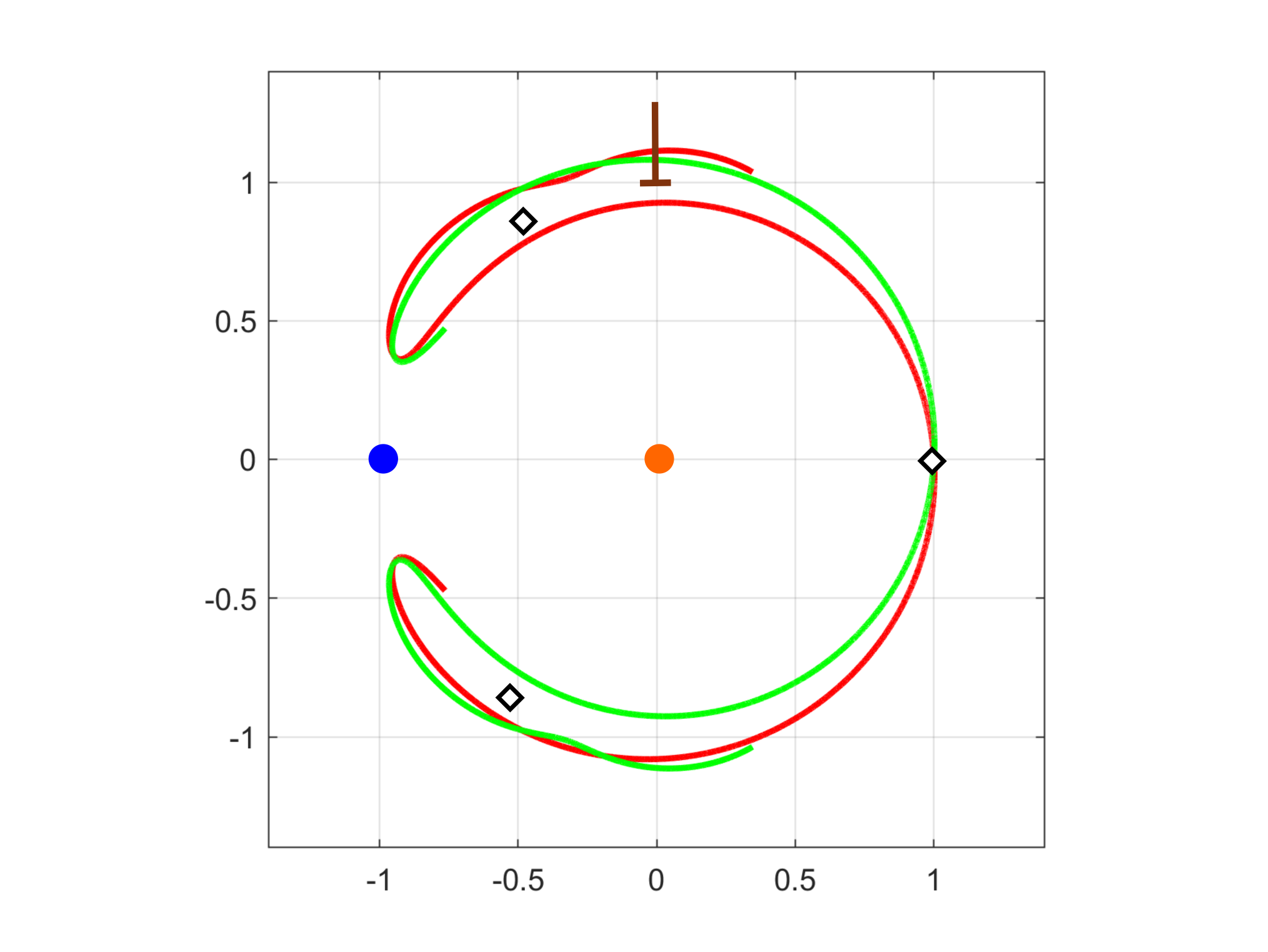}
		\put(51,34){\color{orange} $S$ }
		\put(28,34){\color{blue} $P$ }
		\put(75,38){{\color{black} $L_3$ }}
		\put(38,62){{\color{black} $L_5$ }}	
		\put(38,13){{\color{black} $L_4$ }}
		\put(53,64.5){{\color{brown} $\Sigma$ }}
		\put(65,60){\color{myGreen} $W^{\stable,+}(L_3)$}
		\put(50,50){\color{red} $W^{\unstable,+}(L_3)$}
		\put(50,25){\color{myGreen} $W^{\stable,-}(L_3)$}
		\put(65,13){\color{red} $W^{\unstable,-}(L_3)$}
	\end{overpic}
	\caption{Projection onto the $q$-plane of the unstable (red) and stable (green) manifolds of $L_3$, for ${\mu=0.0028}$. }
	\label{fig:perturbedInvariantManifolds1d}
\end{figure}

We consider as well the $3$-dimensional section 

\begin{equation*}
\Sigma = \claus{(r,\tht,R,G) \in \reals \times \torus \times \reals^2 
	\st r>1, \, \tht=\frac{\pi}2 \,}
\end{equation*}

and denote by $(r^{\unstable}_*,\frac{\pi}2, R^{\unstable}_*,G^{\unstable}_*)$ and $(r^{\stable}_*,\frac{\pi}2,R^{\stable}_*,G^{\stable}_*)$ the first crossing of the invariant manifolds with this section (see Figure \ref{fig:perturbedInvariantManifolds1d}). 
The next theorem, proven in \cite{articleInner, articleOuter}, measures the distance between these points for $0< \mu\ll 1$.

\begin{theorem}	\label{TheoremA}
	There exists $\mu_0>0$ such that, for $\mu \in (0,\mu_0)$,

	\[	\norm{(r^{\unstable}_*,R^{\unstable}_*,G^{\unstable}_*)-(r^{\stable}_*,R^{\stable}_*,G^{\stable}_*)}
	=
	\sqrt[3]{4} \,
	\mu^{\frac13} e^{-\frac{A}{\sqrt{\mu}}} 
	\boxClaus{\vabs{\CInn}+\OO\paren{\frac1{\vabs{\log \mu}}}},
	\]
	where
	\begin{itemize}
		\item The constant $A>0$ is given by the real-valued integral
		\begin{equation}\label{def:integralA}
			A= \int_0^{\frac{\sqrt{2}-1}{2}} \frac{2}{1-x}\sqrt\frac{x}{3(x+1)(1-4x-4x^2)}  dx\approx 0.177744.
		\end{equation}
		\item The constant $\CInn \in \complexs$ is the Stokes constant associated to the inner equation analyzed in \cite[Theorem 2.7]{articleInner} (see also Theorem~\ref{theorem:mainAnalytic}). 
	\end{itemize}
\end{theorem}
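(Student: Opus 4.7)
The plan is to carry out an exponentially small splitting of separatrices analysis at the saddle-center $L_3$, for which the natural small parameter is $\sqrt{\mu}$, the modulus of the hyperbolic eigenvalues at the equilibrium (up to bounded factors). For $\mu = 0$ the system reduces to the rotating Kepler problem, in which $L_3$ becomes degenerate but the ``separatrix'' can be identified with a specific $1{:}1$ resonant Kepler orbit, explicitly computable. The constant $A$ of \eqref{def:integralA} should then arise as the integral along this unperturbed separatrix of a suitable time reparametrization, extended to the imaginary direction in complex time; the explicit formula is obtained by substituting the Kepler parametrization and reducing the integrand to a one-variable quadrature.

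The first step of the proof is to parametrize $W^{\unstable,+}(L_3)$ and $W^{\stable,+}(L_3)$ as solutions of the respective invariance equations, written in suitable complex-time variables. A fixed-point argument in a Banach space of bounded analytic functions provides these parametrizations on complex strips slightly narrower than the distance $A/\sqrt{\mu}$ to the first complex-time singularity; concretely, on domains of the form $|\Im s| \leq A/\sqrt{\mu} - \kappa \, |\log \mu|$ for some $\kappa>0$. This is the outer analysis, and at the section $\Sigma$ it yields expressions for each manifold modulo errors that, while algebraically small in $\mu$, remain far larger than the exponentially small splitting one is chasing.

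The heart of the argument is the inner analysis. Zooming in around the singularity with the natural scaling dictated by $\sqrt{\mu}$, one derives an \emph{inner equation} independent of $\mu$; this is the equation whose stable/unstable solutions are compared in \cite[Theorem 2.7]{articleInner}. Their difference is exponentially small, and its leading Fourier coefficient is precisely the Stokes constant $\CInn$. Matching these inner solutions with the outer parametrizations on overlapping matching regions then transfers the exponentially small inner difference back to a difference on $\Sigma$, producing the leading factor $|\CInn|\, e^{-A/\sqrt{\mu}}$. The prefactor $\sqrt[3]{4}\,\mu^{1/3}$ records the Jacobian of the rescalings relating the inner variables to the symplectic polar coordinates $(r,R,G)$ at $\tht = \pi/2$, while the $\OO(1/|\log \mu|)$ remainder reflects the finite width of the matching region.

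The main obstacle is precisely this inner--outer matching: one must extend the parametrizations of the invariant manifolds analytically up to exponentially small distances from the singularity and show, uniformly in $\mu$, that the inner equation is the correct limit. A naive Melnikov computation only yields the correct order of magnitude because the $\mu=0$ limit is singular at $L_3$; capturing the precise prefactor $|\CInn|$ requires the full inner-equation machinery. This is also why the separate Theorem \ref{thm:Stokes} on the nonvanishing of $\CInn$ is needed to conclude that the distance on $\Sigma$ is actually nonzero.
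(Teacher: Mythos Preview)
Your outline matches the paper's own summary of the proof (carried out in full in \cite{articleInner,articleOuter}): the Lazutkin-type scheme of separatrix complex-singularity analysis, inner equation derivation, outer parametrization, and complex matching is exactly Steps A--G of Section~\ref{sec:inner}. The one point worth sharpening is Step~A: the separatrix at $\mu=0$ only exists after a singular change to scaled Poincar\'e planar elements, since in the original coordinates $L_3$ degenerates into an entire circle of equilibria, so this preliminary coordinate change is structurally essential rather than cosmetic and is what produces the slow--fast Hamiltonian to which the rest of your sketch applies.
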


Theorem \ref{TheoremA} provides a first order for the distance between the invariant manifolds of $L_3$, at the first crossing with $\Sigma$, provided the Stokes constant $\Theta$ is not zero. The main result of the present paper is the following.  

\begin{theorem}\label{thm:Stokes}
The constant $\Theta\in\mathbb{C}$ introduced in Theorem \ref{TheoremA}     satisfies
\[
\Theta\neq 0.
\]
\end{theorem}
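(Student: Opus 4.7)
The plan is to prove $\Theta\neq 0$ by a rigorous computer-assisted enclosure in interval arithmetic. First, I would recall from \cite{articleInner} the precise definition of $\CInn$: it is attached to the inner equation obtained after blowing up the slow–fast singularity associated with $L_3$, i.e.\ a non-autonomous first-order ODE for a complex variable $Z(U)$ in the complex plane. This equation admits two distinguished solutions $Z^{\unstable}(U)$ and $Z^{\stable}(U)$, characterized by prescribed asymptotic behaviour as $\mathrm{Re}(U)\to -\infty$ and $\mathrm{Re}(U)\to +\infty$ respectively inside appropriate complex sectors. Both admit a common divergent Gevrey-1 asymptotic expansion, hence their difference $\Delta Z(U):=Z^{\unstable}(U)-Z^{\stable}(U)$ is exponentially small, and $\CInn$ is extracted as the (complex) coefficient of the dominant exponential mode, after dividing $\Delta Z(U)$ by the explicit leading factor predicted by resurgent analysis.

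Second, I would construct certified enclosures of $Z^{\unstable}$ and $Z^{\stable}$ along a horizontal segment $\{U_0+s:s\in[-L,L]\}$ chosen inside the common strip of analyticity but far enough from the singularities of the inner equation. The procedure has two parts. At each of the two infinities, I would use a high-order truncation of the known formal asymptotic series, with coefficients computed by the recurrence obtained from the inner equation, and rigorously bound the tail using a Gevrey-type remainder estimate of the same flavour as those of \cite{articleInner}; this gives interval enclosures for $Z^{\unstable/\stable}$ at the endpoints of the segment. Then I would propagate these enclosures to the reference point $U_0$ by a validated integrator (a Taylor method with interval coefficients), so that $\Delta Z(U_0)$ is enclosed in a small complex ball $B_0\subset\complexs$. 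Finally, dividing by the predicted leading factor evaluated in interval arithmetic at $U_0$ yields a ball $B\subset\complexs$ with $\CInn\in B$, and the proof reduces to verifying the finite condition $0\notin B$.

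The main obstacle will be making the enclosures sharp enough to exclude the origin. The inner equation has nearby singularities limiting the width of the strip of analyticity, so the location of $U_0$, the length $L$, and the truncation order of the asymptotic series must be optimized jointly: a higher order reduces the asymptotic tail but amplifies the wrapping error of the validated propagator, while moving $U_0$ too far from the singularities makes $\Delta Z(U_0)$ itself so exponentially small that round-off noise swamps the signal. Controlling this trade-off requires an explicit, quantitative version of the Gevrey estimates used qualitatively in \cite{articleInner}, together with careful choice of coordinates (for instance, applying the rotation that aligns the Stokes direction with the integration path, so that the exponential factor becomes purely real along the segment). Once these two technical ingredients—rigorous tail bounds for the formal series and sharp wrapping control for the integrator—are in place, executing the algorithm with sufficient working precision produces a verified complex enclosure of $\CInn$ bounded away from zero, which establishes $\Theta\neq 0$.
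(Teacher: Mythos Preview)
Your overall strategy---computer-assisted validation with initial enclosures near infinity propagated by a rigorous integrator---matches the paper's, but the execution differs in three ways worth noting.

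First, you aim to enclose $\Theta$ itself by dividing $\Delta Z(U_0)$ by the leading factor $e^{-iU_0}$. The paper sidesteps this: Theorem~\ref{theorem:mainAnalytic} already says $\Theta\neq 0$ is \emph{equivalent} to $\Delta Z(U)\neq 0$ at any single point of $\EInn$, so it suffices to check $\Delta Z(-i\rho)\neq 0$ for one $\rho>\rho_0$ (Theorem~\ref{thm:difference}), avoiding the division and the precision loss it entails. Second, your initial enclosures come from a truncated Gevrey expansion with rigorous tail bounds; the paper instead reruns the fixed-point argument with fully explicit constants (Theorem~\ref{thm:FixedPointQuantitative}, Proposition~\ref{prop:fixedpointinfty}) to get a crude box $|U^{8/3}W^\diamond|\leq\tilde b_1$, $|U^{4/3}X^\diamond|,|U^{4/3}Y^\diamond|\leq\tilde b_2$ at $U_0=-2000-i\rho_0$. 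The analytic effort goes into driving the domain parameter $\kappa^*$ down to $6.24$, so that the integration endpoint lands inside $\DuInn\cap\DsInn$. Third, the paper exploits a time-reversing symmetry $S$ of the inner system (Corollary~\ref{cor:S-symmetry}), which gives $Y^{\mathrm s}(-\bar U)=-\overline{Y^{\mathrm u}(U)}$ and hence $|\Delta Z(-i\rho)|\geq 2|\Re Y^{\mathrm u}(-i\rho)|$; only $Z^{\mathrm u}$ needs to be integrated.

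One small inaccuracy: the inner unknown is $Z=(W,X,Y)\in\mathbb{C}^3$, not a single complex variable, so the validated integrator must handle a higher-dimensional system (the paper rewrites it in real coordinates after adjoining auxiliary variables $A,B$). Your trade-off discussion about truncation order versus wrapping is well taken, but the paper's simplifications above largely defuse it.
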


The proof of this theorem relies on recent techniques developed by some of the authors in \cite{BCGS23}. Note however, that the models considered in that paper are rather simple unfoldings of the Hopf-zero singularity whereas here we deal with a Celestial Mechanics model, which makes the analysis considerably more delicate.


\begin{remark}\label{rmk:implications}
Theorem~\ref{TheoremA} and Theorem~\ref{thm:Stokes} imply that the unstable and unstable manifolds of $L_3$ do not meet at their first encounter with the section $\Sigma$.
This is expected as, in generic systems, homoclinic connections are likely to breakdown when subjected to small perturbations. 
%
%
However, this does not prevent the existence of homoclinic connections.
Indeed, in \cite{articleChaos}, by relying on the results in the present paper and \cite{articleInner, articleOuter}, some of the authors of this paper  construct homoclinic orbits to $L_3$ for a sequence of mass ratios $\mu$ tending to zero. These homoclinic orbits are two round, that is cross the section $\Sigma$ twice.
\end{remark}
\begin{remark}\label{rmk:chaos}
A fundamental question in dynamical systems is to prove that a system exhibits chaotic dynamics. 
One of the  classical ways to prove this  it is to show the existence of transverse intersections between the stable and unstable manifolds of a particular invariant object (see \cite{GuckenheimerHolmes}). 
In the case of $L_3$, its stable and unstable manifolds are $1$-dimensional and, therefore, they have too small dimension to allow for transverse intersections. 
However, in \cite{articleChaos}, we are able to prove that the splitting of the invariant manifolds of $L_3$ imply that there exist transverse intersections between the stable and unstable manifolds of certain periodic Lyapunov orbits close  to $L_3$.  
This proves the existence of chaotic dynamics close to $L_3$ and its invariant manifolds. Moreover, we are also able to prove that some of these Lyapunov periodic orbits possess quadratic homoclinic tangencies, which imply the existence of Newhouse domains in the RPC3BP.
\end{remark}

The paper is organized as follows. In Section~\ref{sec:inner} we summarize the main steps to prove Theorem~\ref{TheoremA}, performed in the previous works~\cite{articleInner, articleOuter}; we present the inner equation, which is independent on the small parameter $\mu$, and, finally, we describe the relation between the Stokes constant $\Theta$ and suitable solutions, $Z^{\mathrm{u},\mathrm{s}}$ of the inner equation(see Theorem~\ref{theorem:mainAnalytic}) below. The (short) Section~\ref{sec:proofthmStokes} is devoted to explain the strategy to prove Theorem~\ref{thm:Stokes} which consists in two main steps: 
characterization of the complex domains where $Z^{\mathrm{u},\mathrm{s}}$ are defined (Theorem~\ref{thm:FixedPointQuantitative}) and analysis of its difference (Theorem~\ref{thm:difference}).  
Then, in Section~\ref{sec:proof}, relying on the approach developed in~\cite{BCGS23}, we prove Theorem~\ref{thm:FixedPointQuantitative}.
In Section~\ref{sec:difference} we give the proof of Theorem~\ref{thm:difference}, which in part is computer assisted\footnote{The code for the computer assisted part of the proof is available on the personal web page of MJC.}.



\section{The invariant manifolds of $L_3$ and the inner equation}\label{sec:inner}

Theorem \ref{TheoremA} falls into what is usually called exponentially small splitting of separatrices. That is, on the perturbative analysis of the distance between the stable and unstable manifold of an invariant object when it is exponentially small.
Its proof follows the original approach proposed by Lazutkin in his seminal work on the Standard Map \cite{Laz84, Laz05}. Note that the papers \cite{articleInner, articleOuter} are the first ones where such approach has been implemented in a Celestial Mechanics model. We refer to \cite{articleInner} for a detailed list of references on the exponentially small splitting of separatrices phenomenon.

Let us summarize the main steps of this proof and explain where the Stokes constant arises. 

Note first that, in  the limit problem $H$ in \eqref{def:hamiltonianInitialNotSplit} with $\mu=0$, the five Lagrange points and the associated invariant manifolds  ``collapse'' into the circle of (degenerate) critical points  $\norm{q}=1$ and $p=(p_1,p_2)=(-q_2,q_1)$.
Therefore, to analyze the invariant manifolds, it is convenient  to perform a singular change of coordinates to obtain a ``new first order'' Hamiltonian which  has a  saddle-center equilibrium point with stable and unstable manifolds that coincide along a separatrix.
This change of coordinates boils down to a suitable (singular with respect to $\mu$) scaling of the classical  Poincar\'e planar elements (see~\cite{MeyerHallOffin}). These coordinates are explained in full detail in \cite{articleInner}.

In these coordinates, the proof of Theorem \ref{TheoremA} relies on the following steps. 
\begin{enumerate}[label*=\Alph*.]
	\item We perform the aformentioned change of coordinates which captures the slow-fast dynamics of the system.
	The new Hamiltonian becomes a (fast) oscillator weakly coupled to a $1$-degree of freedom Hamiltonian with a saddle point and a separatrix associated to it.
	\item We analyze the analytical continuation of a time-parametrization of the separatrix.
	In particular, we obtain its maximal strip of analyticity, which is given by $|\Im t|<A$ where $A$ is the constant introduced in \eqref{def:integralA} and $t$ the time of the parametrization.
	We also describe the character and location of the complex singularities  at the boundaries of this region.

	\item We derive the inner equation, which gives the first order of the original system close to the singularities of the separatrix described in Step B.
	This equation is independent of the perturbative parameter $\mu$.
	
	\item We study two special solutions of the inner equation which are approximations of the perturbed invariant manifolds near the singularities.
	Moreover, we provide an asymptotic formula for the difference between these two solutions of the inner equation.
	This difference is given in terms of the Stokes constant $\Theta$ introduced in Theorem \ref{TheoremA}.

\item We prove the existence of the analytic continuation of suitable parametrizations of $W^{\unstable,+}(L_3)$ and $W^{\stable,+}(L_3)$ in appropriate complex domains (and as graphs).
These domains contain a segment of
the real line and intersect a neighborhood sufficiently close to the singularities of the  separatrix.
\item By using complex matching techniques, we compare the solutions of the inner equation with the graph parametrizations of the perturbed invariant manifolds.
\item Finally, we prove that the dominant term of the difference between manifolds is given by the term obtained from the difference of the solutions of the inner equation.
\end{enumerate}

Steps A, B, C and D are performed in \cite{articleInner} whereas the Steps E, F and G are performed in \cite{articleOuter}. In particular, \cite{articleInner} showed that the constant $\Theta\in\mathbb{C}$ exists but no proof of its non-vanishing is provided.

To prove that the Stokes constant $\Theta$ is not zero, we have to perform a deeper analysis of the two special solutions of the inner equation mentioned in Step D.  To this end, we first introduce the so-called inner Hamiltonian, the computation of which (Step C) is explained in full detail in \cite{articleInner}, given by
\begin{equation}\label{def:hamiltonianInner}
	\HH(U,W,X,Y) = W + XY + \KK(U,W,X,Y),
\end{equation}
with
\begin{equation}
	\KK(U,W,X,Y) = 
	-\frac{3}{4}U^{\frac{2}{3}} W^2 
	- \frac{1}{3 U^{\frac{2}{3}}}
	\paren{\frac{1}{\sqrt{1+\JJ(U,W,X,Y)}} - 1 } \label{def:hamiltonianK}
\end{equation}
and
\begin{equation}\label{def:hFunction}
	\begin{split}
		\JJ(U,W,X,Y) =& \,  
		\frac{4 W^2}{9 U^{\frac{2}{3}} } 
		-\frac{16 W}{27 U^{\frac{4}{3}}}  
		+\frac{16}{81 U^{2}}
		%
		+\frac{4(X+Y)}{9 U}
		\paren{W -\frac{2}{3 U^{\frac{2}{3}}}} \\[0.5em]
		&- \frac{4i(X-Y)}{3 U^{\frac{2}{3}}}
		-\frac{X^2+Y^2}{3 U^{\frac{4}{3}}}
		+\frac{10 XY}{9 U^{\frac{4}{3}}},
	\end{split}
\end{equation}
and the symplectic form 
\[
\Omega=dU\wedge dW+i dX\wedge dY.
\]

Let us now summarize the main features of the analysis of the inner equation performed previously in~\cite{articleInner} (see Theorem~\ref{theorem:mainAnalytic} below). 
Following the approach presented in~\cite{BalSea08} (see also~\cite{Bal06}), we look for two suitable solutions of the Hamiltonian system associated to $\mathcal{H}$, analyzing their orbits as graphs over $U$. That is, we do not analyze the trajectories of $\HH$ directly. 
To this end, we introduce $Z=(W,X,Y)$ and the matrix
 
\begin{equation*}
	\AAA= \begin{pmatrix}
		0 & 0 & 0 \\
		0 & i & 0 \\
		0 & 0 & -i
	\end{pmatrix}.
\end{equation*}
 
With this notation, the equation associated to the Hamiltonian $\HH$ can be written as
 
\begin{equation}\label{eq:systemEDOsInner}
	\left\{ \begin{array}{l}
		\dot{U} = 1 + g(U,Z),\\
		\dot{Z} = \AAA Z + f(U,Z),
	\end{array} \right.
\end{equation}
 
where 
$f = \paren{-\partial_U \KK, 
	i \partial_Y \KK, -i\partial_X \KK }^T$ 
and
$g = \partial_{W} \KK$.
%
%
%
Therefore, to look for solutions of this equation parametrized as graphs with respect to $U$, we search functions
 
\begin{equation*}
	\Zd(U) = \big(\Wd(U),\Xd(U),\Yd(U)\big)^T,
	\qquad
	\text{for } \diamond=\unstable,\stable,
\end{equation*}
 
satisfying the invariance condition given by~\eqref{eq:systemEDOsInner}, that is
\begin{equation}\label{eq:invariantEquationInner}
	\partial_U \Zd = 
	\AAA \Zd + \RRR[\Zd],
	\qquad
	\text{for } \diamond=\unstable,\stable,
\end{equation}
where
\begin{equation}\label{def:operatorRRRInner}
	\RRR[\varphi](U)= 
	\frac{f(U,\varphi)- g(U,\varphi) \AAA \varphi }{1+g(U,\varphi)}.
\end{equation}

The special solutions $\Zd$ we are interested in, satisfy the asymptotic conditions
\begin{equation}\label{eq:asymptoticConditionsInner}
	\begin{split}
		\lim_{\Re U \to -\infty} \Zu(U) = 0, 
		\qquad 
		\lim_{\Re U \to +\infty} \Zs(U) = 0. 
	\end{split}
\end{equation}
%
%
In fact, for a fixed $\gamma \in \paren{0,\frac{\pi}{2}}$, we look for functions $\Zu$ and $\Zs$ satisfying~\eqref{eq:invariantEquationInner},
\eqref{eq:asymptoticConditionsInner} and defined in the domains
\begin{equation}\label{def:domainInnner}
	\begin{split}
		&\DuInn = \claus{ U \in \complexs \text{ : }
			|\Im U| \geq \tan \gamma \,\Re U + \frac{\kInn}{\cos \gamma}, \,
            \Re U \leq 0},
            \quad 
		\DsInn = -\DuInn,
	\end{split}
\end{equation}
respectively, for some $\kInn>0$ big enough (see Figure~\ref{fig:dominiInnerUnstable}). 

\begin{figure}[t] 
	\centering
	\begin{overpic}[scale=0.8]{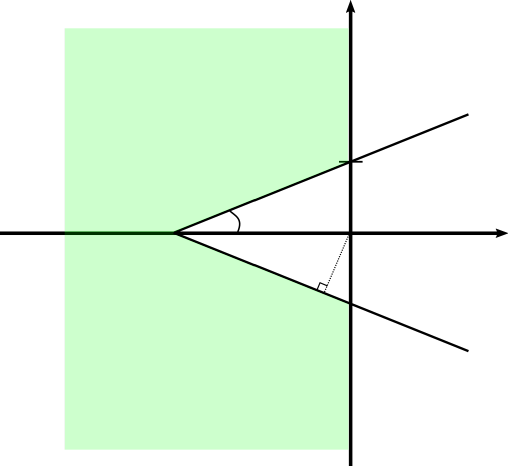}
		\put(20,60){$\DuInn$}
		\put(50,48){\small {$\gamma$}}
		\put(73,58){\small{$\rho(\kappa,\gamma) = \frac{\kInn}{\cos\gamma}$}}
        \put(62,40){\small{$\kInn$}}
		\put(102,45){\small$\Re U$}
		\put(67,93){\small $\Im U$}
	\end{overpic}
	\bigskip
	\caption{The inner domain, $\DuInn$, for the unstable case (see \eqref{def:domainInnner}).}
	\label{fig:dominiInnerUnstable}
\end{figure} 
As a consequence,  the difference $\DZ=\Zu-\Zs$ can be analyzed in the overlapping domain
 
\begin{equation*}
	\EInn = \claus{ U \in \complexs \st  \Im U \leq -\rho(\kappa,\gamma), \, \Re U =0},
\end{equation*}
 
where
\begin{equation}\label{def:rho}
\rho(\kappa,\gamma)=\frac{\kappa}{\cos\gamma}.
\end{equation}

\begin{remark}\label{rmk_before_ThInner}
    The domains used in~\cite{articleInner} are bigger than the ones consider in the present work and they have an open overlapping domain. We have chosen these smaller domains, $\DuInn$ and $\DsInn$. The resason is that, on the one hand, we will see that it is enough to analyze the difference $\Zu - \Zs $ on $\EInn=\DuInn\cap \DsInn$ (contained in the imaginary axis) and, on the other, to restrict the analysis to smaller domains, makes the explicit computation of all the constants that appear in our analysis easier.  
\end{remark}



The following result is proven in~\cite{articleInner}. 
\begin{theorem}\label{theorem:mainAnalytic}
	There exist $\kInn_0, \cttInnExistA, \cttInnExistB>0$
	such that for any $\kInn\geq\kInn_0$,	
	equation~\eqref{eq:invariantEquationInner} has analytic solutions
	$
	\Zd(U) =(\Wd(U),\Xd(U),\Yd(U))^T, 
	$ for $U \in \DdInn$, $\diamond=\unstable,\stable$, satisfying
  
	\begin{equation*}
		| U^{\frac{8}{3}} \Wd(U)| \leq \cttInnExistA, \qquad
		| U^{\frac{4}{3}} \Xd(U) | \leq \cttInnExistB, \qquad
		| U^{\frac{4}{3}} \Yd(U) | \leq \cttInnExistB.
	\end{equation*}
  
	In addition, there exist $\CInn \in \complexs$ and $\cttInnDiff>0$ independent of $\kInn$, and a function $\chi=(\chi_1,\chi_2,\chi_3)^T$ 
	such that
	\begin{equation}\label{result:innerDifference}
		\DZ(U) = \Zu(U)-\Zs(U) =
		\CInn e^{-iU} \Big(
		(0,0,1)^T + \chi(U) \Big)
	\end{equation}
	and, for $U \in \EInn$,
 \[
		| U^{\frac{7}{3}} \chi_1(U)| \leq \cttInnDiff, \qquad
		| U^{2} \chi_2(U) | \leq \cttInnDiff, \qquad
		| U \chi_3(U) | \leq \cttInnDiff.
\]
	{Moreover, $\CInn \neq 0$ if and only if $\DZ \neq 0$.}
\end{theorem}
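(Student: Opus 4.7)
The plan is to prove Theorem~\ref{theorem:mainAnalytic} by two successive Banach fixed-point arguments in weighted spaces of analytic functions, in the spirit of \cite{BalSea08,Bal06}.

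First, rewrite the invariance equation~\eqref{eq:invariantEquationInner} as an integral equation via variation of parameters. Since $\AAA=\mathrm{diag}(0,i,-i)$, the asymptotic condition $\lim_{\Re U\to -\infty}\Zu=0$ gives
\begin{equation*}
\Zu(U) \,=\, \int_{-\infty}^{U} e^{\AAA(U-s)}\, \RRR[\Zu](s)\, ds,
\end{equation*}
along a horizontal ray inside $\DuInn$ (and analogously for $\Zs$ with integration from $+\infty$). A direct inspection of~\eqref{def:hamiltonianK}--\eqref{def:hFunction} shows $[\RRR[0]]_1=\mathcal{O}(|U|^{-11/3})$ and $[\RRR[0]]_{2,3}=\mathcal{O}(|U|^{-7/3})$, which match the weights announced in the theorem thanks to the convolution estimates
\begin{equation*}
\Bigl|\int_{-\infty}^{U}\tfrac{ds}{|s|^{\alpha}}\Bigr| \leq \tfrac{C}{|U|^{\alpha-1}}, \qquad \Bigl|\int_{-\infty}^{U}\tfrac{e^{\pm i(U-s)}}{|s|^{\alpha}}\, ds\Bigr| \leq \tfrac{C}{|U|^{\alpha}} \qquad (\alpha>1),
\end{equation*}
valid along such rays thanks to the cone geometry of $\DuInn$. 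Working in the Banach space
\begin{equation*}
\mathcal{X}_\kappa = \Bigl\{ Z : \sup_{U\in\DuInn}\bigl(|U|^{8/3}|W| + |U|^{4/3}|X| + |U|^{4/3}|Y|\bigr) < \infty\Bigr\},
\end{equation*}
a Taylor expansion of $\RRR$ combined with the above convolution bounds shows that, for $\kappa\geq\kappa_0$ large enough, the integral operator sends a small closed ball into itself and is a contraction, yielding $\Zu$ (and symmetrically $\Zs$) with the stated decay.

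Next, subtracting the two invariance equations on $\EInn$ produces the linear ODE
\begin{equation*}
\partial_U \DZ = \bigl(\AAA + \mathcal{M}(U)\bigr) \DZ, \qquad \mathcal{M}(U)=\int_0^1 D\RRR\bigl[s\Zu+(1-s)\Zs\bigr](U)\, ds,
\end{equation*}
with $\mathcal{M}(U)$ of polynomial decay. Setting $\DZ(U)=e^{-iU}\Phi(U)$ transforms this into $\partial_U\Phi=(\AAA+iI+\mathcal{M}(U))\Phi$, with $\AAA+iI=\mathrm{diag}(i,2i,0)$. Since $\EInn$ lies in the lower imaginary axis, $|e^{iU}|$ and $|e^{2iU}|$ are exponentially large, so any bounded solution must project trivially onto the eigen-directions of $\AAA+iI$ with eigenvalues $i,2i$; consequently $\Phi(U)$ admits a limit of the form $\Theta\,(0,0,1)^T$ as $\Im U\to-\infty$. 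A contraction argument in a weighted space on $\EInn$ with weights $|U|^{7/3},|U|^{2},|U|$ then produces $\Phi-\Theta(0,0,1)^T$ with the decay required of $\chi$, yielding~\eqref{result:innerDifference}. The equivalence $\Theta\neq 0\Leftrightarrow\DZ\neq 0$ is immediate from this representation: if $\Theta\neq 0$ the leading term is nonvanishing; conversely, $\DZ\equiv 0$ forces $\Phi\equiv 0$, hence $\Theta=0$.

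The main technical obstacle is the first step: the polynomial weights must be calibrated precisely so that $\RRR[0]$ sits in a small ball of $\mathcal{X}_\kappa$ and $D\RRR$ yields a Lipschitz constant strictly below $1$. This requires explicit tracking of the powers of $1/U$ generated by the singular expansion of the square root in $\KK$, together with sharp estimates of the convolution kernels along the cone-shaped domain $\DuInn$, whose opening angle $\gamma$ must be chosen compatibly with the weights.
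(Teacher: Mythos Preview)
Your overall strategy matches the paper's: a fixed-point argument in weighted spaces for existence, followed by a linear analysis of the difference with exponential weights. Two points deserve attention, one minor and one a genuine gap.

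\textbf{Minor correction and a hidden subtlety in the existence step.} Your claimed decay $[\RRR[0]]_{2,3}=\mathcal{O}(|U|^{-7/3})$ is off; a direct computation from \eqref{def:hamiltonianK}--\eqref{def:hFunction} gives only $\mathcal{O}(|U|^{-4/3})$ (the leading term of $\partial_Y\JJ(U,0)$ is $4i/(3U^{2/3})$). The oscillatory convolution still yields the correct weight, so this does not break the argument. More seriously, your assertion that the integral operator is a contraction ``for $\kappa$ large enough'' hides a real difficulty: the Lipschitz bound of $\FF_1$ with respect to $(X,Y)$ does \emph{not} pick up a factor $\kappa^{-1}$, because $\partial_{X}\RRR_1,\partial_{Y}\RRR_1\in\XcalU_{7/3}$ with an $\mathcal{O}(1)$ norm and $\GG_1:\XcalU_{11/3}\to\XcalU_{8/3}$ is bounded independently of $\kappa$. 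In the reference \cite{articleInner} this is handled not by taking $\kappa$ large but by a Gauss--Seidel modification $\wt\FF$ (substituting $\FF_2[\Zu],\FF_3[\Zu]$ into the first component) which has the same fixed points and a genuinely small Lipschitz constant. You should either implement that trick or check quantitatively that the limiting Lipschitz constant is below $1$.

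\textbf{A gap in the equivalence $\Theta\neq 0\Leftrightarrow\DZ\neq 0$.} You have only argued the trivial direction twice: both ``$\Theta\neq 0\Rightarrow$ leading term nonvanishing'' and ``$\DZ\equiv 0\Rightarrow\Theta=0$'' are the same implication. The nontrivial content is $\Theta=0\Rightarrow\DZ\equiv 0$, and this does not follow from the asymptotic form \eqref{result:innerDifference} alone (a priori $\chi$ could be nonzero while $\Theta=0$). The paper's argument goes through the intermediate constant $c_y=e^{\kappa}\DY(-i\kappa)$: one first shows that $\DZ$ is the \emph{unique} fixed point of a contractive affine operator $\Psi\mapsto\DZo+\II[\Psi]$ with $\DZo=(0,0,c_y e^{-iU})^T$, so $\DZ\not\equiv 0$ forces $c_y\neq 0$; then, using the contraction rate in the exponential-weight norm, one proves $|\Theta|\geq \tfrac{2}{3}|c_y|>0$. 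Your substitution $\DZ=e^{-iU}\Phi$ is the right change of gauge, but you still need this uniqueness-plus-quantitative step to close the equivalence.
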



\begin{remark}
In the paper \cite{articleInner}, the authors compute numerically the constant $\CInn$ which is approximately $\CInn\approx 1.63$. To compute it, it suffices to take into account that ${\CInn} = \lim_{\Im U \to - \infty}{\DY(U) e^{iU}}$. Then, we consider
 
	\begin{equation*}
		\CInn_{\rho} = \vabs{ \DY(-i\rho)} e^{\rho},
	\end{equation*} 
  
which, for $\rho$ big enough, satisfies $\CInn_{\rho} \approx \vabs{\CInn}$. 

Notice that, as we already claimed in Remark~\ref{rmk_before_ThInner}, we only need to analyze the difference on $\EInn$, contained in the imaginary axis.
\end{remark}

Theorem \ref{theorem:mainAnalytic} 
is proven in~\cite{articleInner} in two steps. First, for $\diamond = \unstable,\stable$, one proves the existence of the functions $\Zd$ in domains $\DdInn$ with $\kappa$ large enough. This is achieved through a fixed point argument of Perron type. Once the existence of the two functions in a common domain is proved, one looks for an equation for its difference that is used to derive the asymptotic formula~\eqref{result:innerDifference}. 


Next section specify the concrete steps for proving Theorem~\ref{thm:Stokes}. 

\section{Strategy to prove the main result}  
\label{sec:proofthmStokes}

To prove Theorem~\ref{thm:Stokes}, we follow  the strategy developed in~\cite{BCGS23} to prove that the Stokes constant associated to  unfoldings of the Hopf-zero singularity does not vanish. However, note that in~\cite{BCGS23} the strategy is tested on rather ``simple'' unfoldings of the singularity. On the contrary, in the present paper we deal with a given model of Celestial Mechanics, which requires more accurate estimates. 

The first step to prove Theorem~\ref{thm:Stokes} is to provide  a more quantitative version of Theorem~\ref{theorem:mainAnalytic}. It provides a larger domain of definition of the functions $\Zd$ given by Theorem \ref{theorem:mainAnalytic} and  relies on  carrying out a more detailed analysis of the two solutions $\Zd$ for $\diamond=\unstable,\stable$. 


\begin{theorem}\label{thm:FixedPointQuantitative}
The functions 	$\Zd(U) =(\Wd(U),\Xd(U),\Yd(U))^T$, $\diamond=\unstable,\stable$, introduced in Theorem \ref{theorem:mainAnalytic} are defined in $\mathcal{D}^\diamond_{\kappa^*}$ with (see \eqref{def:rho})
\begin{equation}\label{def:kappastar}
\kappa^*=6.24,\quad \gamma = \frac{1}{2} \quad \text{ and } \quad \rho^*=\rho(\kappa^*, \gamma) = \frac{\kappa^*}{\cos \gamma}<\rho_0=7.12.
\end{equation}
\end{theorem}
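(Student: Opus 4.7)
The plan is to re-execute the Perron-type fixed point argument that underlies Theorem~\ref{theorem:mainAnalytic}, but with every constant tracked explicitly so that contractivity can be verified for the moderate value $\kappa^{*}=6.24$ (as opposed to the abstract ``$\kappa_{0}$ large enough'' statement). First, I rewrite the invariance equation \eqref{eq:invariantEquationInner} together with the asymptotic conditions \eqref{eq:asymptoticConditionsInner} as an integral fixed point equation. Using that the eigenvalues of $\AAA$ are $\{0,i,-i\}$ and integrating along paths inside $\mathcal{D}^{\diamond}_{\kappa^{*}}$ that go to $\mp\infty$ in the real direction, one obtains
\[
\Zd(U)=\GG^{\diamond}\bigl[\RRR[\Zd]\bigr](U),
\]
where $\GG^{\diamond}$ is, componentwise, an integral operator with kernel $e^{\AAA(U-s)}$, and where the ``$W$'' component is integrated from $\mp\infty$ while the ``$X$'' and ``$Y$'' components are integrated from $\mp\infty$ along horizontal rays inside $\mathcal{D}^{\diamond}_{\kappa^{*}}$ (the complex exponential provides decay in the appropriate half-plane).

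Second, following the weight structure suggested by Theorem~\ref{theorem:mainAnalytic}, I would work in the product weighted Banach space with norms $\|W\|_{8/3}=\sup\, |U^{8/3}W(U)|$ and $\|X\|_{4/3},\|Y\|_{4/3}$ defined analogously, restricted to $U\in\mathcal{D}^{\diamond}_{\kappa^{*}}$. I then consider the closed ball $\mathcal{B}(b_{1},b_{2})=\{\|W\|_{8/3}\le b_{1},\ \|X\|_{4/3},\|Y\|_{4/3}\le b_{2}\}$. On this ball, expand $\JJ(U,W,X,Y)$ from \eqref{def:hFunction} as a power series in $W,X,Y$ and $1/U^{2/3}$, and use the explicit estimate $|U|\ge\rho^{*}$ to obtain an \emph{a priori} bound $|\JJ|\le\delta<1$, enabling a convergent expansion of $(1+\JJ)^{-1/2}$ and, via the quotient in \eqref{def:operatorRRRInner}, of $\RRR[\varphi]$. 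This yields explicit bounds of the form $\|\RRR[\varphi]\|_{p}\le C_{1}(b_{1},b_{2},\kappa^{*},\gamma)$ and Lipschitz estimates $\|\RRR[\varphi_{1}]-\RRR[\varphi_{2}]\|_{p}\le C_{2}(b_{1},b_{2},\kappa^{*},\gamma)\,\|\varphi_{1}-\varphi_{2}\|$.

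Third, I compute the explicit operator norm of $\GG^{\diamond}$ on these weighted spaces. This reduces to bounding, along straight-line paths inside $\mathcal{D}^{\diamond}_{\kappa^{*}}$ with aperture $\gamma=\tfrac{1}{2}$, integrals of the type
\[
\sup_{U\in\mathcal{D}^{\diamond}_{\kappa^{*}}}\,|U|^{p}\!\int_{\mp\infty}^{U}\!\!|s|^{-q}\,|ds|
\qquad\text{and}\qquad
\sup_{U}\,|U|^{p}\!\int_{\mp\infty}^{U}\!\!e^{\pm i(U-s)}|s|^{-q}\,ds,
\]
for the exponents $(p,q)$ dictated by the weight choice. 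Parametrizing the ray and changing variables one gets these constants as explicit elementary functions of $\kappa^{*}$ and $\gamma$, which decrease as $\kappa^{*}$ grows. Combining the bounds on $\RRR$ and $\GG^{\diamond}$, I obtain the two inequalities needed to close the contraction: the self-mapping condition and the contractivity condition. Choosing $b_{1},b_{2}$ to balance these inequalities and verifying them numerically for the stated values $\kappa^{*}=6.24$, $\gamma=\tfrac{1}{2}$ completes the argument; uniqueness in the ball then identifies these solutions with the $\Zd$ of Theorem~\ref{theorem:mainAnalytic}.

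The main obstacle is \emph{quantitative sharpness}. The value $\kappa^{*}=6.24$ is not large, so one cannot afford loose constants at any step: the bound on $\JJ$, the subsequent geometric series estimate, the path integrals defining $\GG^{\diamond}$, and the Lipschitz constants of $\RRR$ must all be sharp enough for the product $\|\GG^{\diamond}\|\cdot\operatorname{Lip}(\RRR)<1$. In particular, the proximity $|\JJ|<1$ must hold uniformly on the ball in $\mathcal{D}^{\diamond}_{\kappa^{*}}$, which forces a careful simultaneous choice of $(b_{1},b_{2},\kappa^{*},\gamma)$. Unlike in \cite{BCGS23}, where the inner equation is a simple unfolding, here the nonlinearity inherited from \eqref{def:hamiltonianK}--\eqref{def:hFunction} mixes many monomials and square-root singularities, so the bookkeeping of dominant and subdominant terms is delicate; I expect that a rigorous chain of inequalities (possibly assisted by interval-arithmetic evaluation of a few scalar suprema) is what actually pins down the explicit constant $\kappa^{*}=6.24$.
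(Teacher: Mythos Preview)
Your proposal is essentially the paper's own approach: the same integral fixed point formulation $\Zd=\GG^{\diamond}\circ\RRR[\Zd]$, the same weighted Banach space $\XcalU_{8/3}\times\XcalU_{4/3}\times\XcalU_{4/3}$, and the same programme of tracking all constants explicitly (bounds on $\GG$, on $\FF[0]$, and Lipschitz constants of $\RRR$ via bounds on $\JJ$ and $\KK$) until contractivity is verified at $\kappa^{*}=6.24$, $\gamma=\tfrac12$. Two minor corrections: the sharp lower bound on the domain is $|U|\ge\kappa^{*}$, not $|U|\ge\rho^{*}$ (the closest point of $\partial\DuInn$ to the origin has modulus $\kappa$, not $\kappa/\cos\gamma$), and the paper closes the argument purely analytically---no interval arithmetic is used for this theorem, only explicit elementary inequalities for the functions $\alpha_0,\beta_0,\xi_j,\eta_j,\nu_j,\wt\nu_j$ evaluated at the chosen parameters.
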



The proof of this theorem is deferred to Section \ref{sec:proof}. It follows the approach developed in \cite{BCGS23} for the inner equation associated to the Hopf-zero singularity.

The second step we perform to prove Theorem \ref{thm:Stokes} is to analyze the difference $\DZ$ (see~\eqref{result:innerDifference}).
This is provided by the next theorem, whose proof is computer assisted and is deferred to Section \ref{sec:difference}.

\begin{theorem}\label{thm:difference}
	The function $\DZ(U) = \Zu(U)-\Zs(U)$ introduced in \eqref{result:innerDifference} satisfies 
 \begin{equation}
\DZ(-i\rho)\neq 0, \label{eq:Delta-Z-at-kappa-star}
 \end{equation}
 for a $\rho>\rho_0$, where $\rho_0$ is the constant introduced in \eqref{def:kappastar}.
\end{theorem}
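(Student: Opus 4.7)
The plan is to certify the non-vanishing of $\DZ(-i\rho)$ at an explicit $\rho$ slightly larger than $\rho_0 = 7.12$ by validated numerics. A crucial observation is that Theorem~\ref{thm:FixedPointQuantitative} places $-i\rho \in \mathcal{E}_{\kappa^*} = \mathcal{D}^{\mathrm{u}}_{\kappa^*}\cap \mathcal{D}^{\mathrm{s}}_{\kappa^*}$ for every $\rho \geq \rho^*$, and a fortiori for $\rho \geq \rho_0$. Hence both $\Zu(-i\rho)$ and $\Zs(-i\rho)$ are well defined and subject to the quantitative bounds of Theorems~\ref{theorem:mainAnalytic}--\ref{thm:FixedPointQuantitative}. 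By the asymptotic formula~\eqref{result:innerDifference}, the expected size of $\DZ(-i\rho)$ is of order $|\CInn| e^{-\rho} \approx 1.63\cdot e^{-\rho}$, concentrated in the $Y$-component; it therefore suffices to produce separate rigorous enclosures $\mathcal{Z}^{\mathrm{u}}, \mathcal{Z}^{\mathrm{s}} \subset \mathbb{C}^3$ of $\Zu(-i\rho)$ and $\Zs(-i\rho)$, sharp enough that the $Y$-coordinate of their difference is bounded away from zero.

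To obtain starting data, I would fix some $\rho_\infty \gg \rho_0$ and use the decay bounds of Theorem~\ref{theorem:mainAnalytic}, with the constants made explicit by the fixed-point argument of Theorem~\ref{thm:FixedPointQuantitative}, to enclose $\Zd(-i\rho_\infty)$ for $\diamond = \mathrm{u}, \mathrm{s}$ inside small boxes around the origin, with radii dominated by $b_1 \rho_\infty^{-8/3}$ in the $W$ coordinate and $b_2 \rho_\infty^{-4/3}$ in the $X, Y$ coordinates. Sharper initial enclosures can be obtained by first subtracting an explicit truncation of the formal asymptotic series for $\Zd$ at infinity and enclosing the remainder via the same fixed-point operator; either way the initial uncertainty decays as a negative power of $\rho_\infty$.

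The second and main step is to transport these two enclosures from $-i\rho_\infty$ to $-i\rho$ along the imaginary axis. The vertical segment $\{-it : t \in [\rho, \rho_\infty]\}$ lies entirely in both $\mathcal{D}^{\mathrm{u}}_{\kappa^*}$ and $\mathcal{D}^{\mathrm{s}}_{\kappa^*}$ by the choices \eqref{def:kappastar}, so both solutions can be propagated along the same real parameter $t$. Parametrising $U = -it$ transforms \eqref{eq:invariantEquationInner} into the complex ODE $dZ^\diamond/dt = -i\bigl(\mathcal{A}Z^\diamond + \mathcal{R}[Z^\diamond]\bigr)$, which upon splitting $Z^\diamond$ into real and imaginary parts becomes a real analytic ODE in six real variables, to which a rigorous Taylor-model integrator such as the CAPD library can be applied. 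This yields enclosures $\mathcal{Z}^{\mathrm{u}}, \mathcal{Z}^{\mathrm{s}}$ at $-i\rho$, and the computer verification reduces to checking that the $Y$-coordinate of $\mathcal{Z}^{\mathrm{u}} - \mathcal{Z}^{\mathrm{s}}$ does not contain $0$.

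The hard part is controlling the unavoidable growth of interval enclosures (the \emph{wrapping effect}) along $[\rho, \rho_\infty]$: the propagated error must stay well below the target size $|\CInn| e^{-\rho}$, which for $\rho$ just above $7.12$ is of order $10^{-3}$. This is precisely what drives the optimisation of the constants in Theorem~\ref{thm:FixedPointQuantitative}: the moderate opening angle $\gamma = 1/2$ and the value $\kappa^* = 6.24$ are pushed as far as the quantitative contraction argument allows, so that $\rho^* < \rho_0 = 7.12$ and the integration segment $[\rho, \rho_\infty]$ is as short as possible while the a priori tail bound at $-i\rho_\infty$ remains small enough to be usable. Once the enclosure of $\DY(-i\rho)$ excludes $0$, the statement $\DZ(-i\rho) \neq 0$ of Theorem~\ref{thm:difference} follows immediately.
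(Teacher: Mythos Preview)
Your plan has a genuine gap at the very first step, and it does not match what the paper actually does.

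At $U=-i\rho_\infty$ the only rigorous information you invoke for $Z^{\mathrm u}$ and $Z^{\mathrm s}$ is the a~priori bound of Theorems~\ref{theorem:mainAnalytic}/\ref{thm:FixedPointQuantitative}. But these bounds are \emph{identical} for the two solutions: both lie in the same box of radii $b_1\rho_\infty^{-8/3}$, $b_2\rho_\infty^{-4/3}$, $b_2\rho_\infty^{-4/3}$, and subtracting a truncated formal expansion does not help because $Z^{\mathrm u}$ and $Z^{\mathrm s}$ share the \emph{same} formal series (their difference is a beyond-all-orders effect). Hence your initial enclosures $\mathcal Z^{\mathrm u}$ and $\mathcal Z^{\mathrm s}$ coincide; propagating two copies of one box by the same ODE along the same segment yields the same output box, and the enclosure of the difference automatically contains $0$. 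Even if you patch this with the symmetry and try to detect $\Re Y^{\mathrm u}(-i\rho)$ directly, the formal series is purely imaginary on the imaginary axis, so the signal $\Re Y^{\mathrm u}(-i\rho_\infty)$ is of order $e^{-\rho_\infty}$ while your remainder bound is only polynomially small; worse, along the imaginary segment the $e^{-iU}$ mode inflates errors by a factor $e^{\rho_\infty-\rho}$, which is catastrophic.

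The paper avoids both problems by two ideas you did not use. First, it observes a time-reversing symmetry $S$ of the inner system which gives $Y^{\mathrm s}(-i\rho)=-\overline{Y^{\mathrm u}(-i\rho)}$, so $\Delta Y(-i\rho)=2\,\Re Y^{\mathrm u}(-i\rho)$ and only $Z^{\mathrm u}$ needs to be enclosed. Second, it does \emph{not} start on the imaginary axis: it places the initial point at $U_0=-2000-i\rho_0$, deep inside $\mathcal D^{\mathrm u}$ where the unstable asymptotic condition alone pins $Z^{\mathrm u}(U_0)$ down to a box of radius $\tilde b_2|U_0|^{-4/3}\approx 3\cdot10^{-5}$ (Proposition~\ref{prop:fixedpointinfty} with $\eta^*=1000$). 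It then integrates the full Hamiltonian flow \eqref{eq:ode-AB} (rewritten with auxiliary variables $A,B$ to make it essentially polynomial) forward until it crosses $\{\Re U=0\}$; along this nearly horizontal trajectory $\Im U$ stays close to $-\rho_0$, so neither $e^{\pm iU}$ mode causes exponential blow-up, and the CAPD enclosure at the section gives $\Re Y^{\mathrm u}(-i\rho)\in[-0.00075,-0.0005]$, bounded away from $0$. The choice of the starting point off the imaginary axis, together with the symmetry reduction, is what makes the computation feasible; your vertical-segment scheme cannot succeed as written.
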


 By  Theorem \ref{theorem:mainAnalytic}, $\DZ(-i\rho)\neq 0$ implies that 
    \[\Theta\neq 0
    \]
and this completes the proof of Theorem \ref{thm:Stokes}.

\section{The domain of the solutions of the inner equation}
\label{sec:proof}
The proof of Theorem \ref{thm:FixedPointQuantitative} relies on a fixed point argument, and follows the same lines as the proof of the first part of Theorem \ref{theorem:mainAnalytic} in \cite{articleInner}. The main difference between the two proofs is that now we need explicit estimates for the fixed point argument. Moreover, they have to be rather accurate so that we obtain the $\kappa^*$ given by Theorem~\ref{thm:FixedPointQuantitative}. Note that a larger $\kappa^*$ would lead to a larger $\rho^*$ and $\rho_0$ (see \eqref{def:kappastar}), which would make harder to prove Theorem \ref{thm:difference}, since the difference $\Delta Z$ is exponentially small with respect to $\rho_0$.

%
We denote the components of all the functions and operators by a numerical sub-index $f=(f_1,f_2,f_3)^T$, unless stated otherwise.
%
Moreover, we deal only with the analysis for $\Zu$. The analysis for $\Zs$  is analogous and leads to exactly the same estimates.

\subsection{The fixed point equation and the functional setting}
\label{subsection:innerExistence}

%


The invariance equation~\eqref{eq:invariantEquationInner}  can be written as $\LL \Zu = \RRR[\Zu]$
where $\LL$ is  the linear operator
  
\begin{equation*}
	\LL \varphi =(\partial_U-\AAA)\varphi.
\end{equation*}
  
To construct a fixed point equation from \eqref{eq:invariantEquationInner}, we consider the following left inverse operator of $\LL$,
\begin{equation}\label{def:operatorGG}
		\GG[\varphi](U) = \left(\int_{-\infty}^0 \varphi_1(s+U) ds, 
		\int_{-\infty}^0 e^{-i s} \varphi_2(s+U) ds , 
		\int_{-\infty}^0 e^{i s} \varphi_3(s+U) ds \right)^T,
	\end{equation}
(see Lemma \ref{lemma:boundsOperatorGG} below). We look for a fixed point of the  operator 
\begin{equation}\label{def:operatorFF}
	\FF= \GG \circ \RRR,
\end{equation}
 in a suitable Banach space. 



Given $\nu \in \reals$ and $\kInn>0$, we define the norm
\[
	\normInn{\varphi}_{\nu}= \sup_{U \in \DuInn} \vabs{U^{\nu} \varphi(U) },	
	%
	%
\]
where the domain $\DuInn$ is given in~\eqref{def:domainInnner}, 
and we introduce the Banach space
\begin{equation}\label{def:banachSpacesX}
	\begin{split}
		\XcalU_{\nu}&= 
		\left\{ \varphi: \DuInn \to \complexs  \st  
		\varphi \text{ analytic, } 
		\normInn{\varphi}_{\nu} < +\infty \right\}.
	\end{split}
\end{equation}

	%
	
%
%
A solution of 
$
\Zu=\FF[\Zu]
$
belonging to $\XcalU_{\eta} \times \XcalU_{\nu} \times \XcalU_{\nu}$
with $\eta,\nu>0$
satisfies equation~\eqref{eq:invariantEquationInner} and the asymptotic condition~\eqref{eq:asymptoticConditionsInner}.
Then, to prove Theorem \ref{thm:FixedPointQuantitative}, we look for a fixed point of  the operator $\FF$  in the Banach space
\begin{equation}\label{def:XcalUTotal}
	\XcalUTotal = \XcalU_{\frac{8}{3}} \times \XcalU_{\frac{4}{3}} \times \XcalU_{\frac{4}{3}}, 
\end{equation}
endowed with the norm
\begin{equation}
\label{def:norminnTotal}
\normInnTotal{\varphi}= \max\left\{
\normInn{\varphi_1}_{\frac{8}{3}},
\normInn{\varphi_2}_{\frac{4}{3}}, 
\normInn{\varphi_3}_{\frac{4}{3}}\right\}.    
\end{equation}

Theorem \ref{thm:FixedPointQuantitative} is a direct consequence of the following proposition.
\begin{proposition}\label{proposition:innerExistence}For any $\kInn \geq \kInnS$ where $\kInnS$ is the constant introduced in \eqref{def:kappastar}, the fixed point equation $\Zu = {\FF}[\Zu]$ has a solution $\Zu \in \XcalUTotal$.
	%
\end{proposition}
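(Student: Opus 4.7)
The plan is to prove Proposition \ref{proposition:innerExistence} by a Banach fixed point argument for the operator $\FF=\GG\circ\RRR$ on a suitable closed ball in $\XcalUTotal$, paying close attention to the explicit value of constants since the target $\kappa^*=6.24$ is quite small.

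First, I would establish quantitative mapping properties of the linear operator $\GG$ defined in \eqref{def:operatorGG}. Writing $\rho=\rho(\kappa,\gamma)=\kappa/\cos\gamma$, the key fact is that on $\DuInn$ the boundary segment $\partial\DuInn$ has distance at least $\rho$ from the origin, and a point $U\in\DuInn$ satisfies $|U+s|\ge \max(|U|-s,\rho)$ along the integration path $s\in(-\infty,0]$. From this I would extract, for the first component (no oscillation), an estimate of the form $\normInn{\GG_1[\varphi_1]}_{\nu-1}\le c_1(\nu,\kappa)\normInn{\varphi_1}_\nu$, with an explicit $c_1$ that behaves like $1/(\nu-1)$ combined with a geometric factor that depends on $\kappa$ and $\gamma$. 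For the second and third components I would exploit the oscillating kernels $e^{\mp is}$ via integration by parts, gaining an extra factor roughly $1/|U|$ and yielding $\normInn{\GG_{2,3}[\varphi_{2,3}]}_\nu\le c_{2,3}(\nu,\kappa)\normInn{\varphi_{2,3}}_\nu$, with constants decaying with $\kappa$. These estimates would be organised into a single inequality $\normInnTotal{\GG[\psi]}\le C_\GG(\kappa)\normInnTotal{\psi}$ with an explicit $C_\GG$ that decreases as $\kappa$ grows.

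Next, I would estimate the nonlinear operator $\RRR$ defined in \eqref{def:operatorRRRInner} on a ball $B_r=\{\varphi\in\XcalUTotal:\normInnTotal{\varphi}\le r\}$. This splits into two steps. (a) Control of $\RRR[0]$: using the definition \eqref{def:hamiltonianK}--\eqref{def:hFunction} one has at $Z=0$ that $\JJ(U,0)=16/(81U^2)$, which gives the explicit leading behaviour of $\KK(U,0)$ and hence of $f(U,0)$ and $g(U,0)$. A direct expansion of $(1+\JJ)^{-1/2}-1$ together with careful bounds on $U^{-4/3}$, $U^{-8/3}$ etc. on $\DuInn$ yields $\normInnTotal{\RRR[0]}\le b_0(\kappa)$ with a decreasing explicit $b_0$. (b) Lipschitz control: I would bound $D\RRR$ on $B_r$ by differentiating the quotient in \eqref{def:operatorRRRInner}, writing
\[
D\RRR[\varphi]=\frac{Df(U,\varphi)-Dg(U,\varphi)\AAA\varphi-g(U,\varphi)\AAA}{1+g(U,\varphi)}-\frac{(f(U,\varphi)-g(U,\varphi)\AAA\varphi)Dg(U,\varphi)}{(1+g(U,\varphi))^2}.
\]
Each of $Df$, $Dg$ is a rational expression in $U$ and $\varphi$ stemming from $\JJ$, and on $B_r$ with $\kappa$ not too small the denominator $1+g$ is bounded away from zero. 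Putting everything together I would obtain $\normInnTotal{\RRR[\varphi]-\RRR[\psi]}\le L(\kappa,r)\normInnTotal{\varphi-\psi}$.

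Finally, composing with $\GG$ gives $\normInnTotal{\FF[0]}\le C_\GG(\kappa) b_0(\kappa)$ and Lipschitz constant $C_\GG(\kappa) L(\kappa,r)$ on $B_r$. Choosing $r$ of the same order as $C_\GG(\kappa)b_0(\kappa)$ and then checking numerically that for $\kappa=\kappa^*=6.24$, $\gamma=1/2$ the two inequalities
\[
C_\GG(\kappa)L(\kappa,r)<1,\qquad \frac{C_\GG(\kappa) b_0(\kappa)}{1-C_\GG(\kappa)L(\kappa,r)}\le r
\]
are satisfied would let the Banach fixed point theorem deliver a unique $\Zu\in B_r\subset\XcalUTotal$ with $\Zu=\FF[\Zu]$. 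For any $\kappa\ge\kappa^*$ the same bounds improve, so the statement holds on the full range. The main obstacle will be step (b): tracking the constants through the differentiation of $\KK$ tightly enough that the threshold for contraction comes out below the announced $\kappa^*=6.24$. Because several of the monomials in $\JJ$ decay only like $U^{-1}$, one cannot afford crude triangle inequalities and will have to group terms and use the slowest decay $|U|^{-1}\le 1/\rho^*$ on $\DuInn$ in a disciplined way, probably splitting the Lipschitz bound into a sum of explicit rational functions of $r$ and $\kappa$ that are then optimised.
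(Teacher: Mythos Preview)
Your plan is essentially the paper's approach: a Banach fixed point argument for $\FF=\GG\circ\RRR$ with explicit constants tracked throughout, organised into (i) bounds on $\GG$, (ii) bounds on $\FF[0]$ via $\RRR[0]$, (iii) Lipschitz bounds on $\RRR$ via its derivatives, and (iv) numerical verification at $\kappa^*=6.24$, $\gamma=1/2$.

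A few tactical differences are worth noting. First, the paper does not bound $\GG_{2,3}$ by integration by parts; instead it deforms the integration path in $\DuInn$ to a ray at angle $\sigma\in(0,\gamma]$, which yields the $\kappa$-independent constant $(\sin\sigma)^{-1}(\cos\sigma)^{-\nu}$. In particular your claim that the $\GG$-constants themselves decay in $\kappa$ is not how it works out: the smallness in $\kappa$ enters only through the $\RRR$-estimates (extra powers of $|U|^{-1}\le\kappa^{-1}$ in $D\RRR$). Second, rather than a single ball $B_r$, the paper works on a rectangle $\rectangle$ with two independent radii $\alpha=\varrho_1\alpha_0(\kappa)$ for $W$ and $\beta=\varrho_2\beta_0(\kappa)$ for $X,Y$; the ratio $\varrho_1/\varrho_2\approx 20$ is tuned to match $\beta_0/\alpha_0$, and this extra degree of freedom is what lets the final Lipschitz constant come out at $L\le 0.93$ for $\kappa^*=6.24$. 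With a single radius you would effectively be forcing $\varrho_1\alpha_0=\varrho_2\beta_0$ anyway, so the end result is similar, but the paper's parameterisation makes the well-definedness condition $\FF:\rectangle\to\rectangle$ cleaner to check. Finally, the paper points out that the operator $\FF_1$ has a Lipschitz contribution $2\wt\nu_2$ in the $X,Y$ directions that does \emph{not} vanish as $\kappa\to\infty$; getting $L<1$ therefore hinges on accurate estimates of $\partial_{UX}\KK$, $\partial_{UY}\KK$ rather than on taking $\kappa$ large, which is exactly the obstacle you identify in your last paragraph.
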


We devote the rest of this section to prove Proposition \ref{proposition:innerExistence}. Note that our goal is to compute explicit  estimates throughout the proof to obtain a ``not too large'' $\kInnS$. 

%

Let us  explain the main steps of the proof, which are carried out in the forthcoming sections.

\begin{itemize}
	\item In Section~\ref{subsection:operatorGG}, we provide properties of the Banach spaces introduced in \eqref{def:banachSpacesX} and give explicit bounds for the norm of the linear operator $\GG$ in \eqref{def:operatorGG}.
	\item In Section~\ref{subsection:firstIteration}, we give estimates for $\FF[0]$. 
	\item In Section~\ref{subsection:LipschitzRRR}, we provide estimates for the derivatives of $\RRR$ (see \eqref{def:operatorRRRInner}) in a suitable domain.
	\item In Section~\ref{subsection:operatorFF}, we provide explicit expressions for the Lipschitz constant of the operator $\FF$ in a suitable region of the Banach space~\eqref{def:XcalUTotal}. Finally, we prove that the Lipschitz constant is smaller than $1$, and therefore  $\FF$ is contractive and has a unique fixed point.
\end{itemize}

To have accurate estimates in these steps, we rely on explicit formulae for the non-linear operator $\RRR$ in  \eqref{def:operatorRRRInner} and its first and second derivatives. They are given in  Appendix~\ref{appendix:formulasRRR}.

Note that in \cite{articleInner}, we were not able to prove that $\FF$ is contractive. Instead, we considered a  slightly modified operator which had the same fixed points as $\FF$ and a smaller Lipschitz constant. Since the estimates in the present paper are more accurate, we are able to prove that the original operator $\FF$ is contractive.


\subsection{Banach space  properties and the integral  operator}
\label{subsection:operatorGG}
Throughout this and the forthcoming sections we use without mentioning the following remark and lemma.

\begin{remark}
  Let $U \in \DuInn$, then
  $
    \vabs{U} \geq \kInn.
  $
\end{remark}

Next lemma, proven in~\cite{Bal06}, gives some properties of the Banach spaces $\XcalU_{\eta}$ introduced in \eqref{def:banachSpacesX}.
%
%
%

\begin{lemma}\label{lemma:sumNorms}
	Let $\kInn>0$ and $\nu, \eta \in \reals$. The following statements hold:
	\begin{enumerate}
		\item If $\nu > \eta$, then $\XcalU_{\nu} \subset \XcalU_{\eta}$ and
		$
		\normInn{\varphi}_{\eta} \leq  
		{\kInn}^{\eta-\nu} 
		\normInn{\varphi}_{\nu}.
		$
		\item If $\varphi \in \XcalU_{\nu}$ and 
		$\zeta \in \XcalU_{\eta}$, then the product
		$\varphi \zeta \in \XcalU_{\nu+\eta}$ and
		$
		\normInn{\varphi \zeta}_{\nu+\eta} \leq \normInn{\varphi}_{\nu} \normInn{\zeta}_{\eta}.
		$
	\end{enumerate}
\end{lemma}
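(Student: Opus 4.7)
The plan is to establish both statements directly from the defining formula of the weighted supremum norm $\normInn{\varphi}_\nu=\sup_{U\in\DuInn}\vabs{U^{\nu}\varphi(U)}$, using only the elementary fact that every point of the inner domain satisfies $\vabs{U}\geq\kInn$, which follows from the definition~\eqref{def:domainInnner} together with $\kInn>0$.

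For the first statement, I would write for any $\varphi\in\XcalU_\nu$ and $U\in\DuInn$ the identity
\[
U^{\eta}\varphi(U)=U^{\eta-\nu}\cdot U^{\nu}\varphi(U),
\]
and bound $\vabs{U}^{\eta-\nu}\leq\kInn^{\eta-\nu}$. The last inequality uses that $\eta-\nu<0$ together with $\vabs{U}\geq\kInn$, so that raising to a negative exponent reverses the inequality. Taking absolute values and supremum over $U\in\DuInn$ gives the claimed bound $\normInn{\varphi}_\eta\leq\kInn^{\eta-\nu}\normInn{\varphi}_\nu$, which in particular is finite, so $\varphi\in\XcalU_\eta$ (analyticity on $\DuInn$ is preserved, since the two spaces only differ by the weight).

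For the second statement, for $\varphi\in\XcalU_\nu$, $\zeta\in\XcalU_\eta$, and $U\in\DuInn$, I would just use the multiplicativity
\[
\vabs{U^{\nu+\eta}\varphi(U)\zeta(U)}=\vabs{U^{\nu}\varphi(U)}\cdot\vabs{U^{\eta}\zeta(U)}\leq\normInn{\varphi}_\nu\,\normInn{\zeta}_\eta,
\]
and then take supremum on the left. Analyticity of $\varphi\zeta$ on $\DuInn$ is immediate from analyticity of $\varphi$ and $\zeta$, so $\varphi\zeta\in\XcalU_{\nu+\eta}$.

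There is really no substantive obstacle here; the only point that requires a moment of care is the direction of the inequality $\vabs{U}^{\eta-\nu}\leq\kInn^{\eta-\nu}$ when the exponent is negative, which is why the hypothesis $\nu>\eta$ is needed. Both statements are standard weighted-norm properties and the proof should fit in a few lines.
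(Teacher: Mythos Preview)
Your proof is correct and is exactly the standard direct verification from the definition of the weighted norm together with the remark $|U|\geq\kInn$ on $\DuInn$. The paper does not actually prove this lemma but cites \cite{Bal06}; your argument is the natural one and matches what that reference contains.
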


Now we provide estimates for the integral operator introduced in~\eqref{def:operatorGG}. 

\begin{lemma}\label{lemma:boundsOperatorGG}
Consider the linear operator $\GG[\phiA]=(\GG_1[\phiA_1],\GG_2[\phiA_2],\GG_3[\phiA_3])^T$ as defined in~\eqref{def:operatorGG} and fix $\eta>1$, $\nu>0$ and  $\kInn\geq 1$.
Then, $\GG: \XcalU_{\eta} \times \XcalU_{\nu} \times \XcalU_{\nu} \to \XcalU_{\eta-1} \times \XcalU_{\nu} \times \XcalU_{\nu}$ is a continuous linear operator and is a left-inverse of $\LL$.

Moreover,
 \begin{enumerate}
	\item For $j=1,2,3$ and $\eta>1$, the linear operator  
	$\GG_j: \XcalU_{\eta} \to \XcalU_{\eta-1}$ is continuous and satisfies that, for $\phiA \in \XcalU_{\eta}$,
 \[
		\normInn{\GG_j[\phiA]}_{\eta-1} \leq G_{\eta} \normInn{\phiA}_{\eta},
		\qquad
		G_{\eta} = \frac{\sqrt{\pi} \, \Gamma\paren{\frac{\eta-1}2 }}{2 \Gamma\paren{\frac{\eta}2}}.
	\]
	\item For $j=2,3$ and $\nu>0$, the linear operator  
	$\GG_j: \XcalU_{\nu} \to \XcalU_{\nu}$ is continuous and, for all $0<\sigma\leq\gamma$ (see \eqref{def:domainInnner}) and $\phiA \in \XcalU_{\nu}$, satisfies that 
 \[
		\normInn{\GG_j[\phiA]}_{\nu} \leq 
		\frac{\normInn{\phiA}_{\nu}}{\sin{\sigma}(\cos{\sigma})^{\nu}}.
	\]
	\end{enumerate}
\end{lemma}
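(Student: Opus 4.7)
\emph{Stage 1 (left inverse and analyticity).} The substitution $t = s + U$ in the defining integral rewrites
\[
\GG_1[\phiA_1](U) = \int_{-\infty}^U \phiA_1(t)\,dt,
\qquad
\GG_j[\phiA_j](U) = e^{\mp iU}\int_{-\infty}^U e^{\pm it}\phiA_j(t)\,dt \quad (j=2,3),
\]
and differentiating under the integral gives $\partial_U\GG_1[\phiA_1] = \phiA_1$ and $\partial_U\GG_j[\phiA_j] = \pm i\,\GG_j[\phiA_j] + \phiA_j$, which is precisely $\LL\GG[\phiA] = \phiA$. Analyticity of $\GG_j[\phiA_j]$ on $\DuInn$ and the admissibility of the horizontal integration path follow from the pointwise estimate on $\phiA_j$ together with the observation that the horizontal ray $(-\infty,\Re U]+i\Im U$ sits in $\DuInn$ whenever $U\in\DuInn$.

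\emph{Stage 2 (bounds of item 1).} Write $U = |U|e^{i\beta}$ and substitute $s = |U|t$, $t \in (-\infty,0]$; after reflecting the lower half of $\DuInn$ into the upper, one may assume $\beta \in [\pi/2,\pi]$, so that $\cos\beta \leq 0$. Since $|e^{\pm is}| = 1$ on the real axis and
\[
|s+U|^2 = |U|^2\,|t+e^{i\beta}|^2 = |U|^2\,(t^2+2t\cos\beta+1) \geq |U|^2\,(1+t^2)
\]
(both $t$ and $\cos\beta$ are nonpositive), the pointwise bound on $\phiA_j$ gives
\[
|\GG_j[\phiA_j](U)| \leq \frac{\normInn{\phiA_j}_\eta}{|U|^{\eta-1}}\int_{-\infty}^0 \frac{dt}{(1+t^2)^{\eta/2}} = \frac{G_\eta\,\normInn{\phiA_j}_\eta}{|U|^{\eta-1}},
\]
after recognising $G_\eta$ as half of the classical integral $\int_{\reals}(1+t^2)^{-\eta/2}\,dt = \sqrt{\pi}\,\Gamma\bigl(\tfrac{\eta-1}{2}\bigr)/\Gamma\bigl(\tfrac{\eta}{2}\bigr)$.

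\emph{Stage 3 (improved bounds of item 2).} For $\GG_2$ I deform the real contour to the rotated ray $s = re^{i\sigma}$, $r \in (-\infty,0]$, for any $\sigma \leq \gamma$. The rotated ray has slope $\tan\sigma \leq \tan\gamma$ and so, shifted by $U \in \DuInn$, stays in $\DuInn$ (it is parallel to the upper boundary at $\sigma = \gamma$ and points strictly into the interior for $\sigma < \gamma$); combined with the decay $|e^{-is}| = e^{r\sin\sigma}$ as $r \to -\infty$, Cauchy's theorem legitimises the deformation. The substitution $u = -r\sin\sigma$ then rewrites
\[
\GG_2[\phiA_2](U) = \frac{e^{i\sigma}}{\sin\sigma}\int_0^\infty e^{-u(1-i\cot\sigma)}\,\phiA_2\bigl(U - u(\cot\sigma + i)\bigr)\,du.
\]
Writing $U = a + ib$ with $a \leq 0$, $b \geq 0$, minimising $|U-u(\cot\sigma+i)|^2 = (u\cot\sigma + |a|)^2+(b-u)^2$ over $u \geq 0$ produces either the interior critical value $(|a|\sin\sigma + b\cos\sigma)^2$ (when $b \geq |a|\cot\sigma$) or the boundary value $|U|^2$ (otherwise); in both cases a short algebraic check yields the uniform lower bound $|U-u(\cot\sigma+i)| \geq |U|\cos\sigma$. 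Inserting this and using $\int_0^\infty e^{-u}\,du = 1$ gives $|\GG_2[\phiA_2](U)| \leq \normInn{\phiA_2}_\nu/(\sin\sigma\,(|U|\cos\sigma)^\nu)$, as claimed; the case $\GG_3$ is the reflection $\sigma \mapsto -\sigma$.

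\emph{Main difficulty.} The only slightly delicate step is the sharp lower bound $|U - u(\cot\sigma + i)| \geq |U|\cos\sigma$ in Stage 3. A naive estimate $|r + V| \geq |\Im V|$ applied to $V = Ue^{-i\sigma}$ only produces $|U|\sin\sigma$, which would yield the weaker denominator $\sin^{\nu+1}\sigma$ instead of the sharp $\sin\sigma\cos^\nu\sigma$. Recovering the $\cos^\nu\sigma$ factor requires the full minimisation on $\{u \geq 0\}$: the side condition $b \geq |a|\cot\sigma$ for the interior critical point to be nonnegative places $\arctan(b/|a|) + \sigma$ in $[\pi/2, \pi/2 + \sigma]$, whence $|a|\sin\sigma + b\cos\sigma = |U|\sin(\arctan(b/|a|)+\sigma) \geq |U|\cos\sigma$, which is what ultimately enforces the precise constant in the lemma.
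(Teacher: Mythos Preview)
Your proof is correct and follows essentially the same strategy as the paper: item~1 via the elementary inequality $|s+U|^2\geq s^2+|U|^2$ for $s\leq0$, $\Re U\leq0$, together with the beta-integral identity, and item~2 via rotating the contour to $s=re^{i\sigma}$ and minimising $|U+re^{i\sigma}|$ over $r\leq0$. The paper carries out the latter minimisation in polar coordinates (writing the minimiser as $t_*=-|U|\cos(\arg U-\sigma)$ and the critical value as $|U||\sin(\arg U-\sigma)|$), whereas you do it in Cartesian coordinates; the two computations are equivalent and yield the same lower bound $|U|\cos\sigma$. One small remark: your assumption ``$b\geq0$'' in Stage~3 is unnecessary and should be dropped---when $b<0$ one automatically has $b<|a|\cot\sigma$, so your boundary case $u=0$ applies and gives the trivial bound $|U|\geq|U|\cos\sigma$.
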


The proof of this lemma follows the same lines of  the proof of \cite[Lemma 2.1]{BCGS23}. We first state the following lemma.

\begin{lemma}\label{lemma:boundsIntegralOperatorGG}
Let $\eta>1$. Then,
\[
	\int^{0}_{-\infty} \frac{d s}{(1+s^2)^{\frac{\eta}2}}
	\leq
	\frac{\sqrt{\pi} \, \Gamma\paren{\frac{\eta-1}2 }}{2 \Gamma\paren{\frac{\eta}2}}.
\]
\end{lemma}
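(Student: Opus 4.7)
The plan is to recognize the integral as a standard Beta function representation. First, I would exploit the evenness of the integrand $s\mapsto(1+s^{2})^{-\eta/2}$ via the substitution $s\mapsto -s$ to rewrite
\[
\int_{-\infty}^{0}\frac{ds}{(1+s^{2})^{\eta/2}}
=\int_{0}^{\infty}\frac{ds}{(1+s^{2})^{\eta/2}},
\]
which is finite precisely because $\eta>1$.

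Next, I would apply the trigonometric substitution $s=\tan\theta$, so that $ds=\sec^{2}\theta\,d\theta$ and $1+s^{2}=\sec^{2}\theta$. This turns the integral into
\[
\int_{0}^{\pi/2}\cos^{\eta-2}\theta\,d\theta,
\]
a well-known Wallis-type integral. By the standard Beta--Gamma identity
\[
\int_{0}^{\pi/2}\cos^{\eta-2}\theta\,d\theta
=\tfrac{1}{2}B\!\left(\tfrac{\eta-1}{2},\tfrac{1}{2}\right)
=\frac{\Gamma\!\left(\tfrac{\eta-1}{2}\right)\Gamma\!\left(\tfrac{1}{2}\right)}{2\,\Gamma\!\left(\tfrac{\eta}{2}\right)},
\]
and using $\Gamma(1/2)=\sqrt{\pi}$, I get the right-hand side of the claimed estimate (in fact with equality).

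There is no real obstacle here: the argument is essentially a textbook calculation. The only thing to check is convergence at $-\infty$, which is exactly the hypothesis $\eta>1$, and the identification of the half-axis integral with the Beta function. The inequality form in the statement is presumably used merely because only an upper bound is needed later to estimate the operator norm of $\GG$ in Lemma~\ref{lemma:boundsOperatorGG}.
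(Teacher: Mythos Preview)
Your proof is correct and essentially the same as the paper's: the paper applies the substitution $s=\tan x$ directly on $(-\infty,0]$ to reach $\int_{0}^{\pi/2}\cos^{\eta-2}x\,dx$ and then invokes the Beta--Gamma identity, just as you do (after first flipping the integration to $[0,\infty)$ by evenness). Your observation that the bound is in fact an equality is also correct.
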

\begin{proof}
Considering the change of coordinates $s=\tan x$,
\[
		\int^{0}_{-\infty} \frac{d s}{(1+s^2)^{\frac{\eta}2}} \leq 
		\int_{-\frac{\pi}2}^0 \frac{dx}{(1+\tan(x)^2)^{\frac{\eta}2}\cos^2(x)}
		= 
		\int^{\frac{\pi}2}_{0} \cos^{\eta-2}(x) dx.
\]
To complete the proof of the lemma, it only remains to recall that 
\[
\int^{\frac{\pi}2}_{0} \cos^{\eta-2}(x) dx=\frac{1}{2}B\left(\frac{\eta-1}{2},\frac{1}{2}\right)=\frac{\sqrt{\pi} \, \Gamma\paren{\frac{\eta-1}2 }}{2 \Gamma\paren{\frac{\eta}2}},
\]
where $B$ is the classical beta function.
\end{proof}

\begin{proof}[Proof of Lemma \ref{lemma:boundsOperatorGG}]
	%
	To prove the first item of the lemma, we consider $\eta>1$ and $\phiA \in \XcalU_{\eta}$.
	Then, for $j=1,2,3$ and $U \in \DuInn$, one has that
	\[
		\vabs{\GG_j[\phiA](U)} \leq
		\normInnU{\phiA}_{\eta} \int^{0}_{-\infty} \frac{dS}{\vabs{s+U}^{\eta}}.	
	\]
Moreover, for $s\in(-\infty,0]$, one has that $\vabs{s+U}^2 \geq s^2+\vabs{U}^2$. Then, by Lemma~\ref{lemma:boundsIntegralOperatorGG},
	\[
		\vabs{\GG_j[\phiA](U)} \leq 
		 \int^{0}_{-\infty} \frac{\normInnU{\phiA}_{\eta} }{(s^2+\vabs{U}^2)^{\frac{\eta}2}} ds =
		\frac{\normInnU{\phiA}_{\eta}}{\vabs{U}^{\eta-1}} 
		\int_{-\infty}^0 \frac{ds}{(1+s^2)^{\frac{\eta}2}}
		\leq 
		\frac{\normInnU{\phiA}_{\eta}}{\vabs{U}^{\eta-1}} 
		\frac{\sqrt{\pi} \, \Gamma\paren{\frac{\eta-1}2 }}{2 \Gamma\paren{\frac{\eta}2}}.
	\] 
	
	Next, we prove the second item.
	By the geometry of $\DuInn$ and using the Cauchy's theorem, we can change the path of integration in the integral \eqref{def:operatorGG} to $t e^{i {\sigma}}$, $t \in (-\infty, 0]$, with $0\leq {\sigma} \leq \gamma$.
	Then, for $\nu>0$ and $\phiA \in \XcalU_{\nu}$,
	\[
		{\GG_2[\phiA](U)} = 
		\int_{-\infty}^0 e^{-i t \cos{\sigma}}  e^{t\sin{\sigma}}
		\varphi(U+t e^{i {\sigma}}) e^{i{\sigma}} dt.
	\]
	Notice that $U+t e^{i \sigma} \in \DuInn$. Then, the function $f(t) = |U+ te^{i{\sigma}}|$ has a minimum at ${t_*=-\vabs{U}\cos(\arg U -{\sigma})}$ and $f(t_*)=\vabs{U\sin(\arg U -{\sigma})}$. 
    Notice that, since $\arg U \in [\frac{\pi}2,\frac{3\pi}2]$ and $t_*$ is negative only if $\arg U \in [\frac{\pi}2,\frac{\pi}2+\sigma]$, one has that
    \[
    \vabss{U+t e^{i {\sigma}}} \geq \vabs{U} \sin\paren{\frac{\pi}2-{\sigma}}=\vabs{U}\cos{\sigma}.
    \]
	%
	Therefore,
	\[
		\vabs{\GG_2[\phiA](U)} \leq \frac{\normInnU{\phiA}_{\nu}}{\vabs{U}(\cos\sigma)^{\nu}}
		\int_{-\infty}^0 e^{t\sin\sigma} dt
		= \frac{\normInnU{\phiA}_{\nu}}{\vabs{U}\sin\sigma(\cos\sigma)^{\nu}}.
	\]
	The norm for $\GG_3[\phiA]$ can be treated in the same way changing the integration path to $t e^{-i\sigma}$.
\end{proof}

\subsection{Estimates for $\FF(0)$}
\label{subsection:firstIteration}
The next proposition gives estimates for  $\FF(0)$.

\begin{proposition}\label{proposition:firstIteration}
Fix  $\kappa\geq1$. Then, the operator $\FF$ in \eqref{def:operatorFF} satisfies
\[
	\normInn{\FF_1[0]}_{\frac83}  
	\leq  \alpha_0(\kappa), 
	\qquad
	\normInn{\FF_j[0]}_{\frac43} \leq \beta_0(\kappa),\quad j=2,3 
\]
and
	\[
		\normInnTotal{\FF[0]} = \max \{ \alpha_0(\kappa), \beta_0(\kappa)\},
	\]
where $\alpha_0(\kappa)$ and $\beta_0(\kappa)$ are decreasing functions satisfying that
\[
\begin{split}
	\alpha_0(\kappa) 
	&=
	\frac{8}{243} + \frac{32\sqrt{\pi}\, \Gamma(\frac73)}{729 \, \Gamma(\frac{17}6)} \frac1{\kappa^2} \paren{
	\frac{16}{81(2 - \zeta_0(\kappa))} +
	\frac{\zeta_3(\kappa)}{8-\zeta_2(\kappa)} +
	\frac{16}{81\kappa^2} \frac{\zeta_3(\kappa)}{16-\zeta_4(\kappa)}},
	\\
	\beta_0(\kappa)
	&=
	\frac29 + 
	\frac{14\sqrt{\pi}\,\Gamma(\frac23)}{81\, \Gamma(\frac76)} +
	\paren{
	\frac{\sqrt{\pi} \, \Gamma(\frac76)}{9\,\Gamma(\frac53)} \frac1{\kappa} +
	\frac{2\sqrt{\pi}\, \Gamma(\frac53)}{81 \Gamma(\frac{13}6)}  \frac1{\kappa^2}} 
	\frac{\zeta_1(\kappa)}{2-\zeta_0(\kappa)},
\end{split}
\]
with
\[
\begin{split}
	\zeta_0(\kappa) &= \frac{16}{81\kappa^2} 
	\paren{1+3\sqrt{1+\frac{16}{81\kappa^2}}\,},
	\qquad
	\zeta_1(\kappa) = \frac{16}{81} 
	\paren{1+3\sqrt{1+\frac{16}{81\kappa^2}}\, }, 
	\\
	\zeta_2(\kappa) &= \frac{16}{81\kappa^2}
	\paren{6\sqrt{1+{\frac{16}{81\kappa^2}}}+8+{\frac{128}{81\kappa^2}}},
	\\
	\zeta_3(\kappa) &= \frac{16}{81} \paren{ \frac3{2\sqrt{1-\frac{16}{81\kappa^2}}} + 6 \sqrt{1+ \frac{16}{81\kappa^2} } + 6 + \frac{128}{81\kappa^2}},
	\\
	\zeta_4(\kappa) &=
	\frac{32}{81\kappa^2} \paren{10 + \frac{256}{81\kappa^2} + 
		\frac{2^9}{3^7 \kappa^4} 
		+ \sqrt{1+\frac{16}{81\kappa^2}}\paren{12 + \frac{368}{81\kappa^2} +\frac{2^9 7}{3^8\kappa^4}} }.
\end{split}
\]
\end{proposition}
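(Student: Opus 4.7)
The plan is to exploit the factorization $\FF[0] = \GG[\RRR[0]]$ together with the drastic simplification that $\JJ(U,0) = \frac{16}{81 U^{2}}$, which is real, positive, and depends only on $|U|$. Under this simplification, $\RRR[0]$ becomes an explicit closed-form expression involving only $U^{2/3}$ and the radicals $(1+\frac{16}{81 U^{2}})^{-1/2}$, obtained directly from the formulas in Appendix~\ref{appendix:formulasRRR}. The auxiliary functions $\zeta_0,\dots,\zeta_4$ in the statement are precisely the uniform suprema on $\DuInn$ of $(1+\JJ(U,0))^{-k/2}$ and closely related combinations entering these closed forms; because each is monotone in $|U|$ and $|U|\geq\kappa$ on $\DuInn$, the suprema are attained at $|U|=\kappa$, which is why they depend on $\kappa$ only through $16/(81\kappa^{2})$.

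For $\FF_1[0]$, I would split
\[
\RRR_1[0](U) = \tfrac{64}{729}\,U^{-11/3} + r_1(U),
\]
where the leading monomial arises by formally setting $\JJ(U,0)=0$ inside the exact expression $-\partial_U\KK|_{Z=0}/(1+g(U,0))$. The elementary antiderivative $\GG_1[U^{-11/3}](U) = -\tfrac{3}{8}U^{-8/3}$ produces the constant $\tfrac{8}{243}$ in $\alpha_0(\kappa)$. By construction $r_1$ gains two powers of decay, so $r_1 \in \XcalU_{17/3}$; Lemma~\ref{lemma:boundsOperatorGG} with $\eta=17/3$ yields $\GG_1[r_1] \in \XcalU_{14/3}$ with constant $G_{17/3} = \sqrt{\pi}\,\Gamma(\tfrac{7}{3})/(2\Gamma(\tfrac{17}{6}))$, and Lemma~\ref{lemma:sumNorms}(1) then passes to $\XcalU_{8/3}$ at the cost of a factor $\kappa^{-2}$. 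The three summands inside the square bracket of $\alpha_0$ correspond to the three natural pieces of $r_1$: the correction from retaining $\JJ(U,0)$ in $f_1(U,0)$, and the two further corrections obtained by expanding $(1+g(U,0))^{-1}$ as a geometric series, whose denominators $(2-\zeta_0)$, $(8-\zeta_2)$, $(16-\zeta_4)$ record the uniform norm of $g(U,0)$ and its powers on $\DuInn$.

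For $j=2,3$, the leading contribution of $\tilde{f}_j(U,0)$ comes from the $\pm\frac{4i}{3 U^{2/3}}$ term of $\partial_{Y,X}\JJ|_0$, producing a $\mp\frac{2}{9}U^{-4/3}$ principal part of $\RRR_j[0]$. Because the kernel of $\GG_2$ is $e^{-is}$, a single integration by parts extracts the boundary contribution $\mp\frac{2i}{9}U^{-4/3}$ of magnitude $\frac{2}{9}$, while the resulting integrand, of order $U^{-7/3}$ once combined with the subleading $\mp\frac{8}{27}U^{-5/3}$ piece of $\partial_{Y,X}\JJ|_0$, is controlled by Lemma~\ref{lemma:boundsOperatorGG} with $\eta=7/3$, yielding the $\frac{14\sqrt{\pi}\,\Gamma(2/3)}{81\,\Gamma(7/6)}$ summand. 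Further corrections from $(1+\JJ|_0)^{-3/2}$ and $(1+g(U,0))^{-1}$ contribute at orders $U^{-10/3}$ and $U^{-13/3}$ and produce the $G_{10/3}$ and $G_{13/3}$ terms with weights $\kappa^{-1}$ and $\kappa^{-2}$, combined through the geometric denominator $(2-\zeta_0)$ into $\beta_0(\kappa)$. The main obstacle is \emph{sharpness}: since these bounds feed into the contractivity argument of Section~\ref{subsection:operatorFF} and ultimately determine $\kInnS$, each splitting must be chosen so that all denominators $(N-\zeta_i)$ remain strictly positive at $\kappa=6.24$; monotonicity of $\alpha_0,\beta_0$ in $\kappa$ then follows immediately from the manifest monotonicity of each $\zeta_i$.
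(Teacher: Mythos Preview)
Your overall strategy matches the paper's proof: write $\RRR[0]$ explicitly via the formulas of Appendix~\ref{appendix:formulasRRR} at $Z=0$, peel off the leading monomial (whose $\GG$-image is computed exactly), and bound the remainder through Lemma~\ref{lemma:boundsOperatorGG} with the appropriate $\eta$; integration by parts in $\GG_2,\GG_3$ produces the constant $\tfrac29$ and shifts the rest to order $U^{-7/3}$. This skeleton is exactly what the paper carries out via Lemmas~\ref{lemma:expressionRRR0} and~\ref{lemma:constantsRRR0}.

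Two points need correction, however. First, $\JJ_0(U)=\tfrac{16}{81U^2}$ is \emph{not} real, positive, or a function of $|U|$ on $\DuInn$: for $U$ on the imaginary axis it is real and negative, and elsewhere genuinely complex. The paper therefore never uses $(1+\JJ_0)^{-1/2}\leq1$; instead each piece is bounded by the mean value theorem together with $|\JJ_0|\leq\tfrac{16}{81\kappa^2}$, which is why $\zeta_3$ contains a term $\tfrac{3}{2}(1-\tfrac{16}{81\kappa^2})^{-1/2}$. Second, the three summands in the bracket of $\alpha_0$ do not arise from a geometric expansion of $(1+g)^{-1}$. The paper instead rewrites $R_1^0$ algebraically as
\[
R_1^0=\frac{r_3(\JJ_0)}{8+r_2(\JJ_0)}+\frac{\JJ_0}{2+r_0(\JJ_0)}+\JJ_0\,\frac{r_3(\JJ_0)}{16+r_4(\JJ_0)},
\]
where each $r_i$ is an explicit algebraic function vanishing at $\JJ_0=0$; the constants $\zeta_i$ are then mean-value bounds $|r_i(\JJ_0)|\leq\zeta_i$ (or $|r_i(\JJ_0)|\,|U|^2\leq\zeta_i$), not suprema of $(1+\JJ_0)^{-k/2}$. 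The denominators $2,8,16$ are the values of those algebraic combinations at $\JJ_0=0$, not powers of $\|g\|$. Your geometric-series route would produce some valid bound, but not the specific $\alpha_0(\kappa)$ stated, and sharpness matters here for the eventual value of $\kappa^*$.
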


	%


We devote the rest of the section to prove Proposition \ref{proposition:firstIteration}. We first provide estimates for $\RRR[0]$.
\begin{lemma} \label{lemma:expressionRRR0}
Let the operator $\RRR$ be as defined in~\eqref{def:operatorRRRInner}. 
Then, 
\[
\begin{split}
	\RRR_1[0](U) &=
	\frac{2^6}{3^6  U^{\frac{11}3}} \paren{1+R_1^0(U)},
	\\
	\RRR_2[0](U) 
	&=
	\paren{-\frac{2}{9 U^{\frac43}} -
		\frac{4i}{81 U^{\frac73}} } \paren{1+R_{2,3}^0(U)},
	\\
	\RRR_3[0](U) 
	&=
	\paren{-\frac{2}{9 U^{\frac43}}+
		\frac{4i}{81 U^{\frac73}} } \paren{1+R_{2,3}^0(U)},
\end{split}
\]
where
\[
\begin{split}
	R_1^0(U) &= \frac34
	\paren{
		\frac2{3\sqrt{1+\JJ_0}(1+\sqrt{1+\JJ_0})}
		+\frac1{(1+\JJ_0)^{\frac{3}{2}}}
	}
	\paren{\frac{2(1+\JJ_0)^{\frac32}}{2(1+\JJ_0)^{\frac32}-\JJ_0}}-1,
	\\
	R_{2,3}^0(U) &= \frac2{2(1+\JJ_0)^{\frac32}-\JJ_0}-1,
\end{split}
\]
with $\JJ_0 =\JJ_0(U) = \frac{16}{81 U^2}$.
\end{lemma}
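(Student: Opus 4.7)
The plan is to evaluate $\RRR[0](U)$ directly from the definition \eqref{def:operatorRRRInner}: since $\AAA\cdot 0=0$, we have $\RRR[0]=f(U,0)/(1+g(U,0))$, so everything reduces to computing $f(U,0)$ and $g(U,0)$ from the closed-form expressions \eqref{def:hamiltonianK}, \eqref{def:hFunction} of $\KK$ and $\JJ$. First I would collect the $(W,X,Y)$--independent term of $\JJ$, which gives $\JJ_0(U)=\JJ(U,0,0,0)=\frac{16}{81 U^{2}}$, and read off the partials at $(W,X,Y)=0$:
\[
\partial_U\JJ\big|_{0}=-\tfrac{32}{81 U^{3}},\qquad \partial_W\JJ\big|_{0}=-\tfrac{16}{27U^{4/3}},\qquad \partial_Y\JJ\big|_{0}=\tfrac{4i}{3U^{2/3}}-\tfrac{8}{27U^{5/3}},\qquad \partial_X\JJ\big|_{0}=-\tfrac{4i}{3U^{2/3}}-\tfrac{8}{27U^{5/3}},
\]
where the sign flip in the imaginary part between $\partial_X$ and $\partial_Y$ comes from the term $-4i(X-Y)/(3U^{2/3})$ in $\JJ$. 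Substituting into $g=\partial_W\KK$ gives
\[
1+g(U,0)=1-\frac{\JJ_0}{2(1+\JJ_0)^{3/2}}=\frac{2(1+\JJ_0)^{3/2}-\JJ_0}{2(1+\JJ_0)^{3/2}},
\]
which is precisely the reciprocal of the factor $1+R_{2,3}^{0}$ appearing in all three formulas.

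For $\RRR_1[0]$ I would compute $-\partial_U\KK|_{0}$ by applying the chain rule to $(1+\JJ)^{-1/2}$; the result is a sum of two terms, one proportional to $((1+\JJ_0)^{-1/2}-1)U^{-5/3}$ coming from the $U$--derivative of $1/(3U^{2/3})$, and one proportional to $(1+\JJ_0)^{-3/2}U^{-11/3}$ coming from $\partial_U\JJ|_0$. The key algebraic step is the rationalization
\[
\frac{1}{\sqrt{1+\JJ_0}}-1=-\frac{\JJ_0}{\sqrt{1+\JJ_0}\bigl(1+\sqrt{1+\JJ_0}\bigr)}=-\frac{16}{81 U^{2}\sqrt{1+\JJ_0}\bigl(1+\sqrt{1+\JJ_0}\bigr)},
\]
which converts the first piece into a $U^{-11/3}$ expression and lets me factor $\frac{2^{6}}{3^{6}}U^{-11/3}$ out of the whole numerator (matching the coefficients $\tfrac{32}{729}$ and $\tfrac{16}{243}$ against $\tfrac{64}{729}$ produces the prefactor $\tfrac{3}{4}$ in $R_1^{0}$). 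Multiplying by the reciprocal of $1+g(U,0)$ computed above then yields the stated expression. For the remaining two components, the partials above give
\[
i\,\partial_Y\KK\big|_{0}=\frac{1}{(1+\JJ_0)^{3/2}}\paren{-\frac{2}{9U^{4/3}}-\frac{4i}{81U^{7/3}}},\qquad -i\,\partial_X\KK\big|_{0}=\frac{1}{(1+\JJ_0)^{3/2}}\paren{-\frac{2}{9U^{4/3}}+\frac{4i}{81U^{7/3}}},
\]
and division by $1+g(U,0)$ cancels the $(1+\JJ_0)^{3/2}$ factor, leaving precisely $2/(2(1+\JJ_0)^{3/2}-\JJ_0)=1+R_{2,3}^{0}$ as the common factor.

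There is no conceptual obstacle; the lemma is a careful bookkeeping computation. The one step that requires a touch of attention is the rationalization of $(1+\JJ_0)^{-1/2}-1$, which is what exposes the correct $U^{-11/3}$ decay in $\RRR_{1}[0]$ and reconciles the two numerical constants produced by $\partial_U\KK$ with the single prefactor $2^{6}/3^{6}=64/729$ claimed in the statement.
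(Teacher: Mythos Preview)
Your proposal is correct and follows essentially the same approach as the paper: both evaluate $\JJ$ and its first partials at $Z=0$, feed these into the closed forms for $\partial_U\KK,\partial_W\KK,\partial_X\KK,\partial_Y\KK$ at $Z=0$, and then divide by $1+g(U,0)=\bigl(2(1+\JJ_0)^{3/2}-\JJ_0\bigr)/\bigl(2(1+\JJ_0)^{3/2}\bigr)$, using exactly the rationalization $(1+\JJ_0)^{-1/2}-1=-\JJ_0/\bigl(\sqrt{1+\JJ_0}(1+\sqrt{1+\JJ_0})\bigr)$ to expose the $U^{-11/3}$ factor in the first component.
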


\begin{proof}
By the expression of $\RRR$ in \eqref{def:operatorRRRInner}, one sees that	
\begin{equation}\label{proof:operatorRRRStokes}
	\RRR[0](U) = \paren{\frac{-\partial_U \KK(U,0)}{1+\partial_W \KK(U,0)},\,
		\frac{i\partial_Y \KK(U,0)}{1+\partial_W \KK(U,0)}, \,
		\frac{-i\partial_X \KK(U,0)}{1+\partial_W \KK(U,0)} }.
\end{equation}
Notice that, by the definition of $\JJ$ in \eqref{def:hFunction}, one has that
$
	\JJ_0(U) := \JJ(U,0) = \frac{16}{81 U^2}.
$	
Moreover, using the formulae in Appendix~\ref{appendix:formulasRRR} for function $\JJ$, one sees that
  
\begin{align*}
	\partial_U \JJ(U,0) &=
	-\frac{32}{81 U^{3}},
	&
	\partial_W \JJ(U,0) &=
	-\frac{16}{27 U^{\frac{4}{3}}},
	\\
	\partial_X \JJ(U,0) &=
	-\frac{4i}{3 U^{\frac{2}{3}}}
	-\frac8{27U^{\frac{5}{3}}},
	&
	\partial_Y \JJ(U,0) &=
	\frac{4i}{3 U^{\frac{2}{3}}}
	-\frac8{27U^{\frac{5}{3}}}.
\end{align*}
  
In the same way, using the formulae in Appendix~\ref{appendix:formulasRRR}, one obtains that
\[
\begin{split}
	\partial_U \KK(U,0) &=
	-\frac{2^5}{3^6 U^{\frac{11}3}}  \frac1{\sqrt{1+\JJ_0}(1+\sqrt{1+\JJ_0})}
	-\frac{2^4}{3^5 U^{\frac{11}3}} \frac1{(1+\JJ_0)^{\frac{3}{2}}},
	\\
	&= 
	- \frac{2^4}{3^5  U^{\frac{11}3}}
	\paren{
		\frac2{3\sqrt{1+\JJ_0}(1+\sqrt{1+\JJ_0})}
		+\frac1{(1+\JJ_0)^{\frac{3}{2}}}
	},
	\\
	\partial_W \KK(U,0) &=
	-\frac8{81 U^{2}}
	\frac1{(1+\JJ_0)^{\frac32}}
	= -\frac{\JJ_0}{2(1+\JJ_0)^{\frac32}} ,
	\\
	\partial_X \KK(U,0) &=
	\paren{-\frac{2i}{9 U^{\frac43}} -
		\frac4{81 U^{\frac73}} }
	\frac1{(1+\JJ_0)^{\frac32}},
	\\
	\partial_Y \KK(U,0) &=
	\paren{\frac{2i}{9 U^{\frac43}} -
		\frac4{81 U^{\frac73}} }
	\frac1{(1+\JJ_0)^{\frac32}}.
\end{split}
\]
Finally, applying these expressions to \eqref{proof:operatorRRRStokes}, 
\[
\begin{split}
	\RRR_1[0](U) &= 
	\frac{2^4}{3^5  U^{\frac{11}3}}
	\paren{
		\frac2{3\sqrt{1+\JJ_0}(1+\sqrt{1+\JJ_0})}
		+\frac1{(1+\JJ_0)^{\frac{3}{2}}}
	}
	\paren{\frac{2(1+\JJ_0)^{\frac32}}{2(1+\JJ_0)^{\frac32}-\JJ_0}},
	\\[0.8em]
	\RRR_2[0](U) &=
	\paren{-\frac{2}{9 U^{\frac43}} -
		\frac{4i}{81 U^{\frac73}} }
	\frac2{2(1+\JJ_0)^{\frac32}-\JJ_0},
	\\[0.8em]
	\RRR_3[0](U) &=
	\paren{-\frac{2}{9 U^{\frac43}} +
		\frac{4i}{81 U^{\frac73}} }
	\frac2{2(1+\JJ_0)^{\frac32}-\JJ_0}.
\end{split}
\]
\end{proof}

Notice that, for $\vabs{U} \to \infty$, 
\[
\begin{split}
	R_{1}^0(U) &= 
	- \frac{13}{16}\JJ_0 + \frac{13}{32}\JJ_0^2  + \OO(\JJ_0^3)
	= - \frac{13}{81 U^2} + \frac{104}{6561 U^4} + \OO\paren{\frac1{U^6}},
	\\
	R_{2,3}^0(U) &= -\JJ_0+\frac58 \JJ_0^2 + \OO(\JJ_0^3)
	= - \frac{16}{81 U^2} + \frac{160}{6561 U^4} + \OO\paren{\frac1{U^6}}.
\end{split}
\]
Next lemma provides explicit estimates for these functions in terms of the parameter~$\kappa$.

\begin{lemma}\label{lemma:constantsRRR0}
For $\kappa\geq1$ and $U \in \DuInn$ (see \eqref{def:domainInnner}) one has that
\[
	\vabs{R_{1}^0(U)} \leq \frac{C_{1}^0(\kappa)}{\vabs{U}^2},
	\qquad
	\vabs{R_{2,3}^0(U)} \leq \frac{C_{2,3}^0(\kappa)}{\vabs{U}^2},
\]
where $C_1^0(\kappa)$ and $C_{2,3}^0(\kappa)$ are decreasing functions satisfying that
\[
	C_{2,3}^0(\kappa) =\frac{\zeta_1(\kappa)}{2-\zeta_0(\kappa)},
		\qquad
	C_1^0(\kappa) = \frac{\zeta_3(\kappa)}{8-\zeta_2(\kappa)} + \frac{16}{81(2 - \zeta_0(\kappa))}
	+ \frac{16}{81\kappa^2} \frac{\zeta_3(\kappa)}{16-\zeta_4(\kappa)},
\]
with
\[
\begin{split}
	\zeta_0(\kappa) &= \frac{16}{81\kappa^2} 
	\paren{3\sqrt{1+\frac{16}{81\kappa^2}}+1}\,,
	\qquad
	\zeta_1(\kappa) = \frac{16}{81} 
	\paren{3\sqrt{1+\frac{16}{81\kappa^2}}+1\, }, 
	\\
	\zeta_2(\kappa) &= \frac{16}{81\kappa^2}
	\paren{6\sqrt{1+{\frac{16}{81\kappa^2}}}+8+{\frac{128}{81\kappa^2}}},
	\\
	\zeta_3(\kappa) &= \frac{16}{81} \paren{ \frac3{2\sqrt{1-\frac{16}{81\kappa^2}}} + 6 \sqrt{1+ \frac{16}{81\kappa^2} } + 6 + \frac{128}{81\kappa^2}},
	\\
	\zeta_4(\kappa) &=
	\frac{32}{81\kappa^2} \paren{10 + \frac{256}{81\kappa^2} + 
		\frac{2^9}{3^7 \kappa^4} 
		+ \sqrt{1+\frac{16}{81\kappa^2}}\paren{12 + \frac{368}{81\kappa^2} +\frac{2^9 7}{3^8\kappa^4}} }.
\end{split}
\]
\end{lemma}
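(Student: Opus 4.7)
The plan is a direct estimate starting from the closed-form expressions in Lemma~\ref{lemma:expressionRRR0}. Throughout I write $\JJ:=\JJ_0(U)=16/(81U^2)$ and $S:=\sqrt{1+\JJ}$ (principal branch), so that for $U\in\DuInn$ one has $|\JJ|\leq 16/(81\kappa^2)$. The toolkit consists of four elementary facts: the integral representation $(1+\JJ)^{3/2}-1=\tfrac{3}{2}\JJ\int_0^1\sqrt{1+s\JJ}\,ds$ (whence $|(1+\JJ)^{3/2}-1|\leq\tfrac{3}{2}|\JJ|\sqrt{1+|\JJ|}$); the algebraic identity $S-1=\JJ/(1+S)$; the principal-branch lower bound $|1+S|\geq 1+\sqrt{1-|\JJ|}\geq 2\sqrt{1-|\JJ|}$; and the companion $|S|\leq\sqrt{1+|\JJ|}$.

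For $R_{2,3}^0$ I would rewrite
\[
R_{2,3}^0=\frac{2-\bigl(2(1+\JJ)^{3/2}-\JJ\bigr)}{2(1+\JJ)^{3/2}-\JJ}=\frac{\JJ\bigl(1-3\int_0^1\sqrt{1+s\JJ}\,ds\bigr)}{2(1+\JJ)^{3/2}-\JJ},
\]
and bound the numerator in modulus by $|\JJ|(1+3\sqrt{1+|\JJ|})\leq\zeta_1(\kappa)/|U|^2$ and the denominator, using $2(1+\JJ)^{3/2}-\JJ=2+\JJ\bigl(3\int_0^1\sqrt{1+s\JJ}\,ds-1\bigr)$ with reverse triangle, from below by $2-|\JJ|(3\sqrt{1+|\JJ|}+1)\geq 2-\zeta_0(\kappa)$. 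This immediately yields the claimed bound with $C_{2,3}^0(\kappa)=\zeta_1(\kappa)/(2-\zeta_0(\kappa))$.

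For $R_1^0$ I would first simplify algebraically. From $1+R_1^0=(2S^2+3(1+S))/(2(1+S)(2S^3-\JJ))$, using $S^2=1+\JJ$ and $S-1=\JJ/(1+S)$, one obtains the compact form
\[
R_1^0=-\frac{\JJ\bigl(6+4\JJ+(1+2\JJ)/(1+S)\bigr)}{2(1+S)(2S^3-\JJ)}.
\]
I would then split this into three contributions whose denominators are respectively $2(1+\JJ)^{3/2}-\JJ$, $2(1+S)(2S^3-\JJ)$, and $2(1+S)^2(2S^3-\JJ)$. Each denominator is treated by expanding around its $\JJ=0$ value ($2$, $8$, $16$) and invoking the toolkit: for instance, $2(1+S)(2S^3-\JJ)=8+4(S^3-1)+4(S^4-1)-2\JJ(1+S)$ together with $|S^3-1|\leq\tfrac{3}{2}|\JJ|\sqrt{1+|\JJ|}$ and $|S^4-1|\leq|\JJ|(2+|\JJ|)$ produces the lower bound $8-\zeta_2(\kappa)$; analogous expansions yield $2-\zeta_0(\kappa)$ and $16-\zeta_4(\kappa)$. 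Bounding the three numerators by the same toolkit delivers the combination $\zeta_3(\kappa)$ (assembling the contributions from $6$, $4|\JJ|$, and $1/|1+S|$) and the bare factor $16/81$ in the second summand, giving the three-term decomposition of $C_1^0(\kappa)$. Monotonicity of $C_{2,3}^0$ and $C_1^0$ in $\kappa$ is then immediate since each $\zeta_j$ is manifestly decreasing in $\kappa$.

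The main obstacle is not conceptual but one of bookkeeping: the three-way decomposition of $R_1^0$ and each reverse-triangle estimate must be carried out with the sharpest of the available elementary inequalities (e.g.\ $|1+S|\geq 2\sqrt{1-|\JJ|}$ rather than the trivial $|1+S|\geq 1$, and the integral form of $(1+\JJ)^{3/2}-1$ rather than a Taylor expansion) so that the resulting numerical constants propagate favourably through Propositions~\ref{proposition:firstIteration} and~\ref{proposition:innerExistence} and ultimately yield the advertised $\kappa^*\leq 6.24$. Any looseness here would enlarge $\kappa^*$ and hence $\rho_0$, jeopardising the computer-assisted verification of Theorem~\ref{thm:difference} since the difference $\DZ$ is exponentially small in $\rho_0$.
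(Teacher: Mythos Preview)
Your treatment of $R_{2,3}^0$ is essentially the paper's: rewrite as a single fraction, bound numerator and denominator separately via the mean value theorem (your integral representation is just the MVT in integral form), and read off $\zeta_0,\zeta_1$. Nothing to add there.

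For $R_1^0$ there is a real gap. Your compact form
\[
R_1^0=-\frac{\JJ\bigl(6+4\JJ+(1+2\JJ)/(1+S)\bigr)}{2(1+S)(2S^3-\JJ)}
\]
is correct, but it splits naturally into \emph{two} pieces, with denominators $2(1+S)(2S^3-\JJ)$ and $2(1+S)^2(2S^3-\JJ)$; you have not explained where a third piece with denominator $2S^3-\JJ$ alone comes from, and without it you cannot produce the middle summand $\tfrac{16}{81}/(2-\zeta_0)$ of $C_1^0$. More concretely, your own sample computation for the denominator $2(1+S)(2S^3-\JJ)$ does \emph{not} give $8-\zeta_2$: bounding $4(S^3-1)+4(S^4-1)-2\JJ(1+S)$ with your toolkit yields $|\JJ|\bigl(8\sqrt{1+|\JJ|}+10+4|\JJ|\bigr)$, whereas $\zeta_2$ corresponds to $|\JJ|\bigl(6\sqrt{1+|\JJ|}+8+8|\JJ|\bigr)$. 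So your decomposition, if carried through, would produce \emph{different} explicit constants, not the specific $\zeta_2,\zeta_3,\zeta_4$ in the statement.

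The paper's route avoids this by never collapsing to a single fraction. From Lemma~\ref{lemma:expressionRRR0} one has $1+R_1^0=(1+A)(1+B)$ with
\[
1+A=\frac{2S^2+3(1+S)}{4S^3(1+S)},\qquad 1+B=\frac{2S^3}{2S^3-\JJ},
\]
so $R_1^0=A+B+AB$ gives three terms whose denominators are $4S^3(1+S)$, $2S^3-\JJ$, and their product. Writing these as $8+r_2$, $2+r_0$, $16+r_4$ and the numerator of $A$ as $r_3$, then bounding each $r_j$ by the MVT, produces exactly the stated $\zeta_j$'s. If you want to keep your compact form, you would have to reverse-engineer this multiplicative splitting from it; it is simpler to work directly from the factored expression.
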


\begin{proof}
First, we compute $C_{2,3}^{0}(\kappa)$. By Lemma~\ref{lemma:expressionRRR0}, 
\[
	R_{2,3}^0(U) = \frac{2+\JJ_0-2(1+\JJ_0)^{\frac32}}{2+ \claus{
			2(1+\JJ_0)^{\frac32}-2-\JJ_0}}
	= \frac{r_1(\JJ_0)}{2 + r_0(\JJ_0)}.
\]
Then, by the mean value theorem, 
\[
\begin{split}
	\vabs{r_0(\JJ_0)} &\leq \vabs{\JJ_0}
	\sup_{\sigma\in[0,1]} 
	\vabs{3\sqrt{1+\sigma\JJ_0}-1}
	\leq \vabs{\JJ_0} 
\paren{3\sqrt{1+\vabs{\JJ_0}}+1},
	\\
	\vabs{r_1(\JJ_0)} &\leq \vabs{\JJ_0} 
	\sup_{\sigma\in[0,1]}\vabs{1-3\sqrt{1+\sigma\JJ_0}} 
	\leq \vabs{\JJ_0}\paren{1+3\sqrt{1+\vabs{\JJ_0}}}.
\end{split}
\]
Since, $U \in \DuInn$, one has that $\vabs{U} \geq \kappa$. 
Then, $\vabs{\JJ_0(U)} \leq \frac{16}{81 \kappa^2} $ and, as a result,
\[
\begin{split}
     \vabs{r_0(\JJ_0)} &\leq\frac{16}{81\kappa^2} 
     \paren{3\paren{1+\frac{16}{81\kappa^2}}^{\frac12}+1}= \zeta_0(\kappa),
     \\
     {\vabs{r_1(\JJ_0){U}^2}} &\leq  \frac{16}{81} 
     \paren{3\paren{1+\frac{16}{81\kappa^2}}^{\frac12}+1}=\zeta_1(\kappa).
\end{split}
\]
Notice that $\zeta_0(\kappa)$ and $\zeta_1(\kappa)$ are decreasing functions for positive $\kappa$ and $\zeta_0(1)<2$. Therefore, applying the triangular inequality, we denote
\[
    C^0_{2,3}(\kappa) = \frac{\zeta_1(\kappa)}{2-\zeta_0(\kappa)},
\]
which, by construction, is  a decreasing function as well.

Analogously, we compute $C_1^0(\kappa)$. By Lemma~\ref{lemma:expressionRRR0},
\[
\begin{split}
	R_1^0(U) =& \frac34
	\paren{
		\frac2{3\sqrt{1+\JJ_0}(1+\sqrt{1+\JJ_0})}
		+\frac1{(1+\JJ_0)^{\frac{3}{2}}}
	}
	\paren{\frac{2(1+\JJ_0)^{\frac32}}{2(1+\JJ_0)^{\frac32}-\JJ_0}}-1
	\\
	=&
	\frac34
	\paren{
		\frac{2(1+\JJ_0)+3(1+\sqrt{1+\JJ_0})}{3(1+\JJ_0)^{\frac32}(1+\sqrt{1+\JJ_0})}  }
	\paren{\frac{2(1+\JJ_0)^{\frac32}}{2(1+\JJ_0)^{\frac32}-\JJ_0}}-1
	\\
	=&
	\frac34\paren{ \frac43 +
		\frac{2(1+\JJ_0)+3(1+\sqrt{1+\JJ_0}) -4(1+\JJ_0)^{\frac32}(1+\sqrt{1+\JJ_0})}{3(1+\JJ_0)^{\frac32}(1+\sqrt{1+\JJ_0})}  }
	\\
	&\paren{1+\frac{2(1+\JJ_0)^{\frac32} -2(1+\JJ_0)^{\frac32}+\JJ_0}{2(1+\JJ_0)^{\frac32}-\JJ_0}}-1
\end{split}
\]
and
\[
\begin{split}
		R_1^0(U) =& 
	\paren{1 +
		\frac{1+3\sqrt{1+\JJ_0} -4(1+\JJ_0)^{\frac32} -6\JJ_0-4\JJ_0^2}{4(1+\JJ_0)^{\frac32}(1+\sqrt{1+\JJ_0})}  }
	\paren{1+\frac{\JJ_0}{2(1+\JJ_0)^{\frac32}-\JJ_0}}-1  
	\\
	=& \, 
	\frac{1+3\sqrt{1+\JJ_0} -4(1+\JJ_0)^{\frac32} -6\JJ_0-4\JJ_0^2}{4(1+\JJ_0)^{\frac32}(1+\sqrt{1+\JJ_0})}
	+\frac{\JJ_0}{2(1+\JJ_0)^{\frac32}-\JJ_0} 
	\\
	&+\JJ_0 \frac{1+3\sqrt{1+\JJ_0} -4(1+\JJ_0)^{\frac32} -6\JJ_0-4\JJ_0^2}{4(1+\JJ_0)^{\frac32}(1+\sqrt{1+\JJ_0})\paren{2(1+\JJ_0)^{\frac32}-\JJ_0}}
	\\
	=& \,
	\frac{r_3(\JJ_0)}{8+r_2(\JJ_0)} + \frac{\JJ_0}{2 + r_0(\JJ_0)}
	+ \JJ_0 \frac{r_3(\JJ_0)}{16+r_4(\JJ_0)}.
\end{split}
\]
Then, the function 
\[
r_2(\JJ_0) = 4(1+\JJ_0)^{\frac32}(1+\sqrt{1+\JJ_0})-8
	= 4(1+\JJ_0)^{\frac32} -4 + 8\JJ_0 + 4\JJ_0^2, 
 \]
by the mean value theorem and taking into account that  $\vabs{\JJ_0(U)} \leq \frac{16}{81 \kappa^2}$, satisfies
\[
\begin{split}
		\vabs{r_2(\JJ_0)} &\leq \vabs{\JJ_0} \sup_{\sigma \in [0,1]}
	\vabs{6\sqrt{1+\sigma\JJ_0}+8+8\sigma\JJ_0}
	\leq \vabs{\JJ_0} 
	\paren{6\sqrt{1+\vabs{\JJ_0}}+8+8\vabs{\JJ_0}}, 
	\\
	 &\leq \frac{16}{81\kappa^2}
	\paren{6\sqrt{1+{\frac{16}{81\kappa^2}}}+8+{\frac{128}{81\kappa^2}}}= \zeta_2(\kappa).
\end{split}
\]
Notice that,  $\vabs{\JJ_0(U)} \leq \frac{16}{81\kappa^2}<1$ for $\kappa\geq1$. Then, the functions
\[
\begin{split}	r_3(\JJ_0) =& \, 1+3\sqrt{1+\JJ_0} -4(1+\JJ_0)^{\frac32} -6\JJ_0-4\JJ_0^2,
\\
r_4(\JJ_0) =& \,  4(1+\JJ_0)^{\frac32}(1+\sqrt{1+\JJ_0})\paren{2(1+\JJ_0)^{\frac32}-\JJ_0}-16 \\
	=& \,  4(-2+5\JJ_0+4\JJ_0^2+\JJ_0^3) 
	+4(1+\JJ_0)^{\frac32}(2+3\JJ_0+2\JJ_0^2),
\end{split}
\]
 satisfy
\[
\begin{split}	
	\vabs{r_3(\JJ_0)} &\leq \vabs{\JJ_0} \sup_{\sigma\in[0,1]}
	\vabs{\frac3{2\sqrt{1+\sigma\JJ_0}} - 6\sqrt{1+\sigma\JJ_0} -6 - 8\sigma \JJ_0}
	\\
	&\leq \vabs{\JJ_0} \paren{ \frac3{2\sqrt{1-\vabs{\JJ_0}}} + 6 \sqrt{1+\vabs{\JJ_0}} + 6 + 8\vabs{\JJ_0}}, 
	\\
	\vabs{r_3(\JJ_0){U}^2} &\leq \frac{16}{81} \paren{ \frac3{2\sqrt{1-\frac{16}{81\kappa^2}}} + 6 \sqrt{1+ \frac{16}{81\kappa^2} } + 6 + \frac{128}{81\kappa^2}}= \zeta_3(\kappa)
\end{split}	
\]
and
\[
\begin{split}	
		\vabs{r_4(\JJ_0)} &\leq 4\vabs{\JJ_0} \sup_{\sigma \in [0,1]}
	\vabs{5+8\sigma\JJ_0+3\sigma^2\JJ_0^2+
		\frac12\sqrt{1+\sigma\JJ_0}(12+23\sigma\JJ_0+14\sigma^2\JJ_0^2)} 
	\\
	&\leq  2\vabs{\JJ_0} \paren{10 + 16 \vabs{\JJ_0} + 6\vabs{\JJ_0}^2 
		+ \sqrt{1+\vabs{\JJ_0}}\paren{12 + 23\vabs{\JJ_0} +14\vabs{\JJ_0}^2} }
	\\
	&\leq 
	\frac{32}{81\kappa^2} \paren{10 + \frac{256}{81\kappa^2} + 
		\frac{2^9}{3^7 \kappa^4} 
		+ \sqrt{1+\frac{16}{81\kappa^2}}\paren{12 + \frac{368}{81\kappa^2} +\frac{2^9 7}{3^8\kappa^4}} }=\zeta_4(\kappa).
  \end{split}	
\]
Notice that $\zeta_0(\kappa), \zeta_2(\kappa)$ and $\zeta_4(\kappa)$ are decreasing functions for positive $\kappa$ and one can easily checked  that $\zeta_0(1)<2$, $\zeta_2(1)<8$ and $\zeta_4(1)<16$. Then, we denote
\[
	C_1^0(\kappa) = \frac{\zeta_3(\kappa)}{8-\zeta_2(\kappa)} + \frac{16}{81}\frac1{2 - \zeta_0(\kappa)}
	+ \frac{16}{81\kappa^2} \frac{\zeta_3(\kappa)}{16-\zeta_4(\kappa)},
\]
which, by construction, is a decreasing function for $\kappa\geq 1$.
\end{proof}


\begin{proof}[Proof of Proposition~\ref{proposition:firstIteration}]
Let us recall that $\FF=\GG\circ\RRR$. Then, by Lemmas~\ref{lemma:boundsIntegralOperatorGG}, \ref{lemma:expressionRRR0} and~\ref{lemma:constantsRRR0}, and proceeding as in the proof of Lemma~\ref{lemma:boundsOperatorGG},
\[
\begin{split}	
	\vabs{\FF_1[0](U)} 
	&= \vabs{\int_{-\infty}^0 \RRR_1[0](s+U) ds} 
	\leq \frac{2^6}{3^6} \vabs{ \int_{-\infty}^0 \frac{ds}{(s+U)^{\frac{11}3}}}
	+ \frac{2^6}{3^6} \vabs{ \int_{-\infty}^0 \frac{R_1(s+U)}{(s+U)^{\frac{11}3}} ds} 
	\\[0.8em]
	&\leq  \frac{2^6}{3^6} \vabs{\frac3{8 U^{\frac83}}}
	+\frac{2^6}{3^6}  C_{1}^0(\kappa) \int_{-\infty}^0 \frac{ds}{\vabs{s+U}^{\frac{17}3}}
	\leq \frac{2^3}{3^5 \vabs{U}^{\frac83}} 
	+\frac{2^6}{3^6}  C_{1}^0(\kappa) \int_{-\infty}^0 \frac{ds}{(s^2+\vabs{U}^2)^{\frac{17}6}}
	\\[0.8em]
	&\leq  \frac{8}{3^5 \vabs{U}^{\frac83}} + 
	\frac{2^6  C_{1}^0(\kappa) }{3^6 \vabs{U}^{\frac{14}3}}  \int_{-\infty}^0 
	\frac{ds}{(1+s^2)^{\frac{17}6}}
	\leq  \frac{8}{3^5 \vabs{U}^{\frac83}} + 
	\frac{2^6  C_{1}^0(\kappa) }{3^6 \vabs{U}^{\frac{14}3}}  \frac{\sqrt{\pi}\, \Gamma(\frac73)}{2 \Gamma(\frac{17}6)}.
 \end{split}	
\]
Then
\[
\begin{split}
	\normInn{\FF_1[0]}_{\frac83}  
	&\leq  \frac{8}{243} + \frac{32\sqrt{\pi}\, \Gamma(\frac73)}{729 \, \Gamma(\frac{17}6)} \frac{C_1^0(\kappa)}{\kappa^2}.
\end{split}
\]
Following the same ideas, one has that 
\[
\begin{split}
	&\vabs{\FF_2[0](U)} 
	= \vabs{\int_{-\infty}^0 e^{-is} \RRR_2[0](s+U) ds} 
	\\
	&\leq 
	\frac29 \vabs{\int_{-\infty}^0 \frac{e^{-is}}{(s+U)^{\frac43}} ds } +
	\frac29 \vabs{\int_{-\infty}^0 \frac{e^{-is}R_{2,3}(s+U)}{(s+U)^{\frac43}} ds }
	+ \frac4{81} \vabs{\int_{-\infty}^0 \frac{e^{-is}(1+R_{2,3}(s+U))}{(s+U)^{\frac73}} ds }.
\end{split}
\]
Notice that the first integral in the inequality satisfies that
\[
\begin{split}
	\int_{-\infty}^0 \frac{e^{-is}}{(s+U)^{\frac43}} ds  
	&=
	i\boxClaus{\frac{e^{-is}}{(s+U)^{\frac43}}}_{s=-\infty}^{s=0}
	+ \frac{4i}3 \int_{-\infty}^0 \frac{e^{-is}}{(s+U)^{\frac73}} ds\\
	&= \frac{i}{U^{\frac43}} 
	+ \frac{4i}3 \int_{-\infty}^0 \frac{e^{-is}}{(s+U)^{\frac73}} ds.
\end{split}
\]
Then, proceeding as in the proof of Lemma~\ref{lemma:boundsOperatorGG},
\[
\begin{split}
	\vabs{\FF_2[0](U)}  \leq&
	\frac{2}{9\vabs{U}^{\frac43}} +
	\frac{8}{27} \int_{-\infty}^0 \frac{ds}{\vabs{s+U}^{\frac73}} +
	\frac{2 C_{2,3}^0(\kappa)}9 \int_{-\infty}^0 \frac{ds}{\vabs{s+U}^{\frac{10}3}} \\
	&+ \frac{4}{81} \int_{-\infty}^0 \frac{ds}{\vabs{s+U}^{\frac{7}3}}
	+ \frac{4 C_{2,3}^0(\kappa)}{81} \int_{-\infty}^0 \frac{ds}{\vabs{s+U}^{\frac{13}3}}
	\\
	\leq &\frac{2}{9\vabs{U}^{\frac43}} +
	\frac{28}{81 \vabs{U}^{\frac43}} \int_{-\infty}^0 \frac{ds}{\paren{1+s^2}^{\frac76}}
	+
	\frac{2 C_{2,3}^0(\kappa)}{9 \vabs{U}^{\frac73}} \int_{-\infty}^0 \frac{ds}{\paren{1+s^2}^{\frac{5}3}}
	\\
	&+ \frac{4 C_{2,3}^0(\kappa)}{81 \vabs{U}^{\frac{10}3}}
	\int_{-\infty}^0 \frac{ds}{\paren{1+s^2}^{\frac{13}6}} \\
	= &\paren{\frac29+\frac{14\sqrt{\pi}\,\Gamma(\frac23)}{81\Gamma(\frac76)}} \frac1{\vabs{U}^{\frac43}}
	+
	\frac{\sqrt{\pi} \, \Gamma(\frac76) C_{2,3}^0(\kappa)}{9\Gamma(\frac53)} \frac1{\vabs{U}^{\frac73}}
	+ 
	\frac{2\sqrt{\pi}\, \Gamma(\frac53) C_{2,3}^0(\kappa)}{81 \Gamma(\frac{13}6)}  \frac1{\vabs{U}^{\frac{10}3}}.
\end{split}
\]
Then
\[
\begin{split}
	\normInn{\FF_2[0]}_{\frac43} &\leq
	\frac29+
	\frac{14\sqrt{\pi}\,\Gamma(\frac23)}{81\, \Gamma(\frac76)} +
	\frac{\sqrt{\pi} \, \Gamma(\frac76) C_{2,3}^0(\kappa)}{9\,\Gamma(\frac53)} \frac1{\kappa} +
	\frac{2\sqrt{\pi}\, \Gamma(\frac53) C_{2,3}^0(\kappa)}{81 \Gamma(\frac{13}6)}  \frac1{\kappa^2} .
\end{split}
\]
An analogous result holds for $\FF_3[0]$.
\end{proof}


\subsection{Estimates for the derivatives of \texorpdfstring{$\RRR$}{R}}
\label{subsection:LipschitzRRR}

Let $\varrho_1,\varrho_2>1$ and denote
\[
	\alpha(\kappa,\varrho_1) =  \alpha_0(\kappa) \varrho_1, 
	\qquad
	\beta(\kappa,\varrho_2) = \beta_0(\kappa) \varrho_2,
\]
where functions $\alpha_0$ and $\beta_0$ as given in Proposition~\ref{proposition:firstIteration}. 
Notice that, for $\kappa\geq 1$,  the functions $\alpha(\kappa,\varrho_1)$ and $\beta(\kappa,\varrho_2)$ are positive functions decreasing in $\kappa$ and increasing in $\varrho_1$ and $\varrho_2$, respectively.

Then, we consider the closed set defined by
\[
\rectangle= \{ (\Wu,\Xu,\Yu) \in \XcalUTotal:
\normInn{\Wu}_{\frac83} \leq \alpha(\kappa,\varrho_1),  %
\normInn{\Xu}_{\frac43}, \normInn{\Yu}_{\frac43} \leq  \beta(\kappa,\varrho_2) \},
\]
where $\XcalUTotal$ and $\normInnTotal{\cdot}$ were defined in~\eqref{def:XcalUTotal} and~\eqref{def:norminnTotal}, respectively. 
The next two lemmas give estimates for the functions $\JJ$ and $\KK$, introduced in \eqref{def:hFunction} and \eqref{def:hamiltonianK} respectively,  and their derivatives for functions in the closed domain $\rectangle$.

In the following, we omit the dependence of certain functions in $\varrho_1,\varrho_2$ and $\kappa$ to simplify notation.

\begin{lemma}\label{lemma:LipschitzJJ}
Let $\kappa\geq 1$ and $\varrho_1, \varrho_2 > 1$  and define the functions 
\[
\begin{split}
\xi_0 &= 
\frac{16 + 216\beta}{81}
+ \frac{16\beta}{27\kappa}
+ \frac{16\alpha+48\beta^2}{27\kappa^2}
+ \frac{8\alpha\beta}{9\kappa^3}
+ \frac{4 \al^2 }{9 \kappa^4},
\\
\xi_1 &=
\frac{32+ 144\beta}{81}
+\frac{80 \beta}{81 \kappa}
+ \frac{64\al+192\beta^2}{81 \kappa^2}
+\frac{8\al\beta}{9 \kappa^3} 
+ \frac{8 \al^{2}}{27 \kappa^4},
\\
\xi_2 &=
\frac{16}{27}
+\frac{8 \beta}{9 \kappa}
+\frac{8 \al}{9 \kappa^2},
\\
\xi_3 &=
\frac43
+ \frac{8}{27 \kappa}
+\frac{16 \beta}{9 \kappa^2}
+ \frac{4\al}{9\kappa^3},
\\
\xi_4 &=
\frac{64}{81}
+\frac{8\beta}{9 \kappa}
+\frac{16 \alpha}{27 \kappa^2}, 
\\
\xi_5 &=
\frac89
+\frac{40}{81 \kappa}
+\frac{64 \beta}{27 \kappa^2}
+\frac{4 \alpha}{9 \kappa^3},
\end{split}
\]
which are positive functions decreasing in $\kappa$ and increasing in $\varrho_1$ and $\varrho_2$.

Then, for $\Zu \in \rectangle$, the following estimate for $\JJ(U,\Zu)$ in \eqref{def:hFunction} holds,
\[
\normInn{\JJ(\cdot,\Zu)}_2 
\leq \xi_0.
\]
Moreover, its first and second derivatives satisfy
\[
\begin{split}
\normInn{\partial_U \JJ(\cdot,\Zu)}_3
\leq \xi_1,
\quad
\normInn{\partial_W \JJ(\cdot,\Zu)}_{\frac43}
\leq \xi_2,
\quad
\normInn{\partial_X \JJ(\cdot,\Zu)}_{\frac23},
\normInn{\partial_Y \JJ(\cdot,\Zu)}_{\frac23},
\leq \xi_3,
\end{split}
\]
and 
  
\begin{align*}
&\normInn{\partial_{UW} \JJ(\cdot,\Zu)}_{\frac73}
\leq \xi_4,
&
&\normInn{\partial_{UX} \JJ(\cdot,\Zu)}_{\frac53},
\normInn{\partial_{UY} \JJ(\cdot,\Zu)}_{\frac53}
\leq \xi_5,
\\
&\normInn{\partial^2_{W} \JJ(\cdot,\Zu)}_{\frac23}
=\frac89,
&
&\normInn{\partial_{WX} \JJ(\cdot,\Zu)}_{1},
\normInn{\partial_{WY} \JJ(\cdot,\Zu)}_{1}
= \frac49,
\\
&\normInn{\partial_{XY} \JJ(\cdot,\Zu)}_{\frac43}
=\frac{10}9,
&
&\normInn{\partial^2_{X} \JJ(\cdot,\Zu)}_{\frac43},
\normInn{\partial^2_{Y} \JJ(\cdot,\Zu)}_{\frac43}
= \frac23.
\end{align*}
  
\end{lemma}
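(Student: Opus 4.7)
The proof is essentially a bookkeeping exercise: the function $\JJ$ in \eqref{def:hFunction} is a polynomial of total degree at most $2$ in $(W,X,Y)$ whose coefficients are explicit (negative) powers of $U$. Therefore both $\JJ$ itself and all its first and second partial derivatives admit a closed form as finite sums of monomials of the type $c\, U^{-a} W^{b_1} X^{b_2} Y^{b_3}$. The plan is to compute each of these sums once and for all, and then estimate each monomial separately in the norms of the Banach spaces $\XcalU_\nu$ using Lemma \ref{lemma:sumNorms} and the bounds available on the rectangle $\rectangle$, namely $\normInn{W}_{8/3}\leq \alpha$ and $\normInn{X}_{4/3},\normInn{Y}_{4/3}\leq \beta$, together with the standing inequality $|U|\geq \kappa$ valid on $\DuInn$.

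Concretely, I would proceed as follows. First I would write $\JJ$ in the form displayed in \eqref{def:hFunction} and bound $\normInn{\JJ(\cdot,Z)}_2$ by the sum of the norms of its seven monomials; each summand contributes exactly one of the terms in the expression of $\xi_0$ (for instance, $\frac{16}{81 U^2}$ produces the constant $16/81$, the term $-\frac{4i(X-Y)}{3U^{2/3}}$ produces $\frac{216\beta}{81}=\frac{8\beta}{3}$ after multiplying by $U^2$ and applying item (2) of Lemma \ref{lemma:sumNorms}, and the purely quadratic terms in $X,Y$ combine to give the $\beta^2/\kappa^2$ contribution). Second, I would compute the explicit expressions for $\partial_U\JJ$, $\partial_W\JJ$, $\partial_X\JJ$ and $\partial_Y\JJ$ (using the formulas collected in Appendix~\ref{appendix:formulasRRR}) and repeat the same term-by-term estimate in the appropriate weighted norms, producing $\xi_1,\ldots,\xi_5$. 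Third, since the second partial derivatives $\partial^2_W\JJ$, $\partial_{WX}\JJ$, $\partial_{WY}\JJ$, $\partial^2_X\JJ$, $\partial^2_Y\JJ$ and $\partial_{XY}\JJ$ are independent of $(W,X,Y)$ (each equals a constant times a negative power of $U$), the corresponding four estimates become equalities which are read off directly from the monomial $\frac{16}{81 U^2}$-type terms of $\JJ$.

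The monotonicity claims for $\xi_0,\ldots,\xi_5$ in $\kappa$, $\varrho_1$, $\varrho_2$ are immediate from the explicit formulas, since every term is a nonnegative power of $\alpha=\alpha_0(\kappa)\varrho_1$, a nonnegative power of $\beta=\beta_0(\kappa)\varrho_2$, and a nonpositive power of $\kappa$, and $\alpha_0,\beta_0$ are themselves decreasing in $\kappa$ by Proposition \ref{proposition:firstIteration}.

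There is no genuine obstacle here, only the need for care. The main practical difficulty is purely combinatorial: keeping track of the exact coefficient, the power of $U$ and the powers of $W,X,Y$ in every one of the roughly thirty monomials that appear across $\JJ$ and its first two derivatives, and matching them against the corresponding entries of $\xi_0,\ldots,\xi_5$. Since the values of these constants will feed into the contractivity threshold $\kappa^*=6.24$ of Theorem \ref{thm:FixedPointQuantitative}, I would double-check each bound by substituting $Z=0$ (recovering the $R^0$-constants of Lemma \ref{lemma:constantsRRR0}) and by checking the leading order as $\kappa\to\infty$ against the rough bounds already used in~\cite{articleInner}.
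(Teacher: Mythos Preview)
Your proposal is correct and follows essentially the same approach as the paper: the paper's proof bounds $|\JJ(U,Z^{\mathrm u})|$ and each of its first and second partial derivatives pointwise by summing the absolute values of all monomials, substitutes $|W|\le\alpha|U|^{-8/3}$ and $|X|,|Y|\le\beta|U|^{-4/3}$, factors out the target power of $|U|$, and replaces the residual negative powers of $|U|$ by the corresponding powers of $\kappa$; the monotonicity in $\kappa,\varrho_1,\varrho_2$ is then read off from the explicit formulas exactly as you describe.
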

\begin{proof}
The estimate for $\JJ$ is a consequence of the following,
\[
\begin{split}
\vabs{\JJ(U,\Zu)} \leq& \,
\frac{4 \al^2 }{9 \vabs{U}^6}
+ \frac{16 \al}{27 \vabs{U}^4}
+ \frac{16}{81\vabs{U}^2}
+ \frac{8 \alpha \beta}{9\vabs{U}^{5}}
+ \frac{16\beta}{27 \vabs{U}^{3}}
+ \frac{8\beta}{3\vabs{U}^2}
+ \frac{2 \beta^2}{3 \vabs{U}^4}
+ \frac{10 \beta^2}{9 \vabs{U}^4}
\\
=& \,  \frac1{\vabs{U}^2}
\paren{
\frac{16 + 216\beta}{81}
+\frac{16\beta}{27\kappa}
+ \frac{16\alpha+{48}\beta^2}{27\kappa^2}
+ \frac{8\alpha\beta}{9\kappa^3}
+ \frac{4 \al^2 }{9 \kappa^4} }
:= \,
\frac{\xi_0}{\vabs{U}^2}.
\end{split}
\]
The first derivatives can be bounded by
\[
\begin{split}
\vabs{\partial_U \JJ(U,\Zu)} &\leq
  \frac{8 \al^{2}}{27 \vabs{U}^7} 
+ \frac{64 \al}{81 \vabs{U}^5}
+\frac{32}{81 \vabs{U}^{3}}
+\frac{8\al\beta}{9 \vabs{U}^{6}} 
+\frac{80 \beta}{81 \vabs{U}^4}
+\frac{16\beta}{9 \vabs{U}^3}
+\frac{8 \beta^2}{9 \vabs{U}^5}
+\frac{40 \beta^2}{27 \vabs{U}^5}
\\
&=
\frac1{\vabs{U}^3}\paren{
\frac{32+ 144\beta}{81}
+\frac{80 \beta}{81 \kappa}
+ \frac{64\al+192\beta^2}{81 \kappa^2}
+\frac{8\al\beta}{9 \kappa^3} 
+ \frac{8 \al^{2}}{27 \kappa^4} 
} := \frac{\xi_1}{\vabs{U}^3},
\\
\vabs{\partial_W \JJ(U,\Zu)} &\leq 
\frac{8 \al}{9 \vabs{U}^{\frac{10}{3}}}
+\frac{16}{27 \vabs{U}^{\frac{4}{3}}}
+\frac{8 \beta}{9 \vabs{U}^{\frac73}}
=
\frac1{\vabs{U}^{\frac43}}\paren{
\frac{16}{27}
+\frac{8 \beta}{9 \kappa}
+\frac{8 \al}{9 \kappa^2}
} := \frac{\xi_2}{\vabs{U}^{\frac43}},
\\
\vabs{\partial_X \JJ(U,\Zu)} &\leq
\frac{4\al}{9\vabs{U}^{\frac{11}3}} 
+ \frac{8}{27 \vabs{U}^{\frac53}}
+ \frac{4}{3 \vabs{U}^{\frac23}}
+\frac{2 \beta}{3 \vabs{U}^{\frac83}}
+\frac{10 \beta}{9 \vabs{U}^{\frac83}}
\\ &=
\frac1{\vabs{U}^{\frac23}} \paren{
\frac43
+ \frac{8}{27 \kappa}
+\frac{16 \beta}{9 \kappa^2}
+ \frac{4\al}{9\kappa^3} 
}
:= \frac{\xi_3}{\vabs{U}^{\frac23}},
\\
\vabs{\partial_Y \JJ(U,\Zu)} &\leq
\frac{\xi_3}{\vabs{U}^{\frac23}},
\end{split}
\]
and the second derivatives by
\[
\begin{split}
\vabs{\partial_{UW} \JJ(U,\Zu)} &\leq 
\frac{16 \alpha}{27 \vabs{U}^{\frac{13}3}}
+\frac{64}{81 \vabs{U}^{\frac73}}
+\frac{8\beta}{9 \vabs{U}^{\frac{10}3}}
=
\frac1{\vabs{U}^{\frac73}} \paren{
\frac{64}{81}
+\frac{8\beta}{9 \kappa}
+\frac{16 \alpha}{27 \kappa^2}}
:= \frac{\xi_4}{\vabs{U}^{\frac73}} ,
\\
\vabs{\partial_{UX} \JJ(U,\Zu)} &\leq
\frac{4 \alpha}{9 \vabs{U}^{\frac{14}3}}
+\frac{40 }{81 \vabs{U}^{\frac83}}
+\frac{8 }{9 \vabs{U}^{\frac53}}
+\frac{8 \beta}{9 \vabs{U}^{\frac{11}3}}
+\frac{40 \beta}{27 \vabs{U}^{\frac{11}3}}
\\ &=
\frac1{\vabs{U}^{\frac53}} \paren{
\frac89
+\frac{40}{81 \kappa}
+\frac{64 \beta}{27 \kappa^2}
+\frac{4 \alpha}{9 \kappa^3}
} 
:= \frac{\xi_5}{\vabs{U}^{\frac53}}, 
\\
\vabs{\partial_{UY} \JJ(U,\Zu)} &\leq 
\frac{\xi_5}{\vabs{U}^{\frac53}},
\end{split}
\]

\begin{align*}
\vabs{\partial_{W}^2 \JJ(U,\Zu)} &=
\frac{8}{9 \vabs{U}^{\frac23}}, &
\quad
\vabs{\partial_{WX} \JJ(U,\Zu)} &= 
\frac{4}{9 \vabs{U}}, 
&\quad
\vabs{\partial_{WY} \JJ(U,\Zu)} &= 
\frac{4}{9 \vabs{U}}, 
\\
\vabs{\partial_{X}^2 \JJ(U,\Zu)} &=
\frac{2}{3 \vabs{U}^{\frac43}}, 
&\quad
\vabs{\partial_{XY} \JJ(U,\Zu)} &=
\frac{10}{9 \vabs{U}^{\frac{4}{3}}}, 
&
\vabs{\partial_{Y}^2 \JJ(U,\Zu)} &=
\frac{2}{3 \vabs{U}^{\frac{4}{3}}}. 
\end{align*}
  
Finally notice that, $\kappa\geq 1$ and $\varrho_1, \varrho_2 > 1$,  by Proposition~\ref{proposition:firstIteration}$, \alpha(\kappa,\varrho_1)=\varrho_1\alpha_0(\kappa)$ and $\beta(\kappa,\varrho_2)=\varrho_2\beta_0(\kappa)$ are positive functions decreasing in $\kappa$ and increasing in $\varrho_1$ and $\varrho_2$.
Therefore, the auxiliary functions $\xi_0,..,\xi_5$ are as well.
\end{proof}

\begin{lemma}
Let	$\varrho_1 \in (1,60)$, $\varrho_2 \in (1,3)$ and $\kappa\geq3$. 
Then, for $\Zu \in \rectangle$, the derivatives of the Hamiltonian $\KK$ in \eqref{def:hamiltonianK} satisfy
\[
\begin{split}
\normInn{\partial_U \KK(\cdot,\Zu)}_{\frac{11}3} \leq \eta_1,
\quad
\normInn{\partial_W \KK(\cdot,\Zu)}_2 \leq \eta_2,
\quad
\normInn{\partial_X \KK(\cdot,\Zu)}_{\frac43}, 
\normInn{\partial_Y \KK(\cdot,\Zu)}_{\frac43} \leq \eta_3 
\end{split}
\]
and 
  
\begin{align*}
\normInn{\partial_{UW}\KK(\cdot,\Zu)}_3 &\leq \eta_4, 
&
\normInn{\partial_{UX}\KK(\cdot,\Zu)}_{\frac73},
\normInn{\partial_{UY}\KK(\cdot,\Zu)}_{\frac73} &\leq \eta_5,
\\
\normInn{\partial^2_{W}\KK(\cdot,\Zu)}_{-\frac23} &\leq \eta_6, 
&
\normInn{\partial_{WX}\KK(\cdot,\Zu)}_{\frac53},
\normInn{\partial_{WY}\KK(\cdot,\Zu)}_{\frac53} &\leq \eta_7,
\\
\normInn{\partial_{XY}\KK(\cdot,\Zu)}_{2} &\leq \eta_8, 
&
\normInn{\partial^2_{X}\KK(\cdot,\Zu)}_{2},
\normInn{\partial^2_{Y}\KK(\cdot,\Zu)}_{2} &\leq \eta_9,
\end{align*}
  
where
  
\begin{align*}
	\eta_0 &= \sqrt{1 - \frac{\xi_0}{\kappa^2}}, &
	\eta_1 &= \frac{4 \xi_0}{9 \eta_0 (4-\xi_0\eta_0^{-1}\kappa^{-2})}
	+ \frac{\xi_1}{6 \eta_0^3}
	+ \frac{\alpha^2}{2 \kappa^2}, \\
	\eta_2 &= \frac{3\al}{2} + \frac{\xi_2}{6 \eta_0^3}, &
	\eta_3 &= \frac{\xi_3}{ 6\eta_0^3}, \\
	\eta_4 &= \al
	+\frac{\xi_2}{9 \eta_0^3}
	+\frac{\xi_4}{6 \eta_0^3}
	+\frac{\xi_1 \xi_2}{4 \eta_0^5 \kappa^2}, &
	\eta_5 &= 
	\frac{\xi_3}{9 \eta_0^3} 
	+\frac{\xi_5}{6 \eta_0^3} 
	+\frac{\xi_1\xi_3}{4 \eta_0^5 \kappa^2}, \\
	\eta_6 &= \frac32
	+ \frac{4}{27 \eta_0^3 \kappa^2}
	+ \frac{\xi_2^2}{4 \eta_0^5 \kappa^4}, &
	\eta_7 &= \frac{2}{27 \eta_0^3}
	+\frac{\xi_2 \xi_3}{4 \eta_0^5 \kappa}, \\
	\eta_8 &= \frac{5}{27 \eta_0^3}
	+ \frac{\xi_3^2}{4 \eta_0^5} , &
	\eta_9 &= \frac{1}{9 \eta_0^3}
	+ \frac{\xi_3^2}{4 \eta_0^5}.
\end{align*}
  
Moreover, for $\kappa\geq 3$, $\varrho_1 \in (1,60)$ and $\varrho_2 \in (1,3)$, the  functions $\eta_1,..,\eta_9$ are positive functions decreasing in $\kappa$ and increasing in $\varrho_1$ and $\varrho_2$.
\end{lemma}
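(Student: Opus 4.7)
The plan is to apply the chain rule to the explicit expression
\[
\KK(U,W,X,Y)=-\tfrac{3}{4}U^{\frac{2}{3}}W^{2}-\tfrac{1}{3U^{\frac{2}{3}}}\bigl((1+\JJ)^{-\frac{1}{2}}-1\bigr),
\]
expressing each of the nine derivatives in the lemma as a finite sum of terms of the form $U^{a}W^{b}(1+\JJ)^{-k/2}\cdot\partial^{*}\JJ\cdot\partial^{*}\JJ$ with $k\in\{0,1,3,5\}$. The bounds on these building blocks come from the previous lemma (for $\JJ$ and its derivatives up to second order) and from the definition of $\rectangle$ (for $\Wu,\Xu,\Yu$); they are then multiplied and added with the product/inclusion rules of Lemma~\ref{lemma:sumNorms} to obtain the claimed $\eta_i$.

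The decisive analytic input, common to all the estimates, is the control of powers of $1+\JJ$ on $\DuInn$. Since Lemma~\ref{lemma:LipschitzJJ} gives $|\JJ(U,\Zu)|\le \xi_0/|U|^{2}\le \xi_0/\kappa^{2}$, and the range restrictions $\varrho_1\in(1,60)$, $\varrho_2\in(1,3)$, $\kappa\ge 3$ ensure $\xi_0/\kappa^{2}<1$, we obtain on the principal branch
\[
|1+\JJ|\ge 1-\xi_0/\kappa^{2}=\eta_0^{2},\qquad |\sqrt{1+\JJ}|\ge \eta_0,\qquad |(1+\JJ)^{-k/2}|\le \eta_0^{-k}.
\]
The more delicate ingredient, needed only for $\eta_1$, is a refined bound for $(1+\JJ)^{-1/2}-1$. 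Starting from the identity $(1+\JJ)^{-1/2}-1=-\JJ/\bigl(\sqrt{1+\JJ}\,(1+\sqrt{1+\JJ})\bigr)$ and using the integral representation $\sqrt{1+\JJ}-1=\int_0^1 \JJ/(2\sqrt{1+t\JJ})\,dt$ we get $|\sqrt{1+\JJ}-1|\le |\JJ|/(2\eta_0)$, hence
\[
|1+\sqrt{1+\JJ}|\ge 2-\tfrac{|\JJ|}{2\eta_0}\ge 2-\tfrac{\xi_0}{2\eta_0\kappa^{2}},\qquad |(1+\JJ)^{-\frac{1}{2}}-1|\le \tfrac{2|\JJ|}{\eta_0(4-\xi_0/(\eta_0\kappa^{2}))}.
\]

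With these ingredients the verification of each bound is essentially mechanical. For example, $\partial_W\KK=-\tfrac{3}{2}U^{2/3}W+\tfrac{1}{6U^{2/3}}\,\partial_W\JJ/(1+\JJ)^{3/2}$ yields $\normInn{\partial_W\KK}_{2}\le \tfrac{3\al}{2}+\tfrac{\xi_2}{6\eta_0^{3}}=\eta_2$, and similarly one obtains $\eta_3$, $\eta_6$, $\eta_7$, $\eta_8$, $\eta_9$ without needing the refined bound. The bound for $\partial_U\KK$ splits into three pieces: the term $-\tfrac12 U^{-1/3}W^{2}$ gives the contribution $\al^{2}/(2\kappa^{2})$, the term $\tfrac{1}{6U^{2/3}}\partial_U\JJ/(1+\JJ)^{3/2}$ gives $\xi_1/(6\eta_0^{3})$, and the term $\tfrac{2}{9U^{5/3}}((1+\JJ)^{-1/2}-1)$, using the refined bound above, gives exactly $4\xi_0/(9\eta_0(4-\xi_0/(\eta_0\kappa^{2})))$, yielding $\eta_1$. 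The mixed second derivatives $\eta_4,\eta_5$ involve the most terms (from differentiating both $\partial^{*}\JJ$ and $(1+\JJ)^{-3/2}$), but each is bounded in the same fashion, producing the four respectively three summands that appear in the statement.

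The monotonicity claim is inherited from the building blocks. By Lemma~\ref{lemma:LipschitzJJ} and Proposition~\ref{proposition:firstIteration}, $\al$, $\bet$ and $\xi_0,\dots,\xi_5$ are all positive, decreasing in $\kappa$ and increasing in $\varrho_1,\varrho_2$; since $\eta_0^{-1}=(1-\xi_0/\kappa^{2})^{-1/2}$ is a composition of monotone positive functions, it is as well, and then each $\eta_i$ is a sum/product of such quantities. The only nontrivial point is to check that the denominator $4-\xi_0/(\eta_0\kappa^{2})$ entering $\eta_1$ stays positive on $\kappa\ge 3$, $\varrho_1\in(1,60)$, $\varrho_2\in(1,3)$, which is a direct numerical verification using the closed-form expressions for $\xi_0$ and $\eta_0$. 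The hardest part of the proof is not conceptual but purely combinatorial: keeping track of all terms produced by second differentiation of $(1+\JJ)^{-1/2}$ and regrouping them so that the bounds match the specific constants $\eta_i$ advertised in the statement—any loose application of the triangle inequality would spoil the precision needed later for the value $\kappa^{*}=6.24$ of Theorem~\ref{thm:FixedPointQuantitative}.
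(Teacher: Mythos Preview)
Your proposal is correct and follows essentially the same approach as the paper: both proofs establish the lower bound $|\sqrt{1+\JJ}|\ge\eta_0$ from $|\JJ|\le\xi_0/\kappa^2<1$, derive the refined bound $|1+\sqrt{1+\JJ}|\ge 2-\xi_0/(2\eta_0\kappa^2)$ via the mean value theorem (your integral representation is the same thing), and then insert these into the explicit formulae for $\partial^*\KK$ from Appendix~\ref{appendix:formulasRRR} term by term. The paper verifies the positivity conditions $\xi_0/\kappa^2<1$ and $\xi_0/(2\eta_0\kappa^2)<2$ by evaluating at the corner $(\kappa,\varrho_1,\varrho_2)=(3,60,3)$ and invoking monotonicity, which is exactly the numerical verification you allude to.
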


\begin{proof}
Let us consider first lower bounds for $1+\JJ(U,\Zu)$. 
By Lemma~\ref{lemma:LipschitzJJ}, one has that
\[
\vabs{\JJ(U,\Zu)} \leq \frac{\xi_0}{\kappa^2}.
\]		
Notice that,  by Proposition~\ref{proposition:firstIteration}, $\alpha_0(\kappa)$ and $\beta_0(\kappa)$ are decreasing for positive values of $\kappa$.
Therefore, $\alpha(\kappa,\varrho_1)$ and $\beta(\kappa,\varrho_2)$ are decreasing for $\kappa>0$ and increasing for $\varrho_1, \varrho_2>0$.
Then, by the definition of $\xi_0$ in Lemma~\ref{lemma:LipschitzJJ},
\begin{equation}\label{proof:estimateJJLipschitz}
\vabs{\JJ(U,\Zu)} \leq \frac{\xi_0}{\kappa^2}\Bigg|_{\kappa=3,\varrho_1=60, \varrho_2=3} < 1.
\end{equation}
Therefore, by the triangular inequality,
\begin{equation}\label{proof:LipzchitzDenominator1}
\vabss{\sqrt{1+\JJ(U,\Zu)}} \geq \sqrt{1 - \vabs{\JJ(U,\Zu)}} \geq \sqrt{1 - \frac{\xi_0}{\kappa^2}}:= \eta_0.
\end{equation}

Next, we consider lower bounds for the denominator $1+\sqrt{1+\JJ(U,\Zu)}$. 
Notice that, by the mean value theorem and taking into account~\eqref{proof:estimateJJLipschitz},
\[
\begin{split}
	\vabs{\sqrt{1+\JJ(U,\Zu)} - 1} &\leq \vabs{\JJ(U,\Zu)} \sup_{\sigma \in [0,1]}
	\vabs{\frac1{2\sqrt{1+\sigma\JJ(U,\Zu)}}}
	\\
	&\leq \frac{\xi_0}{2\kappa^2} \frac1{\sqrt{1-\vabs{\JJ(U,\Zu)}}}
	\leq \frac{\xi_0}{2 \eta_0 \kappa^2}.
\end{split}
\]
In addition, one can see that,
\[
\vabs{\sqrt{1+\JJ(U,\Zu)} - 1} 
\leq 
\frac{\xi_0}{2 \eta_0 \kappa^2}\Bigg|_{\kappa=3,\varrho_1=60, \varrho_2=3} < 2.
\]
Then, by the triangular inequality,
\begin{equation} \label{proof:LipzchitzDenominator2}
	\vabs{1 + \sqrt{1+\JJ(U,\Zu)}} \geq 2- \frac{\xi_0}{2\eta_0\kappa^2}.
\end{equation}

We estimate now the first derivatives of $\KK$. 
By the formulae in Appendix~\ref{appendix:formulasRRR}, Lemma~\ref{lemma:LipschitzJJ}, \eqref{proof:LipzchitzDenominator1} and \eqref{proof:LipzchitzDenominator2}, one has that
\[
\begin{split}
\vabs{\partial_U \KK(U,\Zu)} &\leq
\frac{\alpha^2}{2 \vabs{U}^{\frac{17}3}}
+ \frac{2 \xi_0}{9 \eta_0 (2-\xi_0 2^{-1}\eta_0^{-1}\kappa^{-2}) \vabs{U}^{\frac{11}3}}
+\frac{\xi_1}{6  \eta_0^3 \vabs{U}^{\frac{11}3}}
\\
&\leq \frac1{\vabs{U}^{\frac{11}3}} \paren{
	\frac{4 \xi_0}{9 \eta_0 (4-\xi_0\eta_0^{-1}\kappa^{-2})}
	+ \frac{\xi_1}{6 \eta_0^3}
	+ \frac{\alpha^2}{2 \kappa^2}
}
:= \frac{\eta_1}{\vabs{U}^{\frac{11}3}}
\end{split}
\]
and
\[
\begin{split}
	\vabs{\partial_W \KK(U,\Zu)} &=
	\frac{3\al}{2\vabs{U}^{2}}
	+ \frac{\xi_2}{6 \eta_0^3 \vabs{U}^{2}}
	:= \frac{\eta_2}{\vabs{U}^2},
	\\
	\vabs{\partial_X \KK(U,\Zu)} &=
	\frac{\xi_3}{ \vabs{U}^{\frac43} 6\eta_0^3} 
	:= \frac{\eta_3}{\vabs{U}^{\frac43}},
	\\
	\vabs{\partial_Y \KK(U,\Zu)} &\leq \frac{\eta_3}{\vabs{U}^{\frac43}}.
\end{split}
\]

Finally, we consider estimates for the second derivatives of $\KK$,
\[
\begin{split}
\vabs{\partial_{UW} \KK(U,\Zu)} &\leq
\frac{\al}{\vabs{U}^{3}}
+\frac{\xi_2}{9 \eta_0^3 \vabs{U}^{3}}
+\frac{\xi_4}{6 \eta_0^3 \vabs{U}^{3}}
+\frac{\xi_1 \xi_2}{4 \eta_0^5 \vabs{U}^{5}}
\\
&\leq
\frac1{\vabs{U}^3} \paren{
\al
+\frac{\xi_2}{9 \eta_0^3}
+\frac{\xi_4}{6 \eta_0^3}
+\frac{\xi_1 \xi_2}{4 \eta_0^5 \kappa^2}
}
:= \frac{\eta_4}{\vabs{U}^3},
\\
\vabs{\partial_{UX} \KK(U,\Zu)} &\leq
\frac{\xi_3}{9 \eta_0^3 \vabs{U}^{\frac73}} 
+\frac{\xi_5}{6 \eta_0^3 \vabs{U}^{\frac73}} 
+\frac{\xi_1\xi_3}{4 \eta_0^5 \vabs{U}^{\frac{13}3}}
\\
&\leq
\frac1{\vabs{U}^{\frac73}} \paren{
\frac{\xi_3}{9 \eta_0^3} 
+\frac{\xi_5}{6 \eta_0^3} 
+\frac{\xi_1\xi_3}{4 \eta_0^5 \kappa^2}
}:= \frac{\eta_5}{\vabs{U}^{\frac73}},
\\
\vabs{\partial_{UY} \KK(U,\Zu)} &\leq
\frac{\eta_5}{\vabs{U}^{\frac73}}
\end{split}
\]
and
\[
\begin{split}
\vabs{\partial_W^2 \KK(U,\Zu)} 
&\leq
\frac{3 \vabs{U}^{\frac23}}{2}
+ \frac{4}{27 \eta_0^3 \vabs{U}^{\frac43}}
+ \frac{\xi_2^2}{4 \eta_0^5 \vabs{U}^{\frac{10}3}}
\\
&\leq
\vabs{U}^{\frac23} \paren{\frac32
+ \frac{4}{27 \eta_0^3 \kappa^2}
+ \frac{\xi_2^2}{4 \eta_0^5 \kappa^4}
}
:= \vabs{U}^{\frac23} \eta_6,
\\
\vabs{\partial_{WX} \KK(U,\Zu)} &\leq
\frac{2}{27 \eta_0^3 \vabs{U}^{\frac53}}
+\frac{\xi_2 \xi_3}{4 \eta_0^5 \vabs{U}^{\frac83}}
\leq \frac1{\vabs{U}^{\frac53}} \paren{
\frac{2}{27 \eta_0^3}
+\frac{\xi_2 \xi_3}{4 \eta_0^5 \kappa}
}
:= \frac{\eta_7}{\vabs{U}^{\frac53}},
\\
\vabs{\partial_{WY} \KK(U,\Zu)} &\leq
\frac{\eta_7}{\vabs{U}^{\frac53}}
\end{split}
\]
and
\[
\begin{split}
	\vabs{\partial_{XY} \KK(U,\Zu)} &\leq
	\frac{5}{27 \eta_0^3 \vabs{U}^{2}}
	+ \frac{\xi_3^2}{4 \eta_0^5 \vabs{U}^{2}}
	:= \frac{\eta_8}{\vabs{U}^2},
	\\
	\vabs{\partial^2_{X} \KK(U,\Zu)} &\leq
	\frac{1}{9 \eta_0^3 \vabs{U}^2}
	+ \frac{\xi_3^2}{4 \eta_0^5 \vabs{U}^{2}}
	:= 
	\frac{\eta_9}{\vabs{U}^2},
	\\
	\vabs{\partial^2_{Y} \KK(U,\Zu)} &\leq
	\frac{\eta_9}{\vabs{U}^2}.
\end{split}
\]
Since $\xi_0,\cdots ,\xi_5$ are decreasing functions for $\kappa$ and increasing for $\varrho$, by construction one can see that the  functions $\eta_1,..,\eta_9$ are increasing for 	$\varrho_1 \in (1,60)$ and $\varrho_2 \in (1,3)$ and decreasing for $\kappa\geq 3$.
\end{proof}
The two previous lemmas provide the necessary bounds to estimate the derivatives of the function $\RRR$.
\begin{lemma} \label{lemma:estimatesDerivativesRRRStokes}
Assume 	$\varrho_1 \in (1,60)$, $\varrho_2 \in (1,3)$ and $\kappa\geq3$. Then, for $\Zu \in \rectangle$, one has that
  
\begin{align*}
\normInn{\partial_W \RRR_1[\Zu]}_3 &\leq \nu_1,
&
\normInn{\partial_X \RRR_1[\Zu]}_{\frac73} &\leq \nu_2, 
&
\normInn{\partial_Y \RRR_1[\Zu]}_{\frac73}
&\leq \nu_2,
\\
\normInn{\partial_W \RRR_2[\Zu]}_{\frac23} &\leq \nu_3,
&
\normInn{\partial_X \RRR_2[\Zu]}_{2} &\leq \nu_4, 
&
\normInn{\partial_Y \RRR_2[\Zu]}_{2} &\leq \nu_5,
\\
\normInn{\partial_W \RRR_3[\Zu]}_{\frac23} &\leq \nu_3,
&
\normInn{\partial_X \RRR_3[\Zu]}_{2} &\leq  \nu_5,
&
\normInn{\partial_Y \RRR_3[\Zu]}_{2} &\leq \nu_4,
\end{align*}
  
where
\[
\begin{split}
	\nu_0 &= \paren{1 - \frac{\eta_2}{\kappa^2}}^2, 
	\\
	\nu_1 &= \frac{\eta_4}{\nu_0}
	+ \frac{\eta_2\eta_4}{\nu_0 \kappa^2}
	+ \frac{\eta_1\eta_6}{\nu_0},
	\\
	\nu_2 &= \frac{\eta_5}{\nu_0}
	+
	\frac{\eta_2\eta_5}{\nu_0\kappa^{2}}
	+
	\frac{\eta_1\eta_7}{\nu_0\kappa^3},
\end{split}
\]
\[
\begin{split}
	\nu_3 &= \frac{\eta_7}{\nu_0\kappa} +
	\frac{\eta_2\eta_7}{\nu_0\kappa^3} +
	\frac{\beta \eta_6}{\nu_0} +
	\frac{2\beta \eta_2 \eta_6}{\nu_0 \kappa^2} +
	\frac{\eta_3\eta_6}{\nu_0},
 \\
	\nu_4 &= 
	\frac{\eta_8}{\nu_0} +  \frac{\eta_2\eta_8}{\nu_0\kappa^2} +
	\frac{\eta_2}{\nu_0} + \frac{\eta_2^2}{\nu_0 \kappa^2} + 
	\frac{\beta \eta_7}{\nu_0 \kappa} + \frac{2\beta \eta_2 \eta_7}{\nu_0 \kappa^3} +
	\frac{\eta_3\eta_7}{\nu_0 \kappa},
	\\
	\nu_5 &=
	\frac{\eta_9}{\nu_0} + \frac{\eta_2\eta_9}{\nu_0\kappa^2} +
	\frac{\beta\eta_7}{\nu_0\kappa} + \frac{2\beta\eta_2\eta_7}{\nu_0\kappa^3} +
	\frac{\eta_3\eta_7}{\nu_0\kappa}.
\end{split}
\]
Moreover, for $\varrho_1 \in (1,60)$, $\varrho_2 \in (1,3)$ and $\kappa\geq 3$, the  functions $\nu_1,..,\nu_5$ are positive functions decreasing in $\kappa$ and increasing in $\varrho_1$ and $\varrho_2$.
\end{lemma}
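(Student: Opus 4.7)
The plan is to differentiate the explicit expressions
\[
\RRR_1 = -\frac{\partial_U \KK}{1+\partial_W\KK},\quad
\RRR_2 = \frac{i\,\partial_Y\KK - i X\,\partial_W\KK}{1+\partial_W\KK}, \quad
\RRR_3 = \frac{-i\,\partial_X\KK + i Y\,\partial_W\KK}{1+\partial_W\KK},
\]
obtained from~\eqref{def:operatorRRRInner} together with $\AAA\varphi=(0,i\varphi_2,-i\varphi_3)^T$, via the quotient rule, and then bound each resulting term using the estimates on the derivatives of $\KK$ provided by the previous lemma, combined with the product and inclusion rules of Lemma~\ref{lemma:sumNorms}.

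The first technical step is to control the denominator $1+\partial_W\KK$. From $\normInn{\partial_W\KK(\cdot,\Zu)}_2\le\eta_2$ and $\vabs{U}\ge\kappa$ we get $\vabs{\partial_W\KK}\le\eta_2/\kappa^2$, which for $\kappa\ge 3$ and admissible $\varrho_1,\varrho_2$ is strictly less than $1$; hence $\vabs{1+\partial_W\KK}^2\ge\nu_0=(1-\eta_2/\kappa^2)^2$. For terms carrying only one power of the denominator I shall use
\[
\frac{1}{\vabs{1+\partial_W\KK}} = \frac{\vabs{1+\partial_W\KK}}{\vabs{1+\partial_W\KK}^2} \le \frac{1+\eta_2/\kappa^2}{\nu_0},
\]
which explains the $1/\nu_0 + \eta_2/(\nu_0\kappa^2)$ structure appearing repeatedly in the definitions of the $\nu_j$.

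For each of the nine estimates the calculation is routine. For instance, the quotient rule gives
\[
\partial_W \RRR_1 = -\frac{\partial_{UW}\KK}{1+\partial_W\KK} + \frac{\partial_U\KK\cdot\partial_W^2\KK}{(1+\partial_W\KK)^2},
\]
and combining $\normInn{\partial_{UW}\KK}_3\le\eta_4$, $\normInn{\partial_U\KK}_{11/3}\le\eta_1$ and $\normInn{\partial_W^2\KK}_{-2/3}\le\eta_6$ with the denominator bound above yields $\normInn{\partial_W\RRR_1}_3\le\eta_4(1+\eta_2/\kappa^2)/\nu_0+\eta_1\eta_6/\nu_0=\nu_1$. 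The remaining eight bounds are obtained in the same manner: one writes $\partial_\bullet\RRR_j$ as a sum of a few terms of the form $(\text{a derivative of a }\KK\text{-derivative})/(1+\partial_W\KK)^k$ with $k\in\{1,2\}$, plugs in the corresponding $\eta$-bounds, and matches exponents in the weighted norm through Lemma~\ref{lemma:sumNorms}. For $\RRR_2$ and $\RRR_3$ the extra summands coming from $X\,\partial_W\KK$ and $Y\,\partial_W\KK$ contribute the $\beta$-factors visible in $\nu_3,\nu_4,\nu_5$ through $\normInn{X}_{4/3},\normInn{Y}_{4/3}\le\beta$.

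Finally, the monotonicity of $\nu_1,\ldots,\nu_5$ in $(\kappa,\varrho_1,\varrho_2)$ follows from the analogous monotonicity of $\eta_0,\ldots,\eta_9$ stated in the previous lemma, since each $\nu_j$ is built from the $\eta_k$ and $\alpha,\beta$ by sums, products and quotients whose denominators stay uniformly bounded away from zero on the admissible range. No genuine obstacle is expected: the proof is mechanical, and the only delicate point is to keep the constants tight enough not to spoil the eventual numerical value $\kappa^*=6.24$, which is why the slightly cruder denominator bound described above is used in place of $(1-\eta_2/\kappa^2)^{-1}$—it gives a cleaner algebraic form with negligible loss of sharpness at the relevant values of $\kappa$.
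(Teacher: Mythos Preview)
Your proposal is correct and matches the paper's proof essentially step for step: the paper also writes each $\partial_\bullet\RRR_j$ via the quotient rule over the common denominator $(1+\partial_W\KK)^2$ (using the explicit formulae in Appendix~\ref{appendix:formulasRRR}), establishes $\vabs{1+\partial_W\KK}^2\ge\nu_0$, handles the single-power denominator by bounding $\vabs{1+\partial_W\KK}\le 1+\eta_2/|U|^2$ in the numerator, and then reads off each $\nu_j$ by collecting terms with the $\eta$-bounds and $\normInn{X}_{4/3},\normInn{Y}_{4/3}\le\beta$. The monotonicity claim is likewise dispatched exactly as you describe.
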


\begin{proof}
Let us first look for a lower bound for the denominator $1+\partial_W\KK(U,\Zu)$.
Notice that,
\[
\begin{split}
\vabs{\partial_W \KK(U,\Zu)} \leq \frac{\eta_2}{\kappa^2}\Bigg|_{\kappa=3,\varrho_1=60, \varrho_2=3}
< 1.
\end{split}
\]
Then, by the triangular inequality,
\[
\vabs{1 + \partial_W \KK(U,\Zu)}^2 \geq 
\paren{1 - \frac{\eta_2}{\kappa^2}}^2:= \nu_0.
\]
We now analyze the derivatives of $\RRR_1[\Zu]$. Indeed, by the formulae in Appendix~\ref{appendix:formulasRRR}, one has that
\[
\begin{split}
\vabs{\partial_W \RRR_1[\Zu](U)} 
&\leq
\frac{\eta_4}{\nu_0 \vabs{U}^3}
+ \frac{\eta_2\eta_4}{\nu_0 \vabs{U}^5}
+ \frac{\eta_1\eta_6}{\nu_0 \vabs{U}^3}
\\
&\leq
\frac1{\vabs{U}^3} \paren{
\frac{\eta_4}{\nu_0}
+ \frac{\eta_2\eta_4}{\nu_0 \kappa^2}
+ \frac{\eta_1\eta_6}{\nu_0}
}
:= \frac{\nu_1}{\vabs{U}^3},
\\
\vabs{\partial_X \RRR_1[\Zu](U)} &\leq
\frac{\eta_5}{\nu_0\vabs{U}^\frac73}
+
\frac{\eta_2\eta_5}{\nu_0\vabs{U}^{\frac{13}3}}
+
\frac{\eta_1\eta_7}{\nu_0\vabs{U}^{\frac{16}3}}
\\
&\leq
\frac1{\vabs{U}^{\frac73}} \paren{
\frac{\eta_5}{\nu_0}
+
\frac{\eta_2\eta_5}{\nu_0\kappa^{2}}
+
\frac{\eta_1\eta_7}{\nu_0\kappa^3}
}
:= \frac{\nu_2}{\vabs{U}^{\frac73}},
\\
\vabs{\partial_Y \RRR_1[\Zu](U)} &\leq
\frac{\nu_2}{\vabs{U}^{\frac73}}.
\end{split}
\]
The derivatives of $\RRR_2[\Zu]$ satisfy
\[
\begin{split}
\vabs{\partial_W \RRR_2[\Zu](U)} 
&\leq
\frac1{\nu_0}
\paren{\frac{\eta_7}{\vabs{U}^{\frac53}}
+
\frac{\beta \eta_6}{\vabs{U}^{\frac23}}}
\paren{1 + \frac{\eta_2}{\vabs{U}^2}}
+ \frac1{\nu_0}\paren{
\frac{\eta_3}{\vabs{U}^{\frac43}}
+ 
\frac{\beta\eta_2}{\vabs{U}^{\frac{10}3}}}
\eta_6 \vabs{U}^{\frac23}
\\
&\leq 
\frac1{\vabs{U}^{\frac23}} \paren{
\frac{\eta_7}{\nu_0\kappa} +
\frac{\eta_2\eta_7}{\nu_0\kappa^3} +
\frac{\beta \eta_6}{\nu_0} +
\frac{2\beta \eta_2 \eta_6}{\nu_0 \kappa^2} +
\frac{\eta_3\eta_6}{\nu_0} 
}
:= \frac{\nu_3}{\vabs{U}^{\frac23}}, 
\\
\vabs{\partial_X \RRR_2[\Zu](U)} 
&\leq 
\frac1{\nu_0}
\paren{
\frac{\eta_8}{\vabs{U}^2} +
\frac{\eta_2}{\vabs{U}^2} +
\frac{\beta \eta_7}{\vabs{U}^3}
} \paren{
1+\frac{\eta_2}{\vabs{U}^2}
}
+ \frac1{\nu_0} \paren{
\frac{\eta_3}{\vabs{U}^{\frac43}}
+ \frac{\beta \eta_2}{\vabs{U}^{\frac{10}3}}
} \frac{\eta_7}{\vabs{U}^{\frac53}}
\\
&\leq \frac1{\vabs{U}^2} \paren{
\frac{\eta_8}{\nu_0} +  \frac{\eta_2\eta_8}{\nu_0\kappa^2} +
\frac{\eta_2}{\nu_0} + \frac{\eta_2^2}{\nu_0 \kappa^2} + 
\frac{\beta \eta_7}{\nu_0 \kappa} + \frac{2\beta \eta_2 \eta_7}{\nu_0 \kappa^3} +
\frac{\eta_3\eta_7}{\nu_0 \kappa}
} := \frac{\nu_4}{\vabs{U}^2},
\end{split}
\]
\[
\begin{split}
\vabs{\partial_Y \RRR_2[\Zu](U)} 
&\leq 
\frac1{\nu_0} \paren{
\frac{\eta_9}{\vabs{U}^2} +
\frac{\beta\eta_7}{\vabs{U}^3}
} \paren{
1 + \frac{\eta_2}{\vabs{U}^2}} + 
\frac1{\nu_0}\paren{
\frac{\eta_3}{\vabs{U}^{\frac43}} +
\frac{\beta\eta_2}{\vabs{U}^{\frac{10}3}} }
\frac{\eta_7}{\vabs{U}^{\frac53}}
\\
&\leq \frac1{\vabs{U}^2} \paren{
\frac{\eta_9}{\nu_0} + \frac{\eta_2\eta_9}{\nu_0\kappa^2} +
\frac{\beta\eta_7}{\nu_0\kappa} + \frac{2\beta\eta_2\eta_7}{\nu_0\kappa^3} +
\frac{\eta_3\eta_7}{\nu_0\kappa} 
} := \frac{\nu_5}{\vabs{U}^2}.
\end{split}
\]
An analogous procedure leading to the same estimates holds for the derivatives of $\RRR_3$.

Finally, since $\eta_1,\cdots ,\eta_9$ are decreasing functions for $\kappa$ and increasing for $\varrho_1$ and $\varrho_2$,  one can see that the  functions $\nu_1,\cdots ,\nu_5$ are increasing for $\varrho_1 \in (1,60)$ and $\varrho_2 \in (1,3)$ and decreasing for $\kappa\geq 3$.
\end{proof}

\subsection{The Lipschitz constant of \texorpdfstring{$\FF$}{F}}
\label{subsection:operatorFF}
The next lemma gives estimates for the Lipschitz constant of the operator $\FF$ in \eqref{def:operatorFF} with respect to each variable.
\begin{proposition} \label{proposition:LipschitzStokes}
Assume $\gamma\in \left(0,\arctan\frac{\sqrt{3}}2\right)$, $\kappa \geq 3$,	$\varrho_1 \in (1,60)$, $\varrho_2 \in (1,3)$.
%
Then, for any $\Zu$, $\ZuHat \in \rectangle$, one has that
\[
\begin{split}
	\normInn{{\FF_1}[\Zu]-{\FF_1}[\ZuHat]}_{\frac{8}{3}} &\leq 
	\frac{\wt{\nu}_1}{\kInn^2} \normInn{\Wu - \WuHat}_{\frac{8}{3}} 
	+ \wt{\nu}_2 \normInn{\Xu - \XuHat}_{\frac{4}{3}}
	+ \wt{\nu}_2 \normInn{\Yu - \YuHat}_{\frac{4}{3}},
	\\
	\normInn{{\FF_2}[\Zu]-{\FF_2}[\ZuHat]}_{\frac{4}{3}} &\leq 
	\frac{\wt{\nu}_3}{\kInn^2} \normInn{\Wu - \WuHat}_{\frac{8}{3}} 
	+ \frac{\wt{\nu}_4}{\kInn^2} \normInn{\Xu - \XuHat}_{\frac{4}{3}}
	+ \frac{\wt{\nu}_5}{\kInn^2} \normInn{\Yu - \YuHat}_{\frac{4}{3}},
	\\
	\normInn{{\FF_3}[\Zu]-{\FF_3}[\ZuHat]}_{\frac{4}{3}} &\leq 
	\frac{\wt{\nu}_3}{\kInn^2} \normInn{\Wu - \WuHat}_{\frac{8}{3}} 
	+ \frac{\wt{\nu}_5}{\kInn^2} \normInn{\Xu - \XuHat}_{\frac{4}{3}}
	+ \frac{\wt{\nu}_4}{\kInn^2} \normInn{\Yu - \YuHat}_{\frac{4}{3}},
\end{split}
\]
where
\[
	\wt{\nu}_i = \nu_i {\frac{\sqrt{\pi} \Gamma(\frac43)}{2\Gamma(\frac{11}{6}) }},
	\quad \text{for } i=1,2, 
    \qquad
	\wt{\nu}_j = \frac{\nu_j}{\sin\gamma (\cos\gamma)^{\frac43}},
	\quad \text{for } j=3,4,5
\]
and $\nu_j$ are the functions introduced in Lemma \ref{lemma:estimatesDerivativesRRRStokes}.
Moreover, for $\kappa\geq 3$, $\varrho_1 \in (1,60)$ and $\varrho_2 \in (1,3)$, the  functions $\wt{\nu}_i$, $i=1\ldots 5$,  are positive functions decreasing in $\kappa$ and increasing in $\varrho_1$ and $\varrho_2$.

This implies that 
\[
\normInnTotal{\FF[\Zu]-\FF[\ZuHat]} \leq L \normInnTotal{\Zu-\ZuHat},
\]
where
\begin{equation}\label{def:L}
L= \max\claus{\frac{\wt{\nu}_1}{\kappa^2}+2\wt{\nu}_2,
\frac{\wt{\nu}_3+\wt{\nu}_4+\wt{\nu}_5}{\kappa^2}}.
\end{equation}
\end{proposition}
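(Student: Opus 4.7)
The plan is to exploit the factorization $\FF=\GG\circ\RRR$ and combine the derivative bounds for $\RRR$ from Lemma \ref{lemma:estimatesDerivativesRRRStokes} with the operator bounds for $\GG$ from Lemma \ref{lemma:boundsOperatorGG}. Since $\rectangle$ is convex, for any $\Zu,\ZuHat\in\rectangle$ the segment $s\Zu+(1-s)\ZuHat$ with $s\in[0,1]$ stays in $\rectangle$, so the fundamental theorem of calculus gives
\[
\RRR_j[\Zu]-\RRR_j[\ZuHat]=\int_0^1\bigl(\partial_W\RRR_j\,(\Wu-\WuHat)+\partial_X\RRR_j\,(\Xu-\XuHat)+\partial_Y\RRR_j\,(\Yu-\YuHat)\bigr)\,ds,
\]
with the derivatives evaluated at $s\Zu+(1-s)\ZuHat\in\rectangle$. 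The two ingredients from Lemma \ref{lemma:sumNorms} that I will use throughout are the product rule $\normInn{\varphi\zeta}_{\nu+\eta}\leq\normInn{\varphi}_{\nu}\normInn{\zeta}_{\eta}$ and the inclusion $\normInn{\varphi}_{\eta}\leq\kappa^{\eta-\nu}\normInn{\varphi}_{\nu}$ valid for $\nu>\eta$; the latter is the source of every $\kappa^{-2}$ gain announced in the proposition.

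For the first component, I multiply each partial derivative bound from Lemma \ref{lemma:estimatesDerivativesRRRStokes} against the corresponding coordinate difference and add the weights. The $W$-term naturally lives in $\XcalU_{3+8/3}=\XcalU_{17/3}$, which I downgrade to $\XcalU_{11/3}$ losing a factor $\kappa^{-2}$, while the $X$- and $Y$-terms already lie in $\XcalU_{7/3+4/3}=\XcalU_{11/3}$, yielding
\[
\normInn{\RRR_1[\Zu]-\RRR_1[\ZuHat]}_{11/3}\leq\frac{\nu_1}{\kappa^2}\normInn{\Wu-\WuHat}_{8/3}+\nu_2\normInn{\Xu-\XuHat}_{4/3}+\nu_2\normInn{\Yu-\YuHat}_{4/3}.
\]
Applying $\GG_1:\XcalU_{11/3}\to\XcalU_{8/3}$ from item 1 of Lemma \ref{lemma:boundsOperatorGG} with operator norm $G_{11/3}=\sqrt{\pi}\,\Gamma(4/3)/(2\Gamma(11/6))$ then delivers the first inequality of the proposition with $\wt\nu_i=\nu_i G_{11/3}$ for $i=1,2$.

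For $\FF_2$ and $\FF_3$ the weight arithmetic is more uniform: every pairing in the chain rule produces weight $2/3+8/3=10/3$ (for the $W$-direction) or $2+4/3=10/3$ (for the $X,Y$-directions), so each contribution is downgraded from $\XcalU_{10/3}$ to $\XcalU_{4/3}$ with the same saving $\kappa^{-2}$. Item 2 of Lemma \ref{lemma:boundsOperatorGG} applied with $\sigma=\gamma$ then contributes the common prefactor $1/(\sin\gamma(\cos\gamma)^{4/3})$ and produces the announced $\wt\nu_j=\nu_j/(\sin\gamma(\cos\gamma)^{4/3})$ for $j=3,4,5$. The apparent swap of $\nu_4$ and $\nu_5$ between the $\FF_2$ and $\FF_3$ estimates merely reflects the $X\leftrightarrow Y$ asymmetry built into Lemma \ref{lemma:estimatesDerivativesRRRStokes} (where $\normInn{\partial_Y\RRR_2}_{2}\leq\nu_5$ but $\normInn{\partial_X\RRR_3}_{2}\leq\nu_5$). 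Monotonicity in $\kappa,\varrho_1,\varrho_2$ of every $\wt\nu_i$ is inherited directly from that of $\nu_i$ established in Lemma \ref{lemma:estimatesDerivativesRRRStokes}, and the global constant $L$ in \eqref{def:L} will then follow by feeding the three component inequalities into the definition \eqref{def:norminnTotal} of $\normInnTotal{\cdot}$ and taking the maximum.

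There is no serious conceptual obstacle here: the proposition is essentially a careful exercise in weight bookkeeping. The only step that requires real attention is checking that the exponents match precisely, so that the $\kappa^{-2}$ factors appear exactly where the proposition states them, and that the standing hypothesis $\gamma\in(0,\arctan(\sqrt{3}/2))$ leaves enough room to take $\sigma=\gamma$ in item 2 of Lemma \ref{lemma:boundsOperatorGG}, which is what justifies the contour deformation underlying the bounds on $\GG_2$ and $\GG_3$.
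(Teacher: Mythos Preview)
Your proposal is correct and follows essentially the same route as the paper: the mean value theorem for $\RRR$, the weight bookkeeping via Lemmas \ref{lemma:sumNorms} and \ref{lemma:estimatesDerivativesRRRStokes}, and then the $\GG$-bounds from Lemma \ref{lemma:boundsOperatorGG} with $\eta=\tfrac{11}{3}$ for $\FF_1$ and $\nu=\tfrac{4}{3}$, $\sigma=\gamma$ for $\FF_2,\FF_3$. One small clarification: the hypothesis $\gamma<\arctan(\sqrt{3}/2)$ is not needed to \emph{permit} the choice $\sigma=\gamma$ in item 2 of Lemma \ref{lemma:boundsOperatorGG} (that is always legal), but to guarantee that $\sigma\mapsto 1/(\sin\sigma(\cos\sigma)^{4/3})$ is decreasing on $(0,\gamma]$, so that $\sigma=\gamma$ actually gives the sharpest constant --- the paper makes this point explicit.
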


\begin{proof}

To estimate the Lipschitz constant, we first estimate each component $\RRR_j[\Zu]-\RRR_j[\ZuHat]$ separately  for $j=1,2,3$. By the mean value theorem we have
\[
	\RRR_j[\Zu]-\RRR_j[\ZuHat] 
	=\boxClaus{\int_0^1 D \RRR_j[s \Zu + (1-s) \ZuHat]  d s }
	(\Zu - \ZuHat) .
\]
Then, for $j=2,3$,
\[
\begin{split}
	\MoveEqLeft[4]{\normInn{\RRR_1[\Zu]-\RRR_1[\ZuHat]}_{\frac{11}{3}} \leq
		\normInn{\Wu - \WuHat}_{\frac{8}{3}}
		\sup_{\varphi \in {B(\varrho)}}  \normInn{\partial_W\RRR_1[\varphi]}_{1} }\\
	&+ \normInn{\Xu - \XuHat}_{\frac{4}{3}}
	\sup_{\varphi \in {B(\varrho)}} \normInn{\partial_X{\RRR_1}[\varphi]}_{\frac{7}{3}} 
	+ \normInn{\Yu - \YuHat}_{\frac{4}{3}} \sup_{\varphi \in {B(\varrho)}} 
	\normInn{\partial_Y\RRR_1[\varphi]}_{\frac{7}{3}}, \\
	\MoveEqLeft[4]{\normInn{\RRR_j[\Zu]-\RRR_j[\ZuHat]}_{\frac{4}{3}} \leq
		\normInn{\Wu - \WuHat}_{\frac{8}{3}}
		\sup_{\varphi \in {B(\varrho)}} \normInn{\partial_W\RRR_j[\varphi]}_{-\frac{4}{3}} }\\
	&+ \normInn{\Xu - \XuHat}_{\frac{4}{3}} 
	\sup_{\varphi \in {B(\varrho)}} \normInn{\partial_X{\RRR_j}[\varphi]}_{0} 
	+ \normInn{\Yu - \YuHat}_{\frac{4}{3}}
	\sup_{\varphi \in {B(\varrho)} } 
	\normInn{\partial_Y \RRR_j[\varphi]}_{0}.
\end{split}
\]
Applying Lemma~\ref{lemma:estimatesDerivativesRRRStokes}, we obtain
\[
\begin{split}
	\normInn{{\RRR_1}[\Zu]-{\RRR_1}[\ZuHat]}_{\frac{11}{3}} &\leq 
	 \frac{\nu_1}{\kInn^2} \normInn{\Wu - \WuHat}_{\frac{8}{3}} 
	+ \nu_2 \normInn{\Xu - \XuHat}_{\frac{4}{3}}
	+ \nu_2 \normInn{\Yu - \YuHat}_{\frac{4}{3}},
	\\
	\normInn{{\RRR_2}[\Zu]-{\RRR_2}[\ZuHat]}_{\frac{4}{3}} &\leq 
	 \frac{\nu_3}{\kInn^2} \normInn{\Wu - \WuHat}_{\frac{8}{3}} 
	+ \frac{\nu_4}{\kInn^2} \normInn{\Xu - \XuHat}_{\frac{4}{3}}
	+ \frac{\nu_5}{\kInn^2} \normInn{\Yu - \YuHat}_{\frac{4}{3}},
	\\
	\normInn{{\RRR_3}[\Zu]-{\RRR_3}[\ZuHat]}_{\frac{4}{3}} &\leq 
	\frac{\nu_3}{\kInn^2} \normInn{\Wu - \WuHat}_{\frac{8}{3}} 
	+ \frac{\nu_5}{\kInn^2} \normInn{\Xu - \XuHat}_{\frac{4}{3}}
	+ \frac{\nu_4}{\kInn^2} \normInn{\Yu - \YuHat}_{\frac{4}{3}}.
\end{split}
\]
Finally, applying Lemma~\ref{lemma:boundsOperatorGG}, for all $0<\sigma\leq\gamma$, we obtain the following estimates 
\[
\begin{split}
	\normInn{{\FF_1}[\Zu]-{\FF_1}[\ZuHat]}_{\frac83} &\leq 
	\frac{\sqrt{\pi} \Gamma(\frac43)}{2\Gamma(\frac{11}{6}) }
 \boxClaus{
	\frac{\nu_1}{\kInn^2} \normInn{\Wu - \WuHat}_{\frac{8}{3}} 
	+ \nu_2 \normInn{\Xu - \XuHat}_{\frac{4}{3}}
	+ \nu_2 \normInn{\Yu - \YuHat}_{\frac{4}{3}} },
	\\
	\normInn{{\FF_2}[\Zu]-{\FF_2}[\ZuHat]}_{\frac{4}{3}} &\leq 
	\frac1{\sin\sigma (\cos\sigma)^{\frac43}}\boxClaus{
	\frac{\nu_3}{\kInn^2} \normInn{\Wu - \WuHat}_{\frac{8}{3}} 
	+ \frac{\nu_4}{\kInn^2} \normInn{\Xu - \XuHat}_{\frac{4}{3}}
	+ \frac{\nu_5}{\kInn^2} \normInn{\Yu - \YuHat}_{\frac{4}{3}}},
	\\
	\normInn{{\FF_3}[\Zu]-{\FF_3}[\ZuHat]}_{\frac{4}{3}} &\leq 
	\frac1{\sin\sigma (\cos\sigma)^{\frac43}}
	\boxClaus{
	\frac{\nu_3}{\kInn^2} \normInn{\Wu - \WuHat}_{\frac{8}{3}} 
	+ \frac{\nu_5}{\kInn^2} \normInn{\Xu - \XuHat}_{\frac{4}{3}}
	+ \frac{\nu_4}{\kInn^2} \normInn{\Yu - \YuHat}_{\frac{4}{3}}}.
\end{split}
\]
Let us notice that $f(\sigma)=\frac1{\sin\sigma (\cos\sigma)^{\frac43}}$ has a minimum at $\sigma_*=\arctan(\sqrt3/2)$ and is decreasing for $\sigma\in (0,\gamma)$ since we are assuming that $\gamma \in (0, \arctan(\sqrt3/2))$.

Then, defining
  
\begin{align*}
	\wt{\nu}_1 &= {\frac{\sqrt{\pi} \Gamma(\frac43)}{2\Gamma(\frac{11}{6}) }}\nu_1,
	&
	\wt{\nu}_2 &= {\frac{\sqrt{\pi} \Gamma(\frac43)}{2\Gamma(\frac{11}{6}) }} \nu_2, 
	\\
	\wt{\nu}_3 &= f(\gamma)\nu_3,
	&
	\wt{\nu}_4 &= f(\gamma)\nu_4,
	&
	\wt{\nu}_5 &= f(\gamma)\nu_5,
\end{align*}
  
one has that
\[
\begin{split}
	\normInn{{\FF_1}[\Zu]-{\FF_1}[\ZuHat]}_{\frac{8}{3}} &\leq 
	\frac{\wt{\nu}_1}{\kInn^2} \normInn{\Wu - \WuHat}_{\frac{8}{3}} 
	+ \wt{\nu}_2 \normInn{\Xu - \XuHat}_{\frac{4}{3}}
	+ \wt{\nu}_2 \normInn{\Yu - \YuHat}_{\frac{4}{3}},
	\\
	\normInn{{\FF_2}[\Zu]-{\FF_2}[\ZuHat]}_{\frac{4}{3}} &\leq 
	\frac{\wt{\nu}_3}{\kInn^2} \normInn{\Wu - \WuHat}_{\frac{8}{3}} 
	+ \frac{\wt{\nu}_4}{\kInn^2} \normInn{\Xu - \XuHat}_{\frac{4}{3}}
	+ \frac{\wt{\nu}_5}{\kInn^2} \normInn{\Yu - \YuHat}_{\frac{4}{3}},
	\\
	\normInn{{\FF_3}[\Zu]-{\FF_3}[\ZuHat]}_{\frac{4}{3}} &\leq 
	\frac{\wt{\nu}_3}{\kInn^2} \normInn{\Wu - \WuHat}_{\frac{8}{3}} 
	+ \frac{\wt{\nu}_5}{\kInn^2} \normInn{\Xu - \XuHat}_{\frac{4}{3}}
	+ \frac{\wt{\nu}_4}{\kInn^2} \normInn{\Yu - \YuHat}_{\frac{4}{3}}.
\end{split}
\]
   
\end{proof}


From now on, we emphasize that all the constants $\xi_j, \eta_j, \nu_j$ and $\wt \nu_j$ are in fact functions of $\varrho_1, \varrho_2$, $\kappa$ and $\gamma$. From now on we will write this dependence explicitly. 



To apply Proposition~\ref{proposition:LipschitzStokes} we first need to impose that $\FF : \rectangle \to \rectangle$ is well defined.

\begin{proposition}\label{proposition:Fwelldefined}
Assume 	$\varrho_1 \in (1,60)$, $\varrho_2 \in (1,3)$, $\kappa \geq 3$, $\gamma\in(0,\arctan(\frac{\sqrt3}2))$ and denote
\[
\begin{split}
	g_1(\kappa,\varrho_1,\varrho_2, \gamma) 
	=&
	\paren{\varrho_1 - 1 - \frac{\wt{\nu}_1(\kappa,\varrho_1,\varrho_2)}{\kappa^2}\varrho_1}\alpha_0(\kappa) -
	2 \wt{\nu}_2(\kappa,\varrho_1,\varrho_2) \varrho_2 \beta_0(\kappa),
	\\
	g_2(\kappa,\varrho_1,\varrho_2,\gamma) =&
	\paren{\varrho_2 - 1 - \frac{\wt{\nu}_4(\kappa,\varrho_1,\varrho_2,\gamma) + \wt{\nu}_5(\kappa,\varrho_1,\varrho_2,\gamma)}{\kappa^2}\varrho_2 }\beta_0(\kappa)
 \\
	&- \frac{\wt{\nu}_3(\kappa,\varrho_1,\varrho_2,\gamma)}{\kappa^2}\varrho_1\alpha_0(\kappa).
\end{split}
\]
Then, $\FF: \rectangle \to \rectangle$ is well-defined provided
\begin{equation}\label{def:g1g2condition}
	g_1(\kappa,\varrho_1,\varrho_2,\gamma) \geq 0
	\qqand
	g_2(\kappa,\varrho_1,\varrho_2,\gamma) \geq 0.
\end{equation}
\end{proposition}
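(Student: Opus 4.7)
The plan is to combine the first-iteration bound from Proposition~\ref{proposition:firstIteration} with the Lipschitz estimate from Proposition~\ref{proposition:LipschitzStokes}, applied with $\ZuHat = 0$, and then rearrange the resulting inequality into the precise form of the conditions $g_1 \geq 0$ and $g_2 \geq 0$.

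More precisely, fix $\Zu = (\Wu, \Xu, \Yu) \in \rectangle$, so by definition $\normInn{\Wu}_{8/3} \leq \alpha(\kappa,\varrho_1) = \varrho_1 \alpha_0(\kappa)$ and $\normInn{\Xu}_{4/3}, \normInn{\Yu}_{4/3} \leq \beta(\kappa,\varrho_2) = \varrho_2 \beta_0(\kappa)$. First, I would write $\FF[\Zu] = \FF[0] + (\FF[\Zu] - \FF[0])$ and apply the triangle inequality componentwise. For the first component, Proposition~\ref{proposition:firstIteration} gives $\normInn{\FF_1[0]}_{8/3} \leq \alpha_0(\kappa)$, while Proposition~\ref{proposition:LipschitzStokes} (with $\ZuHat = 0$) gives
\[
\normInn{\FF_1[\Zu] - \FF_1[0]}_{8/3} \leq \frac{\wt\nu_1}{\kappa^2}\normInn{\Wu}_{8/3} + \wt\nu_2 \normInn{\Xu}_{4/3} + \wt\nu_2 \normInn{\Yu}_{4/3} \leq \frac{\wt\nu_1}{\kappa^2}\varrho_1 \alpha_0 + 2\wt\nu_2 \varrho_2 \beta_0.
\]
Adding the two contributions, the condition $\normInn{\FF_1[\Zu]}_{8/3} \leq \varrho_1 \alpha_0$ becomes exactly
\[
\bigl(\varrho_1 - 1 - \tfrac{\wt\nu_1}{\kappa^2}\varrho_1\bigr)\alpha_0 - 2\wt\nu_2 \varrho_2 \beta_0 \geq 0,
\]
which is the hypothesis $g_1(\kappa,\varrho_1,\varrho_2,\gamma) \geq 0$.

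The same procedure works for $\FF_2$ and $\FF_3$: adding $\normInn{\FF_j[0]}_{4/3} \leq \beta_0$ to the Lipschitz bound
\[
\normInn{\FF_j[\Zu] - \FF_j[0]}_{4/3} \leq \frac{\wt\nu_3}{\kappa^2}\varrho_1 \alpha_0 + \frac{\wt\nu_4 + \wt\nu_5}{\kappa^2}\varrho_2 \beta_0,
\]
(where for $\FF_3$ the roles of $\wt\nu_4$ and $\wt\nu_5$ in the $\Xu$/$\Yu$ terms are swapped, but the sum is the same), the condition $\normInn{\FF_j[\Zu]}_{4/3} \leq \varrho_2 \beta_0$ translates precisely into
\[
\bigl(\varrho_2 - 1 - \tfrac{\wt\nu_4 + \wt\nu_5}{\kappa^2}\varrho_2\bigr)\beta_0 - \tfrac{\wt\nu_3}{\kappa^2}\varrho_1 \alpha_0 \geq 0,
\]
i.e., $g_2(\kappa,\varrho_1,\varrho_2,\gamma) \geq 0$. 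Since both hypotheses are assumed, $\FF[\Zu] \in \rectangle$.

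There is no real analytical obstacle here: the proof is purely a matter of bookkeeping, combining the two previous propositions and rearranging. The only mild subtlety is checking that the hypotheses of Proposition~\ref{proposition:LipschitzStokes} are satisfied (namely $\gamma \in (0,\arctan(\sqrt{3}/2))$, $\kappa \geq 3$, $\varrho_1 \in (1,60)$, $\varrho_2 \in (1,3)$), which are precisely the assumptions of the statement we are proving.
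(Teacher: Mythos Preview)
Your proposal is correct and follows essentially the same approach as the paper: both write $\FF[\Zu]=\FF[0]+(\FF[\Zu]-\FF[0])$, apply the triangle inequality componentwise, invoke Proposition~\ref{proposition:firstIteration} for $\FF[0]$ and Proposition~\ref{proposition:LipschitzStokes} with $\ZuHat=0$ for the difference, and then rearrange the resulting inequalities into the conditions $g_1\ge 0$ and $g_2\ge 0$.
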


\begin{proof}
Let $\Zu \in \rectangle$. Then, by Propositions~\ref{proposition:firstIteration}, \ref{proposition:LipschitzStokes} and for $j=2,3$,
\[
\begin{split}
\normInn{\FF_1[\Zu]}_{\frac83} 
&\leq 
\normInn{\FF_1[\Zu]-\FF_1[0]}_{\frac83} +
\normInn{\FF_1[0]}_{\frac83} 
\leq
\paren{1+\frac{\wt{\nu}_1}{\kappa^2}\varrho_1}\alpha_0(\kappa) +
2 \wt{\nu}_2 \varrho_2 \beta_0(\kappa), 
\\
\normInn{\FF_j[\Zu]}_{\frac43} 
&\leq 
\normInn{\FF_j[\Zu]-\FF_j[0]}_{\frac43} +
\normInn{\FF_j[0]}_{\frac43} 
\leq
\frac{\wt{\nu}_3}{\kappa^2}\varrho_1 \alpha_0(\kappa)+
\paren{1+\frac{\wt{\nu}_4 + \wt{\nu}_5}{\kappa^2}\varrho_2}\beta_0(\kappa).
\end{split}
\]
We need to impose that $\FF[\Zu] \in \rectangle$, which leads to the conditions
\[
\normInn{\FF_1[\Zu]}_{\frac83} \leq \varrho_1 \alpha_0(\kappa)
\quad \text{and} \quad
\normInn{\FF_j[\Zu]}_{\frac43} \leq \varrho_2 \beta_0(\kappa).
\]
\end{proof}

\begin{remark}
Let $\varrho_1=40$, $\varrho_2=2$ and $\gamma=0.5$ and consider the functions $g_1$ and $g_2$ considered in Proposition~\ref{proposition:Fwelldefined}. Then,
  
\begin{align*}
g_1(6,40,2,0.5) &\approx -0.0626,
& 
g_2(6,40,2,0.5) &\approx -0.0665,
\\
g_1(7,40,2,0.5) &\approx 0.1613, 
&
g_2(7,40,2,0.5) &\approx 0.1836,
\\
g_1(8,40,2,0.5) &\approx 0.2851, 
&
g_2(8,40,2,0.5) &\approx 0.3226.
\end{align*}
  
%
Notice that we take $\varrho_1 = 20\varrho_2$. This ratio is considered because
\[
\lim_{\kappa \to +\infty} \frac{\beta_0(\kappa)}{\alpha_0(\kappa)} 
= \frac{243}8 \paren{\frac29 + 
\frac{14\sqrt{\pi}\,\Gamma(\frac23)}{81\, \Gamma(\frac76)}}
\approx 
20.3323.
\]
However, other ratios may be considered.
\end{remark}


Now we are ready to prove Theorem \ref{thm:FixedPointQuantitative}.

\begin{proof}[Proof of Theorem \ref{thm:FixedPointQuantitative}]
The first step to prove the theorem is to choose suitable constant so that we can apply Proposition \ref{proposition:Fwelldefined}. 

Indeed, choosing  $\kappa=\kappa^*=6.24$ (see \eqref{def:kappastar}), $\gamma=0.5$, $\varrho_1=20\varrho_2$ and $\varrho_2=1.9$, one has that \eqref{def:g1g2condition} is satisfied. Indeed
\[
g_1(\kappa,\varrho_1,\varrho_2,\gamma) > 0.0371
\qquad \mbox{and}\qquad 
g_2(\kappa,\varrho_1,\varrho_2,\gamma) > 0.0047.
\]
These values are chosen to obtain a small value for $\rho^*(\kappa^*,\gamma)$ (see \eqref{def:rho}). 

Moreover, the constant $L$ in \eqref{def:L} satisfies 
\[
0<L\leq 0.93 < 1
\]
and therefore, by Proposition \ref{proposition:LipschitzStokes}, the operator $\mathcal{F}$ is contractive from $\rectangle$ to itself. Thus, it has a unique fixed point. This completes the proof of Theorem \ref{thm:FixedPointQuantitative}.
\end{proof}

\section{Difference between the solutions of the inner equation}
\label{sec:difference}


To prove Theorem \ref{thm:difference}, the first step is to provide for a good approximation of the solution of the inner equation ``close to infinity''. 
To this end, we define the domains
\[
	\begin{split}
		&\DuInnInf = \claus{ U \in \DuInn \text{ : }
			 \Re U \leq -\eta},
            \quad 
		\DsInnInf = -\DuInnInf,
	\end{split}
\]
where $\DuInn$, $\DuInn$ are the domains introduced in \eqref{def:domainInnner} and $\eta>\kappa$.

We provide an analogue of Theorem \ref{thm:FixedPointQuantitative} in these smaller domains with large $\eta$. This will provide lower values for the constants $b_1$, $b_2$. 

\begin{proposition}\label{prop:fixedpointinfty}
The functions 	$\Zd(U) =(\Wd(U),\Xd(U),\Yd(U))^T$, $\diamond=\unstable,\stable$, introduced in Theorem \ref{theorem:mainAnalytic} are defined in $\mathcal{D}^\diamond_{\kappa^*, \eta^*}$ for 
\[
\kappa^*=6.24,\quad 
\gamma = \frac{1}{2} \quad \text{ and } \quad \eta^*=1000.
\]
In addition, they satisfy that, for $U \in \mathcal{D}^\diamond_{\kappa^*, \eta^*}$,
	\[
		| U^{\frac{8}{3}} \Wd(U)| \leq \tilde b_1, \qquad
		| U^{\frac{4}{3}} \Xd(U) | \leq \tilde b_2, \qquad
		| U^{\frac{4}{3}} \Yd(U) | \leq \tilde b_2,
	\]
where 
\[
\tilde b_1 \leq 0.7,
\qquad
\tilde b_2 \leq 0.71.
\]
\end{proposition}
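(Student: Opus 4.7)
The plan is to re-run the contraction mapping argument from Section~\ref{sec:proof} on the smaller domain $\mathcal{D}^\diamond_{\kappa^*,\eta^*}$, where the lower bound $|U|\geq \eta^*=1000$ (rather than just $|U|\geq\kappa^*=6.24$) dramatically improves all the estimates that are proportional to inverse powers of this lower bound.

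First I would check that the operator $\FF=\GG\circ\RRR$ preserves the space of analytic functions on $\mathcal{D}^\diamond_{\kappa^*,\eta^*}$. For any $U\in\mathcal{D}^\diamond_{\kappa^*,\eta^*}$ and any $s\in(-\infty,0]$, the point $s+U$ satisfies $\Re(s+U)\leq \Re U\leq -\eta^*$, and the same holds for the deformed paths $t e^{\pm i\sigma}+U$ used in $\GG_2$ and $\GG_3$. Hence the integration contours in~\eqref{def:operatorGG} remain inside $\mathcal{D}^\diamond_{\kappa^*,\eta^*}$, and the Banach space of analytic functions $\XcalUTotal$ restricted to this domain (with the analogous weighted sup-norms) is invariant under $\FF$.

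Next I would redo the estimates of Sections~\ref{subsection:firstIteration}--\ref{subsection:operatorFF}, observing that the parameter $\kappa$ enters the formulas for $C_1^0(\kappa)$, $C_{2,3}^0(\kappa)$, $\alpha_0(\kappa)$, $\beta_0(\kappa)$, $\xi_j(\kappa,\cdot)$, $\eta_j(\kappa,\cdot)$, $\nu_j(\kappa,\cdot)$, and $\wt\nu_j(\kappa,\cdot)$ purely through the lower bound $|U|\geq\kappa$. In the smaller domain this lower bound can be replaced by $\eta^*$. Consequently, Proposition~\ref{proposition:firstIteration} yields
\[
\normInn{\FF_1[0]}_{\tfrac{8}{3}}\leq \alpha_0(\eta^*), \qquad \normInn{\FF_j[0]}_{\tfrac{4}{3}}\leq \beta_0(\eta^*), \quad j=2,3,
\]
with $\alpha_0(\eta^*)\to 8/243$ and $\beta_0(\eta^*)\to 2/9+14\sqrt{\pi}\,\Gamma(2/3)/(81\,\Gamma(7/6))$ as $\eta^*\to\infty$. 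For $\eta^*=1000$ these are very close to their asymptotic values. Similarly, the Lipschitz constant $L$ in \eqref{def:L}, computed with $\kappa=\eta^*$, becomes of order $1/(\eta^*)^2$, hence negligible.

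I would then choose small radii $\tilde\varrho_1,\tilde\varrho_2>1$ such that $\tilde\varrho_1\alpha_0(\eta^*)\leq 0.7$ and $\tilde\varrho_2\beta_0(\eta^*)\leq 0.71$; for instance $\tilde\varrho_1\approx 20$ and $\tilde\varrho_2\approx 1.25$ should work comfortably. With these choices, the invariance conditions \eqref{def:g1g2condition} (with $\kappa$ replaced by $\eta^*$) and contractivity $L<1$ are satisfied with a very generous margin, since the corrections scale as $1/(\eta^*)^2=10^{-6}$. Proposition~\ref{proposition:LipschitzStokes} then furnishes a unique fixed point $\widetilde Z^\diamond\in\mathbf{R}_{\kappa^*}(\tilde\varrho_1,\tilde\varrho_2)|_{\mathcal{D}^\diamond_{\kappa^*,\eta^*}}$ of $\FF$ on the smaller domain. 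Since the restriction of the $Z^\diamond$ from Theorem~\ref{thm:FixedPointQuantitative} to $\mathcal{D}^\diamond_{\kappa^*,\eta^*}$ is also a fixed point lying in the (larger) ball of Section~\ref{sec:proof}, and hence in particular in the smaller ball just constructed, uniqueness forces $\widetilde Z^\diamond = Z^\diamond|_{\mathcal{D}^\diamond_{\kappa^*,\eta^*}}$, giving the desired bounds $\tilde b_1\leq 0.7$, $\tilde b_2\leq 0.71$.

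The only real work is bookkeeping: one must verify that the numerical values of $\alpha_0(1000)$, $\beta_0(1000)$ and the induced $\wt\nu_j$'s indeed produce the advertised constants $0.7$ and $0.71$. This is a routine but tedious computation; the main obstacle is simply organizing the many explicit constants from Section~\ref{sec:proof} consistently with the new lower bound $\eta^*$ in place of $\kappa^*$.
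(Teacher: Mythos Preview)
Your approach is exactly the one the paper uses: the paper's entire proof consists of the observation that the constants of Section~\ref{sec:proof} depend on $\kappa$ only through the lower bound $|U|\geq\kappa$, so on $\mathcal{D}^\diamond_{\kappa^*,\eta^*}$ one may replace $\kappa$ by $\eta^*$ verbatim and rerun the fixed point argument.

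There is one logical slip in your identification step. You write that the restriction of $Z^\diamond$ lies in the larger ball ``and hence in particular in the smaller ball just constructed''. This is backwards: the larger ball contains the smaller one, not conversely, so membership in the larger ball does not place $Z^\diamond$ in the smaller one. The fix is immediate. On the restricted domain the Lipschitz constant $L$ (computed with $\eta^*$ in place of $\kappa$) is of order $(\eta^*)^{-2}$, so $\FF$ is contractive already on the \emph{larger} ball $\rectangle$ with the original $\varrho_1,\varrho_2$; uniqueness there identifies the two fixed points. Alternatively, since $Z^\diamond=\FF[Z^\diamond]$, estimate directly
\[
\normInn{\FF_1[Z^\diamond]}_{8/3}\leq \alpha_0(\eta^*)+L\cdot\normInnTotal{Z^\diamond}\leq \alpha_0(\eta^*)+O\!\left((\eta^*)^{-2}\right),
\]
and similarly for the other components, which already gives the bounds $\tilde b_1\leq 0.7$, $\tilde b_2\leq 0.71$ without introducing a second fixed point at all.
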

The proof of this proposition follows exactly the same lines as the proof of Theorem~\ref{thm:FixedPointQuantitative}. Indeed, it is enough to point out that the only difference is that to prove the theorem we strongly used that 
\[
\mathrm{dist}(\DuInn,0)\geq \kappa,
\]
whereas now 
\[
\mathrm{dist}(\DuInnInf,0)\geq \eta.
\]
Taking this fact into account, the proof goes through verbatim replacing $\kappa$ by $\eta$ in the estimates.

To validate the condition (\ref{eq:Delta-Z-at-kappa-star}) from Theorem~\ref{thm:difference} we use the bounds
on $Z^{\mathrm{u}}$ and $Z^{\mathrm{s}}$ from Proposition
\ref{prop:fixedpointinfty} which are valid in the domains $\mathcal{D}%
_{\kappa^{\ast},\eta^{\ast}}^{\mathrm{u}}$ and $\mathcal{D}_{\kappa^{\ast
},\eta^{\ast}}^{\mathrm{s}}$, respectively, and propagate them to the section
$\left\{  \Re U=0\right\}  $ by means of an interval arithmetic
integrator, and establish that the distance between them is non zero. Our tool of choice for this task is the
CAPD\footnote{http://capd.ii.uj.edu.pl} library \cite{MR4283203}.

The CAPD integrator can work with vector fields defined in reals, so our first
step is to rewrite the vector field for the inner equation
(\ref{eq:systemEDOsInner}) in real form. The method for doing so is to separate
the real and imaginary parts of the equations. To achieve this
aim we consider two additional complex variables%
\[
A=\frac{1}{\sqrt{1+\mathcal{J}\left(  U,W,X,Y\right)  }} \qquad\text{and}%
\qquad B=U^{-\frac{1}{3}}.
\]
With these variables we can introduce the following notation%
\[
\mathcal{\tilde{H}}\left(W,X,Y,A,B\right)  
=
W+XY+\mathcal{\tilde{K}}\left(
W,A,B\right) 
\]
and
\begin{equation}
\mathcal{\tilde{K}}\left(W,A,B\right)  =-\frac{3}{4}\frac{1}{B^{2}}%
W^{2}-\frac{1}{3}B^{2}\left(  A-1\right)  .\label{eq:K-tilde}
\end{equation}

\begin{remark}
Notice that, by introducing $A$ and $B$, we have achieved that
\[
\mathcal{H}\left(U,W,X,Y\right)  =\mathcal{\tilde{H}}\left(W,X,Y,A(U,W,X,Y), B(U)\right).
\]
with $\HH$ as given in \eqref{def:hamiltonianInner}. In addition, $\tilde{\HH}$ is polynomial with the only exception of the term involving
$B^{-2}$. This term will not present problems in the
separation of the real parts from the imaginary parts of the vector field in
the coordinates $(U,W,X,Y,A,B)$, since it is easy to separate complex numbers
$z^{-2}$ and $z^{-3}$ into their real and imaginary parts.
\end{remark}


We also write%
  
\begin{align*}
\mathcal{\tilde{J}}\left(W,X,Y,B\right)   &  =\frac{4}{9}W^{2}B^{2}%
-\frac{16}{27}WB^{4}+\frac{16}{81}B^{6}+\frac{4}{9}\left(  X+Y\right)
B^{3}\left(  W-\frac{2}{3}B^{2}\right) \\
&  \quad-\frac{4}{3}i\left(  X-Y\right)  B^{2}-\frac{1}{3}\left(  X^{2}%
+Y^{2}\right)  B^{4}+\frac{10}{9}XYB^{4}.
\end{align*}
  
Note that%
\[
\mathcal{J}\left(  U,W,X,Y\right)  =\mathcal{\tilde{J}}\left(W,X,Y,B(U)\right)
,
\]
where $\JJ$ is given in \eqref{def:hFunction}

To derive the formulae for the vector field with the two additional variables
$A$ and $B$, we first observe that%
\begin{equation} \label{eq:aux-partials}
\begin{split}
 \frac{\partial A}{\partial x}  &=-\frac{1}{2}\frac{1}{\left(  1+\tilde{\JJ} \left( W,X,Y,B\right)  \right)  ^{\frac32}}\frac{\partial\mathcal{\tilde{J}}}{\partial x}\left( W,X,Y,B\right) 
 \\
 &=-\frac12{A^3}\frac{\partial{\tilde{\JJ}}}{\partial
x}\left( W,X,Y,B\right),
\qquad\text{for }x\in\left\{ W,X,Y,B\right\},
\\
\frac{\partial A}{\partial U}  &  =\frac{\partial A}{\partial B}(W,X,Y,B)\frac{\partial
B}{\partial U}(B), 
\qquad \text{where} \qquad
\frac{\partial B}{\partial U} (B)    =-\frac{1}{3}B^{4}.   
\end{split}
\end{equation}
We can now write the ODE (\ref{eq:systemEDOsInner}) in the new variables as
\begin{equation}
\begin{split}
\dot{U}  &  =\phantom{-i}\frac{\partial \HH}{\partial W}=\frac{\partial
\tilde{\HH}}{\partial W}+\frac{\partial\tilde{\HH}}{\partial A}\frac{\partial
A}{\partial W},\nonumber\\
\dot{W}  &  =-\phantom{i}\frac{\partial \HH}{\partial U}=-\left(
\frac{\partial\tilde{\HH}}{\partial B}\frac{\partial B}{\partial U}%
+\frac{\partial\tilde{\HH}}{\partial A}\frac{\partial A}{\partial U}\right)
,\nonumber\\
\dot{X}  &  =\phantom{-}i\frac{\partial \HH}{\partial Y}=\phantom{-}i\left(
\frac{\partial\tilde{\HH}}{\partial Y}+\frac{\partial\tilde{\HH}}{\partial A}%
\frac{\partial A}{\partial Y}\right)  ,\label{eq:ode-AB}\\
\dot{Y}  &  =-i\frac{\partial H}{\partial X}=-i\left(  \frac{\partial
\tilde{\HH}}{\partial X}+\frac{\partial\tilde{\HH}}{\partial A}\frac{\partial
A}{\partial X}\right)  ,\nonumber\\
\dot{A}  &  =\phantom{-i}\frac{\partial A}{\partial U}\dot{U}%
+\frac{\partial A}{\partial W}\dot{W}+\frac{\partial A}{\partial X}\dot{X} +\frac{\partial A}{\partial Y}\dot{Y},\nonumber\\
\dot{B}  &  =\phantom{-i}\frac{\partial B}{\partial U}\dot{U}.\nonumber
\end{split}
\end{equation}

Before we discuss expressing (\ref{eq:ode-AB}) as a vector field on reals, let
us sidetrack to make a useful comment. To do so, let us introduce a function
$S:\mathbb{C}^{6}\rightarrow\mathbb{C}^{6}$ defined as%
\[
S\left(  U,W,X,Y,A,B\right)  :=\left(  -\bar{U},\bar{W},-\bar{X},-\bar{Y}%
,\bar{A},-\bar{B}\right)  .
\]
(In the above, for $z\in\mathbb{C}$ we use $\bar{z}$ for the complex
conjugate.) It turns out that (\ref{eq:ode-AB}) has a time reversing symmetry
with respect to $S$. This will be useful later on, and is expressed in the
following lemma and a resulting corollary.

\begin{lemma}
Let $F:\mathbb{C}^{6}\rightarrow\mathbb{C}^{6}$ stand for the right hand side
of the vector field (\ref{eq:ode-AB}). Then%
\[
S\circ F=-F\circ S. 
\]
\end{lemma}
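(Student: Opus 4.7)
The identity $S\circ F = -F\circ S$ is an algebraic symmetry on $\mathbb{C}^6$ and I would verify it by a direct componentwise computation. The starting point is the observation that $S$ acts by an antiholomorphic reflection that flips the sign of the coordinates $U$, $X$, $Y$, $B$ and preserves the sign of $W$, $A$. Write $\varepsilon_U=\varepsilon_X=\varepsilon_Y=\varepsilon_B=-1$ and $\varepsilon_W=\varepsilon_A=+1$. What I need to check is that, for each coordinate $z\in\{U,W,X,Y,A,B\}$, the relation $\dot z(S(p)) = -\varepsilon_z\,\overline{\dot z(p)}$ holds.

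First I would record how the two ``building block'' functions transform. By inspecting \eqref{eq:K-tilde} and the defining formula for $\tilde{\mathcal{J}}$ term by term, noting that $(-\bar B)^{k}=(-1)^{k}\bar B^{k}$ and that the unique complex coefficient (the $-\tfrac43 i(X-Y)B^{2}$ summand of $\tilde{\mathcal{J}}$) interacts correctly with the sign flips of $X$, $Y$, $B$, one checks
\[
\tilde{\mathcal{H}}\circ S \;=\; \overline{\tilde{\mathcal{H}}},\qquad \tilde{\mathcal{J}}\circ S \;=\; \overline{\tilde{\mathcal{J}}}.
\]
Differentiating these identities, or equivalently applying the same term-by-term argument to $\partial_{z}\tilde{\mathcal{H}}$ and $\partial_{z}\tilde{\mathcal{J}}$, yields the basic ``parity rule'': for $g\in\{\tilde{\mathcal{H}},\tilde{\mathcal{J}}\}$ and any $z\in\{W,X,Y,A,B\}$,
\[
(\partial_{z}g)\circ S \;=\; \varepsilon_{z}\,\overline{\partial_{z}g}.
\]
The same rule then transfers to $A$ and $B$: using \eqref{eq:aux-partials} and $\partial_U B=-\tfrac13 B^{4}$, one obtains $(\partial_{z}A)\circ S=\varepsilon_{z}\overline{\partial_{z}A}$ for $z\in\{W,X,Y,B\}$, $(\partial_U B)\circ S=\overline{\partial_U B}$, and $(\partial_U A)\circ S=(\partial_B A\,\partial_U B)\circ S=\varepsilon_B\,\overline{\partial_U A}=-\overline{\partial_U A}$.

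With these identities in hand, each of the six lines of \eqref{eq:ode-AB} becomes a straightforward check. For instance, $\dot U=\partial_{W}\tilde{\mathcal{H}}+\partial_{A}\tilde{\mathcal{H}}\,\partial_{W}A$, and the parity rule gives $\dot U\circ S=\overline{\dot U}=-\varepsilon_{U}\overline{\dot U}$; for $\dot W=-(\partial_{B}\tilde{\mathcal{H}}\,\partial_{U}B+\partial_{A}\tilde{\mathcal{H}}\,\partial_{U}A)$ each product on the right contains exactly one $\varepsilon=-1$ factor, yielding $\dot W\circ S=-\overline{\dot W}=-\varepsilon_{W}\overline{\dot W}$. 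The two equations for $\dot X$ and $\dot Y$ require tracking the prefactor $\pm i$, which conjugates to $\mp i$ and combines with a single sign flip in the parenthesis to give $\dot X\circ S=\overline{\dot X}$ and $\dot Y\circ S=\overline{\dot Y}$, matching $\varepsilon_{X}=\varepsilon_{Y}=-1$. Finally, the equations for $\dot A$ and $\dot B$ are sums of products where the total number of $\varepsilon=-1$ factors is odd in the $A$-case and even in the $B$-case, producing the required signs.

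The main (and only) obstacle is bookkeeping: one must be careful that the sign flip from $B\mapsto -\bar B$ interacts correctly with all powers of $B$ appearing in $\tilde{\mathcal{J}}$ and its derivatives, and that the imaginary unit $i$ in the $-\tfrac43 i(X-Y)B^{2}$ term and in the equations for $\dot X$ and $\dot Y$ is tracked under complex conjugation. There is no analytic difficulty; once the parity rule for $\tilde{\mathcal{H}}$, $\tilde{\mathcal{J}}$, and the auxiliary partial derivatives is established, each of the six identities follows by inspection.
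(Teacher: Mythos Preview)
Your proposal is correct and follows essentially the same approach as the paper, which simply states that the result ``follows from (lengthy but elementary) direct validation.'' Your version is in fact considerably more organized than the paper's one-line proof: the parity rule $(\partial_z g)\circ S=\varepsilon_z\,\overline{\partial_z g}$ for $g\in\{\tilde{\mathcal H},\tilde{\mathcal J}\}$ is exactly the right device to streamline the bookkeeping, and each of your six componentwise checks goes through as you describe.
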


\begin{proof}
The result follows from (lengthy but elementary) direct validation.
\end{proof}

\begin{corollary}
\label{cor:S-symmetry}The manifolds $Z^{\mathrm{u}}=\left(  W^{\mathrm{u}%
},X^{\mathrm{u}},Y^{\mathrm{u}}\right)  $ and $Z^{\mathrm{s}}=\left(
W^{\mathrm{s}},X^{\mathrm{s}},Y^{\mathrm{s}}\right)  $ are symmetric in the
following sense
\[
W^{\mathrm{s}}\left(  -\bar{U}\right)  =\overline{\Wu\left(
U\right)  },\quad X^{\mathrm{s}}\left(  -\bar{U}\right)  =-\overline
{X^{\mathrm{u}}\left(  U\right)  },\quad Y^{\mathrm{s}}\left(  -\bar
{U}\right)  =-\overline{Y^{\mathrm{u}}\left(  U\right)  }, \quad\text{for }%
U\in\mathcal{D}_{\kappa^{\ast}}^{\mathrm{u}}.
\]

\end{corollary}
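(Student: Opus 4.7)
The plan is to exploit the uniqueness of the functions $Z^\mathrm{u}, Z^\mathrm{s}$ in the Banach spaces used in the fixed point argument (Proposition~\ref{proposition:innerExistence}) together with the time reversing symmetry $S\circ F = -F\circ S$ provided by the preceding lemma. First I would define the candidate function
\[
\tilde Z^\mathrm{s}(V) := \bigl(\,\overline{W^\mathrm{u}(-\bar V)},\; -\overline{X^\mathrm{u}(-\bar V)},\; -\overline{Y^\mathrm{u}(-\bar V)}\,\bigr)^T
\]
for $V\in \mathcal{D}^\mathrm{s}_{\kappa^{\ast}}$, and observe that $-\bar V \in \mathcal{D}^\mathrm{u}_{\kappa^{\ast}}$ since $\mathcal{D}^\mathrm{s}_{\kappa^{\ast}} = -\mathcal{D}^\mathrm{u}_{\kappa^{\ast}}$ and this domain is invariant under the involution $V\mapsto -\bar V$. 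By Schwarz-type reflection, $\tilde Z^\mathrm{s}$ is analytic in $V$, and the weighted decay estimates for $Z^\mathrm{u}$ from Theorem~\ref{thm:FixedPointQuantitative} carry over verbatim to $\tilde Z^\mathrm{s}$ because $|V| = |-\bar V|$, so $\tilde Z^\mathrm{s}$ lies in the analogue of $\mathcal{X}^\mathrm{u}_{\times}$ built over $\mathcal{D}^\mathrm{s}_{\kappa^{\ast}}$.

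Next I would verify that $\tilde Z^\mathrm{s}$ solves the invariance equation~\eqref{eq:invariantEquationInner} on $\mathcal{D}^\mathrm{s}_{\kappa^{\ast}}$. Differentiating by the chain rule,
\[
\partial_V \tilde Z^\mathrm{s}(V) = \bigl(\,-\overline{\partial_U W^\mathrm{u}(-\bar V)},\; \overline{\partial_U X^\mathrm{u}(-\bar V)},\; \overline{\partial_U Y^\mathrm{u}(-\bar V)}\,\bigr)^T,
\]
and substituting the invariance equation for $Z^\mathrm{u}$ at the point $-\bar V$ reduces the claim to an identity on the operator $\mathcal{R}$ under the involution $U\mapsto -\bar U$, $(W,X,Y)\mapsto (\bar W,-\bar X,-\bar Y)$. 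This identity is precisely the slow-variable content of the lemma $S\circ F = -F\circ S$, once one notes that the auxiliary variables transform as $A\mapsto \bar A$ and $B\mapsto -\bar B$ (which is compatible with their defining relations $A^{-2} = 1+\mathcal{J}$ and $B = U^{-1/3}$, using the principal branch on the half-planes at hand). The asymptotic condition $\lim_{\Re V\to +\infty}\tilde Z^\mathrm{s}(V) = 0$ then follows immediately from $\lim_{\Re U\to -\infty}Z^\mathrm{u}(U) = 0$.

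Finally, Proposition~\ref{proposition:innerExistence} applied to the stable side (the construction is identical after reflecting the domain) guarantees that the fixed point equation for $Z^\mathrm{s}$ has a unique solution in the corresponding Banach space. Thus $\tilde Z^\mathrm{s} = Z^\mathrm{s}$, which yields the three identities of the corollary upon setting $U = -\bar V$. The only genuinely delicate point in this plan is the sign/conjugation bookkeeping when lifting $S\circ F = -F\circ S$ from the extended coordinates $(U,W,X,Y,A,B)$ back to the original system $(U,W,X,Y)$; however, because $A$ and $B$ are explicit functions of $(U,W,X,Y)$ which intertwine correctly with the involution on the chosen branches, this is a finite and routine verification rather than a conceptual difficulty.
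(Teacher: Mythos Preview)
Your proposal is correct and is precisely the natural argument the paper leaves implicit: the paper states the corollary without proof, as an immediate consequence of the time-reversing symmetry $S\circ F=-F\circ S$, and your proof fills in exactly those details (reflecting $Z^{\mathrm u}$ via the involution, checking the invariance equation and decay, and invoking the uniqueness from the contractive fixed point in Proposition~\ref{proposition:innerExistence}). One minor phrasing point: the relevant domain fact is that $\mathcal D^{\mathrm u}_{\kappa^*}$ is invariant under complex conjugation, so $V\in\mathcal D^{\mathrm s}_{\kappa^*}\Rightarrow -V\in\mathcal D^{\mathrm u}_{\kappa^*}\Rightarrow -\bar V\in\mathcal D^{\mathrm u}_{\kappa^*}$; this is what you use, though your wording about invariance under $V\mapsto -\bar V$ is slightly imprecise.
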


Let us also observe that from (\ref{eq:aux-partials}) it follows that
the right hand side
of the vector field~\eqref{eq:ode-AB} do not depend on $U$, which means that we can consider only
the five coordinates $W,X,Y,A,B$ obtaining an ODE in $\mathbb{C}^{5}$;
instead of an ODE in $\mathbb{C}^{6}$.

The right hand side of (\ref{eq:ode-AB}) is `almost' polynomial, with the
exception of the terms involving $B^{-2}$ and $B^{-3}$,
 (these terms come from $\mathcal{\tilde{K}}$ and its derivatives, see (\ref{eq:K-tilde})). Since for a complex
number $z=a+ib$ we have explicit formulae for the real and imaginary parts of
$z^{-2}$ and $z^{-3}$, namely%
\begin{equation}\label{eq:z^2}
\begin{split}
z^{-2}  &  =\left(  a+ib\right)  ^{-2}=\left\vert z\right\vert ^{-4}\left(
a^{2}-b^{2}-i2ab\right)  ,\\
z^{-3}  &  =\left(  a+ib\right)  ^{-3}=\left\vert z\right\vert ^{-6}\left(
a\left(  a^{2}-3b^{2}\right)  +ib\left(  b^{2}-3a^{2}\right)  \right)  ,
\end{split}
\end{equation}
we see that a computation of elementary sums and products of complex
numbers, combined with (\ref{eq:z^2}), leads to the separation
of the real and complex parts on the right hand side of (\ref{eq:ode-AB}).

Such computation is laborious. (Especially for the formula for $\dot{A}$.) We have not performed it by hand, but have used Wolfram Mathematica
\cite{Mathematica} to perform these manipulations. We emphasise that this does
not require any sophisticated computations apart from multiplying complex
numbers and grouping the resulting terms into real and complex parts. We treat
the results returned by Wolfram Mathematica as reliable; in fact more reliable
than if they were performed by us by hand. We enclose a short Wolfram
Mathematica script, together with our code, with which we have performed the symbolic derivation of the separation
of (\ref{eq:ode-AB}) into the real and complex parts\footnote{The code for the computer assisted part of the proof is available on the personal web page of MJC.}. We use the resulting vector field for our CAPD interval arithmetic computations.

Our objective is to compute a bound on $\left\Vert \Delta Z\left( -i\rho\right)
\right\Vert$ for some $\rho$ satisfying that $\rho>\rho
_{0}=7.12$. To do so, we first observe that by the $S$-symmetry of the system
(see Corollary \ref{cor:S-symmetry}) we have%
\begin{equation}
\left\Vert \Delta Z\left(  -i\rho\right)  \right\Vert \geq\left\vert
\Re ( \Delta Y\left(  -i\rho\right)  )\right\vert =2\left\vert
\Re ( Y^{\mathrm{u}}\left(  -i\rho\right)  ) \right\vert
.\label{eq:Z-rho-bound}%
\end{equation}
So, it is enough to show that
\begin{equation}
\left\vert \Re (Y^{\mathrm{u}}\left(  -i\rho\right) ) \right\vert
>0,\label{eq:Y-rho-bound}%
\end{equation}
for some $\rho>\rho_0 = 7.12$.

\begin{figure}[ptb]
\begin{center}
\includegraphics[height=5cm]{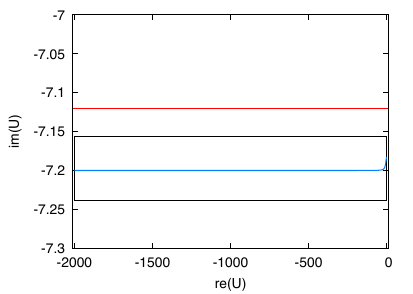}\includegraphics[height=5cm]{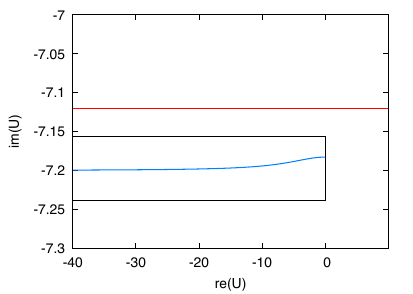}
\end{center}
\caption{The bound on the domain on $U$ within which our trajectory
resides is depicted as the black box. The red line is $\Im U=-\rho_{0}%
$. In blue we have a non-rigorous plot of the trajectory, which is added to
the figure as a point of reference.}\label{fig:crossing-domain}%
\end{figure}

To compute a bound on $Y^{\mathrm{u}}\left(  -i\rho\right)  $ we proceed as
follows. First, we choose an initial point%
\[
U_{0}:=-2000-i \rho_0.
\]
Then, from Proposition \ref{prop:fixedpointinfty} we know that $Z^{\mathrm{u}%
}\left(  U_{0}\right)  $ is inside of the set%
\[
Z^{\mathrm{u}}\left(  U_{0}\right)  \in\mathbf{Z}_{0}:=\left\{  \left(
W,X,Y\right)  :\left\vert W\right\vert \leq\tilde{b}_{1}\left\vert
U_{0}\right\vert ^{-\frac83},\,\left\vert X\right\vert \leq\tilde{b}_{2}\left\vert
U_{0}\right\vert ^{-\frac43},\,\left\vert Y\right\vert \leq\tilde{b}_{2}\left\vert
U_{0}\right\vert ^{-\frac43}\right\}  .
\]
Let us write $\Gamma=\{\Gamma(t)\}$ for the trajectory starting from $(U_0,Z^{\mathrm{u}}(U_0))$. Such trajectory is contained in the unstable manifold.
We have  obtained a bound on $\Gamma$ by
integrating the ODE (\ref{eq:ode-AB}) in interval arithmetic, with the initial condition chosen as the set $\mathbf{Z}_{0}\times\mathbf{A}%
_{0}\times\mathbf{B}_{0}$, where
\[
\mathbf{A}_{0}:=\left\{  \frac{1}{\sqrt{1+\mathcal{J}\left(  U_{0},Z\right)
}}\, \bigg| \, Z\in\mathbf{Z}_{0}\right\}  \qquad\text{and}\qquad\mathbf{B}_{0}:=\left\{
 U_{0}  ^{-1/3}\right\}  .
\]
We make sure that the interval arithmetic bound on $\Gamma$ is always in $\left\{\Im U<\rho_{0}\right\}  $ (see Figure \ref{fig:crossing-domain}) and that it passes through\footnote{In our computer program we have validated that we cross the section $\{\Re U=0\}$ by using the Bolzano type argument, which is visualised in Figure \ref{fig:crossing-CAP}. The CAPD library does have a built in method  for obtaining bounds for a flow reaching a prescribed section, which is transverse to the flow \cite{MR4395996}, but these have failed in the case of our problem. This is why we have obtained the bound for crossing of $\{\Re U=0\}$ without their use. With our present technology the integration to the chosen section is close to the limit of what is achievable for us. This in particular means that the bounds obtained in Proposition \ref{prop:fixedpointinfty} can not easily be extended beyond the chosen surface of section.}
 $\left\{  \Re%
U=0,\Im U\in\lbrack-7.186,-7.18]\right\}  $ (see Figure
\ref{fig:crossing-CAP}, left). We also obtain the following bound (see Figure
\ref{fig:crossing-CAP}, right)
\[
\Re \pi_{Y} ( \Gamma \cap \{\Re U=0\}) 
\in \left[
-0.00075,-0.0005\right],
\]
where $\pi_Y$ denotes the projection into the $Y$ coordinate,
which means that
\[
\Re (Y^{\mathrm{u}}\left(  -i\rho\right) ) \in\left[
-0.00075,-0.0005\right]  ,\quad\text{for }\rho\in\lbrack-7.186,-7.18].
\]
This implies (\ref{eq:Y-rho-bound}) and we thus obtain (\ref{eq:Z-rho-bound}%
). This concludes the proof of Theorem \ref{thm:difference}.

\begin{figure}[ptb]
\begin{center}
\includegraphics[height=5cm]{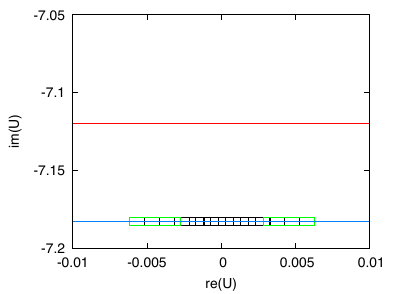}\includegraphics[height=5cm]{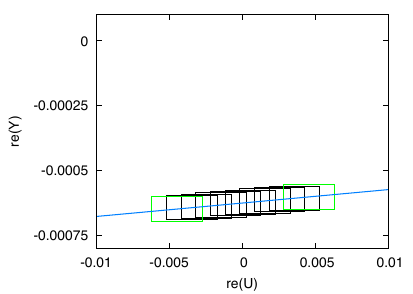}
\end{center}
\caption{A closeup of the crossing of the trajectory through the section
$\{\Re U=0\}$ projected onto $U$ on the left (compare with Figure
\ref{fig:crossing-domain}), and onto coordinates $(\Re U, \Re Y$ on the right. In black we have the interval arithmetic
bounds. In green, we have singled out the bounds on the trajectory for two
disjoint time intervals, to demonstrate that it indeed does cross
$\{\Re U =0\}$. In blue we have a non-rigorous plot of the trajectory,
which is added to the figure as a point of reference.}%
\label{fig:crossing-CAP}%
\end{figure}

The computer assisted computation took a minute on a standard laptop. The vast majority of this time was spent  to integrate in interval arithmetic from $U_0$ to reach the section $\{\Re U=0\}$. (Such integration requires to move along the flow for a time roughly equal to $2000$; equal to the distance between $U_0$ and the section).


\section*{Acknowledgements}

I. Baldom\'a has been supported by the grant PID-2021-
122954NB-100 funded by the Spanish State Research Agency through the programs
MCIN/AEI/10.13039/501100011033 and “ERDF A way of making Europe”.

M. Capi\'nski has been partially supported by the Polish National Science Center (NCN) grants 2019/35/B/ST1/00655 and 2021/41/B/ST1/00407. 

M. Giralt has been supported by the European Union’s Horizon 2020 research and innovation programme under the Marie Sk\l odowska-Curie grant agreement No 101034255.
M. Giralt has also been supported by the research project PRIN 2020XB3EFL ``Hamiltonian and dispersive PDEs".

M. Guardia has been supported by the European Research Council (ERC) under the European Union's Horizon 2020 research and innovation programme (grant agreement No. 757802). M. Guardia is also supported by the Catalan Institution for Research and Advanced Studies via ICREA Academia Prizes 2018 \& 2023. 

This work is supported by the Spanish State Research Agency, through the Severo Ochoa and María de Maeztu Program for Centers and Units of Excellence in R\&D (CEX2020-001084-M).
This work is also part of the grant PID-2021-122954NB-100 funded by MCIN/AEI/10.13039/501100011033 and ``ERDF A way of making Europe''. 
%

\appendix
\section{Explicit expressions for the remainder \texorpdfstring{$\RRR$}{R}}
\label{appendix:formulasRRR}

We devote this appendix to provide formulas for the derivatives of the function $\RRR$ introduced in~\eqref{def:operatorRRRInner}.

We denote $Z=(W,X,Y)$ and assume $U \in \DuInn$ (see \eqref{def:domainInnner}).
The function $\RRR$ is defined as
\begin{equation}\label{eq:operatorRRRStokes}
	\RRR[Z](U) = \paren{\frac{f_1(U,Z)}{1+g(U,Z)},\,
		\frac{\wt{f}_2(U,Z)}{1+g(U,Z)}, \,
		\frac{\wt{f}_3(U,Z)}{1+g(U,Z)} },
\end{equation}
where
\[
	\begin{split}
		\wt{f}_2(U,Z) &= f_2(U,Z) - i{X} g(U,Z), \qquad
		\wt{f}_3(U,Z) = f_3(U,Z) + i{Y} g(U,Z)
	\end{split}
\]
and
\[
	f = \paren{-\partial_U \KK, 
		i \partial_Y \KK, -i\partial_X \KK }^T,
	\qquad
	g = \partial_{W} \KK
\]
and $\KK$ is the Hamiltonian given in~\eqref{def:hamiltonianK} in terms of the function $\JJ$ (see~\eqref{def:hFunction}).

To give formulas for the derivatives of $\RRR$, we first compute the second derivatives of $\JJ$ and $\KK$.

%
%
%

\paragraph{Formulae for $\JJ(U,Z)$.}

The function $\JJ$ given in \eqref{def:hFunction} is defined as
\[
	\begin{split}
		\JJ(U,Z) =& \,  
		\frac{4 W^2}{9 U^{\frac{2}{3}} } 
		-\frac{16 W}{27 U^{\frac{4}{3}}}  
		+\frac{16}{81 U^{2}}
		%
		+\frac{4(X+Y)}{9 U}
		\paren{W -\frac{2}{3 U^{\frac{2}{3}}}} \\[0.5em]
		&- \frac{4i(X-Y)}{3 U^{\frac{2}{3}}}
		-\frac{X^2+Y^2}{3 U^{\frac{4}{3}}}
		+\frac{10 XY}{9 U^{\frac{4}{3}}}. 	
	\end{split}
\]
Then, its first derivatives are given by
\[
\begin{split}
	\partial_U \JJ(U,Z) =& 
	-\frac{8 W^{2}}{27 U^{\frac{5}{3}}} 
	+ \frac{64 W}{81 U^{\frac{7}{3}}}
	-\frac{32}{81 U^{3}}
	-\frac{4 (X+Y) W }{9 U^{2}} \\
	&+\frac{40 (X+Y)}{81 U^{\frac{8}{3}}}
	+\frac{8i (X-Y)}{9 U^{\frac{5}{3}}}
	+\frac{4 (X^{2}+Y^{2})}{9 U^{\frac{7}{3}}}
	-\frac{40 X Y}{27 U^{\frac{7}{3}}}, 
	\\[0.6em]
	\partial_W \JJ(U,Z) =& \,
	\frac{8 W}{9 U^{\frac{2}{3}}}
	-\frac{16}{27 U^{\frac{4}{3}}}
	+\frac{4 (X+Y)}{9 U},
	\\[0.6em]
	\partial_X \JJ(U,Z) =& \,
	\frac{4W}{9U} 
	- \frac{8}{27 U^{\frac53}}
	-\frac{4i}{3 U^{\frac{2}{3}}}
	-\frac{2 X}{3 U^{\frac{4}{3}}}
	+\frac{10 Y}{9 U^{\frac{4}{3}}},
	\\[0.6em]
	\partial_Y \JJ(U,Z) =& \,
	\frac{4W}{9U} 
	- \frac{8}{27 U^{\frac53}}
	+\frac{4i}{3 U^{\frac{2}{3}}}
	-\frac{2 Y}{3 U^{\frac{4}{3}}}
	+\frac{10 X}{9 U^{\frac{4}{3}}}
\end{split}
\]
and the second derivatives are given by
\[
\begin{split}
	\partial_{UW} \JJ(U,Z) =& 
	-\frac{16 W}{27 U^{\frac{5}{3}}}
	+\frac{64}{81 U^{\frac{7}{3}}}
	-\frac{4 (X+Y)}{9 U^{2}},
	\\[0.6em]
	\partial_{UX} \JJ(U,Z) =&
	-\frac{4 W}{9 U^{2}}
	+ \frac{40 }{81 U^{\frac83}}
	+\frac{8 i}{9 U^{\frac{5}{3}}}
	+\frac{8 X}{9 U^{\frac{7}{3}}}-\frac{40 Y}{27 U^{\frac{7}{3}}},
	\\[0.6em]
	\partial_{UY} \JJ(U,Z) =&
	-\frac{4 W}{9 U^{2}}
	+ \frac{40 }{81 U^{\frac83}}
	-\frac{8i}{9 U^{\frac{5}{3}}}
	+\frac{8 Y}{9 U^{\frac{7}{3}}}
	-\frac{40 X}{27 U^{\frac{7}{3}}},
\end{split}
\]
\begin{align*}
	\partial_{W}^2 \JJ(U,Z) &=
	\frac{8}{9 U^{\frac{2}{3}}}, &
	\quad
	\partial_{WX} \JJ(U,Z) &= \frac{4}{9 U}, &
	\quad
	\partial_{WY} \JJ(U,Z) &= \frac{4}{9 U}, 
	\\[0.6em]
	\partial_{X}^2 \JJ(U,Z) &=
	-\frac{2}{3 U^{\frac{4}{3}}}, &
	\quad
	\partial_{XY} \JJ(U,Z) &=
	\frac{10}{9 U^{\frac{4}{3}}}, &
	\partial_{Y}^2 \JJ(U,Z) &=
	-\frac{2}{3 U^{\frac{4}{3}}}. 
\end{align*}

\paragraph{Formulae for $\KK$}

The Hamiltonian $\KK$ introduced in \eqref{def:hamiltonianK} is given by
\[
	\KK(U,Z) = 
	-\frac{3}{4}U^{\frac{2}{3}} W^2 
	- \frac{1}{3 U^{\frac{2}{3}}}
	\paren{\frac{1}{\sqrt{1+\JJ(U,Z)}} - 1 }.
\]
Then, its first derivatives are
\[
\begin{split}
	\partial_U \KK(U,Z) &=
	-\frac{W^2}{2 U^{\frac{1}{3}}}
	+\frac{2}{9 U^{\frac{5}{3}}} 
	\paren{\frac1{\sqrt{1+\JJ}}-1}
	+\frac{1}{6 U^{\frac{2}{3}}} \frac{\partial_U \JJ}
	{(1+\JJ)^{\frac{3}{2}}}, 
	\\
	&=-\frac{W^2}{2 U^{\frac{1}{3}}}
	-\frac{2}{9 U^{\frac{5}{3}}}  \frac{\JJ}{\sqrt{1+\JJ}(1+\sqrt{1+\JJ})}
	+\frac{1}{6 U^{\frac{2}{3}}} \frac{\partial_U \JJ}
	{(1+\JJ)^{\frac{3}{2}}}, 
	\\
	\partial_W \KK(U,Z) &=
	-\frac{3}{2}U^{\frac{2}{3}} W
	+ \frac{1}{6 U^{\frac{2}{3}}}
	\frac{\partial_W \JJ}{(1+\JJ)^{\frac32}},
	\\
	\partial_X \KK(U,Z) &=
	\frac{1}{6 U^{\frac{2}{3}}}
	\frac{\partial_X \JJ}{(1+\JJ)^{\frac32}},
	\\
	\partial_Y \KK(U,Z) &=
	\frac{1}{6 U^{\frac{2}{3}}}
	\frac{\partial_Y \JJ}{(1+\JJ)^{\frac32}}
\end{split}
\]
and its second derivatives are
\[
\begin{split}
	\partial_{UW} \KK(U,Z) &=
	-\frac{W}{U^{\frac{1}{3}}}
	-\frac1{9 U^{\frac{5}{3}}} \frac{\partial_W \JJ}{(1+\JJ)^{\frac32}}
	+\frac{1}{6 U^{\frac{2}{3}}} \frac{\partial_{UW} \JJ}
	{(1+\JJ)^{\frac{3}{2}}}
	-\frac{1}{4 U^{\frac{2}{3}}} \frac{\partial_U \JJ \cdot \partial_W \JJ}
	{(1+\JJ)^{\frac{5}{2}}}, 
	\\
	\partial_{UX} \KK(U,Z) &=
	-\frac1{9 U^{\frac{5}{3}}} \frac{\partial_X \JJ}{(1+\JJ)^{\frac32}}
	+\frac{1}{6 U^{\frac{2}{3}}} \frac{\partial_{UX} \JJ}
	{(1+\JJ)^{\frac{3}{2}}}
	-\frac{1}{4 U^{\frac{2}{3}}} \frac{\partial_U \JJ \cdot \partial_X \JJ}
	{(1+\JJ)^{\frac{5}{2}}},
	\\
	\partial_{UY} \KK(U,Z) &=
	-\frac1{9 U^{\frac{5}{3}}} \frac{\partial_Y \JJ}{(1+\JJ)^{\frac32}}
	+\frac{1}{6 U^{\frac{2}{3}}} \frac{\partial_{UY} \JJ}
	{(1+\JJ)^{\frac{3}{2}}}
	-\frac{1}{4 U^{\frac{2}{3}}} \frac{\partial_U \JJ \cdot \partial_Y \JJ}
	{(1+\JJ)^{\frac{5}{2}}},
	\\
	\partial^2_{W} \KK(U,Z) &=
	-\frac{3}{2}U^{\frac{2}{3}}
	+ \frac{1}{6 U^{\frac{2}{3}}}
	\frac{\partial^2_W \JJ}{(1+\JJ)^{\frac32}}
	- \frac{1}{4 U^{\frac{2}{3}}}
	\frac{(\partial_W \JJ)^2}{(1+\JJ)^{\frac52}},
	\\
	\partial_{WX} \KK(U,Z) &=
	\frac{1}{6 U^{\frac{2}{3}}}
	\frac{\partial_{W X} \JJ}{(1+\JJ)^{\frac32}}
	- \frac{1}{4 U^{\frac{2}{3}}}
	\frac{\partial_W \JJ\cdot \partial_X \JJ}{(1+\JJ)^{\frac52}},
	\\
	\partial_{WY} \KK(U,Z) &=
	\frac{1}{6 U^{\frac{2}{3}}}
	\frac{\partial_{W Y} \JJ}{(1+\JJ)^{\frac32}}
	- \frac{1}{4 U^{\frac{2}{3}}}
	\frac{\partial_W \JJ\cdot \partial_Y \JJ}{(1+\JJ)^{\frac52}},
\end{split}
\]
\[
\begin{split}
	\partial^2_{X} \KK(U,Z) &=
	\frac{1}{6 U^{\frac{2}{3}}}
	\frac{\partial^2_{X} \JJ}{(1+\JJ)^{\frac32}}
	- \frac{1}{4 U^{\frac{2}{3}}}
	\frac{ (\partial_X \JJ)^2}{(1+\JJ)^{\frac52}},
	\\
	\partial_{XY} \KK(U,Z) &=
	\frac{1}{6 U^{\frac{2}{3}}}
	\frac{\partial_{X Y} \JJ}{(1+\JJ)^{\frac32}}
	- \frac{1}{4 U^{\frac{2}{3}}}
	\frac{\partial_X \JJ\cdot \partial_Y \JJ}{(1+\JJ)^{\frac52}},
	\\
	\partial^2_{Y} \KK(U,Z) &=
	\frac{1}{6 U^{\frac{2}{3}}}
	\frac{\partial^2_{Y} \JJ}{(1+\JJ)^{\frac32}}
	- \frac{1}{4 U^{\frac{2}{3}}}
	\frac{ (\partial_Y \JJ)^2}{(1+\JJ)^{\frac52}}.
\end{split}
\]

\paragraph{Formulae for the derivatives of $\RRR$}

By the expression of $\RRR=(\RRR_1,\RRR_2,\RRR_3)$ in~\eqref{eq:operatorRRRStokes}, one obtains
\[
\begin{split}
\partial_W \RRR_1[Z](U) &=
- \frac{\partial_{UW} \KK(1+\partial_W\KK) - \partial_U \KK \cdot \partial^2_W \KK}{(1+\partial_W \KK)^2},
\\
\partial_X \RRR_1[Z](U) &=
- \frac{\partial_{UX} \KK(1+\partial_W\KK) - \partial_U \KK \cdot \partial_{WX} \KK}{(1+\partial_W \KK)^2},
\\
\partial_Y \RRR_1[Z](U) &=
- \frac{\partial_{UY} \KK(1+\partial_W\KK) - \partial_U \KK \cdot \partial_{WY} \KK}{(1+\partial_W \KK)^2}.
\end{split}
\]
Analogously, for $\RRR_2$ and $\RRR_3$
\[
\begin{split}
\partial_W \RRR_2[Z](U) &= 
i\frac{(\partial_{WY} \KK - X\cdot\partial^2_{W}\KK)(1+\partial_W \KK) - (\partial_Y\KK - X\partial_W \KK)\partial^2_{W} \KK }{(1+\partial_W \KK)^2},
\\
\partial_X \RRR_2[Z](U) &= 
i\frac{(\partial_{XY} \KK - \partial_W \KK - X\cdot\partial_{W X}\KK)(1+\partial_W \KK) - (\partial_Y\KK - X\partial_W \KK)\partial_{W X} \KK }{(1+\partial_W \KK)^2},
\\
\partial_Y \RRR_2[Z](U) &= 
i\frac{(\partial^2_{Y} \KK - X\cdot\partial_{W Y}\KK)(1+\partial_W \KK) - (\partial_Y\KK - X\partial_W \KK)\partial_{W Y} \KK }{(1+\partial_W \KK)^2},
\\
	\partial_W \RRR_3[Z](U) &= 
	-i\frac{(\partial_{WX} \KK - Y\cdot\partial^2_{W}\KK)(1+\partial_W \KK) - (\partial_X\KK - Y\partial_W \KK)\partial^2_{W} \KK }{(1+\partial_W \KK)^2},
	\\
	\partial_X \RRR_3[Z](U) &= 
	-i\frac{(\partial^2_{X} \KK - Y\cdot\partial_{W X}\KK)(1+\partial_W \KK) - (\partial_X\KK - Y\partial_W \KK)\partial_{W X} \KK }{(1+\partial_W \KK)^2},
	\\
	\partial_Y \RRR_3[Z](U) &= 
	-i\frac{(\partial_{XY} \KK - \partial_W \KK - Y\cdot\partial_{W Y}\KK )(1+\partial_W \KK) - (\partial_X\KK - Y\partial_W \KK)\partial_{W Y} \KK }{(1+\partial_W \KK)^2}.
\end{split}
\]

\bibliographystyle{alpha}

\addcontentsline{toc}{section}{Bibliography}
\bibliography{biblio}

\end{document}